\documentclass[reqno,a4paper]{amsart}
\usepackage{graphicx}
\usepackage{subcaption}
\usepackage{mathrsfs,amsthm,amsmath,amssymb,amsfonts,enumerate,lipsum,appendix,mathtools}
\makeatletter
\newtheorem*{rep@theorem}{\rep@title}
\newcommand{\newreptheorem}[2]{%
	\newenvironment{rep#1}[1]{%
		\def\rep@title{#2 \ref{##1}}%
		\begin{rep@theorem}}%
		{\end{rep@theorem}}}
\makeatother
\newtheorem{theorem}{Theorem}[section]
\newreptheorem{theorem}{Theorem}
\newtheorem{lemma}{Lemma}[section]

\newtheorem{corollary}[lemma]{Corollary}
\newtheorem{proposition}[lemma]{Proposition}
\newtheorem{remark}[lemma]{Remark}
\newtheorem{definition}[lemma]{Definition}

\newtheorem{observation}{Observation}
\usepackage[nointegrals]{wasysym}
\usepackage[misc]{ifsym}
\usepackage[dvipsnames]{xcolor}
\definecolor{ao}{rgb}{0.0, 0.5, 0.0}
\definecolor{lasallegreen}{rgb}{0.03, 0.47, 0.19}
\usepackage{hyperref}
\hypersetup{
	colorlinks=true,
	linkcolor=blue,
	filecolor=magenta,      
	urlcolor=cyan,
	citecolor=lasallegreen,
}
\usepackage[margin=0.75in]{geometry}

\numberwithin{equation}{section}

\let\oldnorm\norm
\def\norm{\@ifstar{\oldnorm}{\oldnorm*}}
\newcommand{\md}[1]{\left\vert #1 \right\vert}
\newcommand{\dq}{\coloneqq}

\newcommand{\RN}{{\mathbb{R}^d}}

\newcommand{\al} {\alpha}
\newcommand{\pa} {\partial}

\newcommand{\de} {\delta}
\newcommand{\De} {\Delta}
\newcommand{\Dep} {\Delta_p}
\newcommand{\ga} {\gamma}
\newcommand{\Ga} {\Gamma}

\newcommand{\Om} {\Omega}
\newcommand{\la} {\lambda}

\newcommand{\si} {\sigma}
\newcommand{\Si} {\Sigma}

\newcommand{\noi} {\noindent}
\newcommand{\SN}{\mathbb{S}^{d-1}}

\newcommand{\dx}{{\,\rm d}x}

\newcommand{\dS}{{\,\rm dS}}

\newcommand{\T}{\theta}

\newcommand{\cm}{^\mathrm{c}}
\newcommand{\C}{{\mathcal C}}

\newcommand{\F}{{\mathcal F}}

\newcommand{\R}{{\mathbb R}}

\renewcommand{\P}{{\mathcal P}}

\def\S{{\mathcal S}}

\newcommand\Item[1][]{%
	\ifx\relax#1\relax  \item \else \item[#1] \fi
	\abovedisplayskip=0pt\abovedisplayshortskip=0pt~\vspace*{-\baselineskip}}

\newcommand{\eps} {\varepsilon}

\newcommand{\wide}[1]{\widetilde{#1}}

\newcommand{\Int}{\displaystyle \int \limits }
\def\w{{\widetilde w}}

\def\dx{{\,\rm d}x}

\def\sb2{{{\mathcal D}^{1,2}_0(B_1^c)}}

\def\w2r{{{ W}^{2,2}(\R^N)}}

\def\d2{{{\mathcal D}^{2,2}_0(\Om )}}
\def\hd#1{H^1_{{\scriptscriptstyle \Ga _D^{#1}}}(\Om _{#1})}
\def\HD#1{{H^1_{{\scriptstyle \Ga _D}}(\Om _{#1})}}

\def\A{{\mathscr A}}

\def\C{{\mathcal C}}
\def\D{{\mathcal D}}

\def\H{{\mathcal H}}

\def\S{{\mathcal S}}

\def\F{{\mathcal F}}

\def\h1{{H^1(\Om)}}

\def\ws2{{\F_{\frac{N}{2}}}}

\def\c1Loc{{\C_{loc}^1}}
\usepackage[foot]{amsaddr}
\usepackage[hyperpageref]{backref}

\usepackage[pagewise, displaymath, mathlines]{lineno}
\usepackage{cite}
\title[Shape variation via the geometry of eigenfunctions]{A shape variation result via the geometry of eigenfunctions}
\author{T. V. Anoop$^{1*}$ \and K. Ashok Kumar$^{1}$}
\address{$*$ Corresponding author}
\address{$1$ Department of Mathematics, Indian Institute of Technology Madras, Chennai 600 036, India}
\email{anoop@iitm.ac.in, s.r.asoku@gmail.com}
\author{S Kesavan$^{2}$}
\address{$2$ Adjunct Professor, Department of Mathematics, Indian Institute of Technology Madras, Chennai 600036, India}
\email{kesh@imsc.res.in}
\subjclass[2010]{35B06, 35B07, 35B50, 35B51, 35Q93, 49Q10, 58J70}
\keywords{Geometry of the first eigenfunctions, Foliated Schwarz symmetry, Shape derivative, Monotonicity of the first eigenvalue, Zaremba problem}
\begin{document}
	\begin{abstract}
		We discuss some of the geometric properties, such as the foliated Schwarz symmetry, the monotonicity along the axial and the affine-radial directions, of the first eigenfunctions of the Zaremba problem for the Laplace operator on annular domains. These fine geometric properties, together with the shape calculus, help us to prove that the first eigenvalue is strictly decreasing as the inner ball moves towards the boundary of the outer ball.
	\end{abstract}
	\maketitle
	\tableofcontents
	\section{Introduction}\label{intro}
This paper has two objectives. The first objective is {to} study the geometric properties, such as symmetry and monotonicity, of the first eigenfunctions of the Zaremba problem (eigenvalue problem with mixed boundary conditions)
on annular domains. The second objective is {to} study the domain variation of the first eigenvalues
over a family of annular domains. More precisely, for $d\geq 2$ 
we consider the following family of annular domains:
\noindent
\begin{equation}\label{family}\tag{$\mathcal{A}$}
\mbox{for given } 0<R_0<R_1 <\infty ,
\ \
\A _{R_0, R_1}\dq \left\{\Om _s=B_{R_1}(0)\setminus \overline{B_{R_0}(s e_1)}\subset \RN : 0\leq s<R_1-R_0 \right\},
\end{equation}
where $B_r(z)$ is the open ball centered at $z\in \RN$ with radius $r>0$ and $e_1=(1,0,\dots ,0)\in \RN .$ For $\Om _s \in \A _{R_0, R_1} $, we set the boundary $\pa \Om _s =\Ga _D\cup \Ga _N$ with
$\Ga _N= \pa B_{R_1}(0),\, \Ga _D=\pa B_{R_0}(s e_1)$ and we consider the following Zaremba problem for the Laplace operator on the annular domain $\Om _s$:
\begin{equation}\label{eigen}\tag{N-D}
\left .
\begin{aligned}
-\Delta u
&= \tau u \text{ in } \, \Omega _s,\\
u&=0 \text{ on }
\Gamma _{D},\\
\frac{\partial {u}}{\partial {n}}&=0 \text{ on } \Gamma _{N},
\end{aligned}
\right \}
\end{equation}
%
where $\tau $ is a real number and $n$ is the unit outward normal to $\Om _s$. Let $\HD{s}$ be the closure of $\C^\infty _{\Ga _D}(\Om _s) \dq \left\{\phi \in \C^\infty (\Om _s): \mathrm{supp}(\phi)\cap \Ga _D=\emptyset \right\}$ in the Sobolev space $H^1(\Om _s)$, i.e., $\HD{s}=\overline{\C^\infty _{\Ga _D} (\Om _s)}^{\|\cdot \|_{H^1(\Om _s)}}$. It is easy to observe that, the trace is zero on $\Ga _D$ for the functions in $\HD{s}$, therefore

\noindent \begin{equation*}
\HD{s}=\left\{u\in H^1(\Om _s) : \ga _0 (u)|_{\Ga _D}=0\right\} .
\end{equation*}
A real number $\tau $ is called an eigenvalue of~\eqref{eigen} if there exists $u\in \HD{s}\setminus \{0\}$ such that the following weak formulation holds:

\noindent \begin{equation*}
\int _{\Om _s}\nabla u\cdot \nabla \varphi \dx-\tau \int _{\Om _s}  u\varphi \dx =0 \quad \forall \varphi \in \HD{s} ,
\end{equation*}
and the function $u$ is called the eigenfunction corresponding to the eigenvalue $\tau $,
and the pair $(u, \tau)$ is referred as an eigenpair
of~\eqref{eigen}. Since the measure of $\Om _s$ is finite, by a standard application of the spectral theorem for self-adjoint compact operators, the set of all the eigenvalues of \eqref{eigen} is a sequence of positive real numbers tending to infinity, and the corresponding eigenfunctions form an orthonormal basis for $L^2(\Om _s)$. Let 

\noindent
\begin{equation*}
\S \dq \left\{u\in L^2(\Om _s):\int _{\Om _s}u^2\dx=1 \right\} \mbox{ and }  J(\phi ) \dq \int _{\Om _s}|\nabla \phi |^2\dx \mbox{ for }\phi \in H^1(\Om _s),
\end{equation*}
then, by using the Lagrange's multipliers {theorem}, we easily verify that the critical values of~$J$ over $\S \cap \HD{s}$ are precisely the eigenvalues of~\eqref{eigen} and the corresponding critical points of~$J$ are the eigenfunctions. The least eigenvalue $\tau _1(s)\dq \tau _1(\Om _s)$ of~\eqref{eigen} has the following variational characterization:
%

\noindent
\begin{equation*}
\tau _1(s)=\inf \left\{\int _{\Om _s}|\nabla \phi |^2 \dx : \phi \in \S \cap \HD{s} \right\}.
\end{equation*}
\noindent Using the Picone's identity and by a standard application of the maximum principle, it follows that $\tau _1(s)$ is simple (the dimension of the eigenspace is one) and the corresponding eigenfunctions have constant sign in $\Om _s$ (see \cite[Appendix A]{AAK}). Let $\la _1(s)\dq \la _1(\Om _s)$ be the first Dirichlet eigenvalue of~$-\De $ in $\Om _s$. The eigenvalue $\la _1(s)$ has the following variational characterization:

\noindent
\begin{equation*}
\la _1(s)=\inf \left\{\int _{\Om _s}|\nabla \phi |^2 \dx :\phi \in \S \cap H_0^1(\Om _s) \right\}.
\end{equation*}
Since $H^1_0(\Om _s)\subseteq \HD{s} $, we have

\noindent
\begin{equation*}
\tau _1(s)\leq \la _1(s).
\end{equation*}
The inequality above is strict, as the $(d-1)$-dimensional Hausdorff measure of $\Ga _N$ is strictly positive.
Many authors~\cite{Joseph,Kuttler,KuttlerSigillito,NAGAYA1977545,Yee} have studied methods to approximate the fundamental frequencies and corresponding wave-guides of an annular membrane ($d=2$). Also, the dependency of the fundamental frequency on the shape of the membrane has been studied in~\cite{Joseph}. For a review on the eigenvalues of the Laplace operator and the geometrical structure of the first eigenfunctions in $\R ^2$, we refer to~\cite{KuttlerSigillito,Nguyen}.
%
\medskip
\par The qualitative properties of the solutions of the semi-linear equation
\begin{equation}\label{semilinear}
-\De u=f(x,u)
\end{equation}
with Dirichlet boundary condition in symmetric domains are well studied. Gidas-Ni-Nirenberg's theorem~\cite{GidasNiNirenberg} gives the radial symmetry of the {smooth positive} solutions of~\eqref{semilinear} in whole $\RN$ as well as in the radial domains like balls or concentric annular domains. Whereas Lazer-McKenna's an abstract symmetry theorem~\cite[Theorem 1]{LaMcK88} gives the radial symmetry of the solutions of~\eqref{semilinear} with Dirichlet or Neumann condition on the radial domains. The same was studied for larger classes of non-linearities,
in~\cite{NaokiKohtaro, Weth2010, Maris2009, Bartsch-Willem05, BeNi, LiNi, Smets03}.
Perdo~and~Tobis~\cite{Pedro-Tobis} have studied the symmetry and monotonicity of the solutions of~\eqref{semilinear} with Neumann boundary condition on balls with a particular non-linearity,
$f(x,u)=\la _p |u|^{p-1}u+\mu _p$
with appropriate constants $\la _p$ and $\mu _p$. The symmetry and monotonicity of the positive solutions of the Zaremba problem of the semi-linear equation~\eqref{semilinear}, i.e., the mixed boundary problem of~\eqref{semilinear}, are studied only in the spherical cones, see~\cite{Henri-Pacella, Zhu, ChenYao, ChenYaoLi}. To the best of our knowledge, there are no results available on the geometry of the eigenfunctions of the Laplace operator for annular domains.
\par
In this article, we explore how the symmetries of annular domains are inherited by the first eigenfunctions of the Zaremba problem~\eqref{eigen}.
We have plotted the graph of an eigenfunction corresponding to the first eigenvalue $\tau _1(\Om )$  of~\eqref{eigen} for an annular domain $\Om \subset \R ^2$ in Figures~\ref{fig:test1}-\ref{fig:test3}, using \emph{Mathematica 12}. In Figures~\ref{fig:test1}-\ref{fig:test3}, 
$\Om =B_{R_1}(0)\setminus \overline{B_{R_0}(s e_1)}$ for some $s\in [0, R_1-R_0)$ and $u$ is
a non-negative eigenfunction corresponding to~$\tau _1(s)$. We observe that
\begin{enumerate}[(a)]
	\item\label{Obs:1}
	$u$ is symmetric with respect to $e_1$-axis, called as the axial symmetry (see Figure~\ref{fig:test2});
	\item\label{Obs:2}
	$u$ increases along the circles centered at the origin from right to left on both upper and lower parts (this is called the foliated Schwartz symmetry, see Figure~\ref{fig:test2});
	\item\label{Obs:3}
	$u$ increases along the straight lines {emerging} from $s e_1$ (see Figure~\ref{fig:test1});
	\item\label{Obs:4}
	$u$ decreases along the $e_1$-direction in a subdomain (see Figure~\ref{fig:test3});
	\item\label{Obs:5}
	$u$ has the unique maximum (peak) at $-R_1 e_1$, i.e., on the Neumann boundary (see Figure~\ref{fig:test1}).
\end{enumerate}
%
%
\begin{figure}[tb]\label{fig}
	\centering
	\begin{subfigure}{0.49\linewidth}
		\centering
		\includegraphics[width=0.7\linewidth]{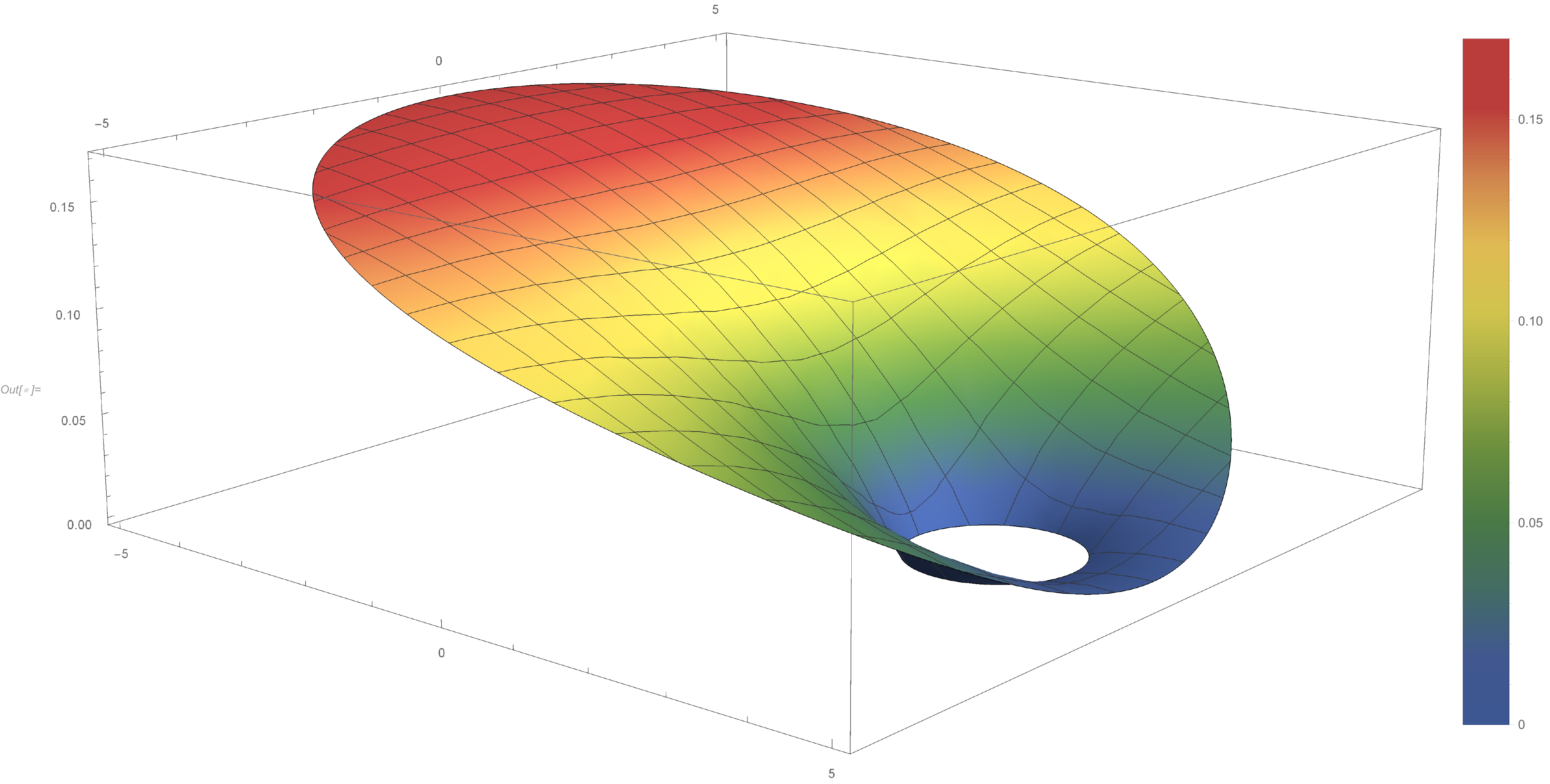}
		\subcaption{}
		\label{fig:test1}
	\end{subfigure}
	\begin{subfigure}{0.49\linewidth}
		\centering
		\includegraphics[width=0.5\linewidth]{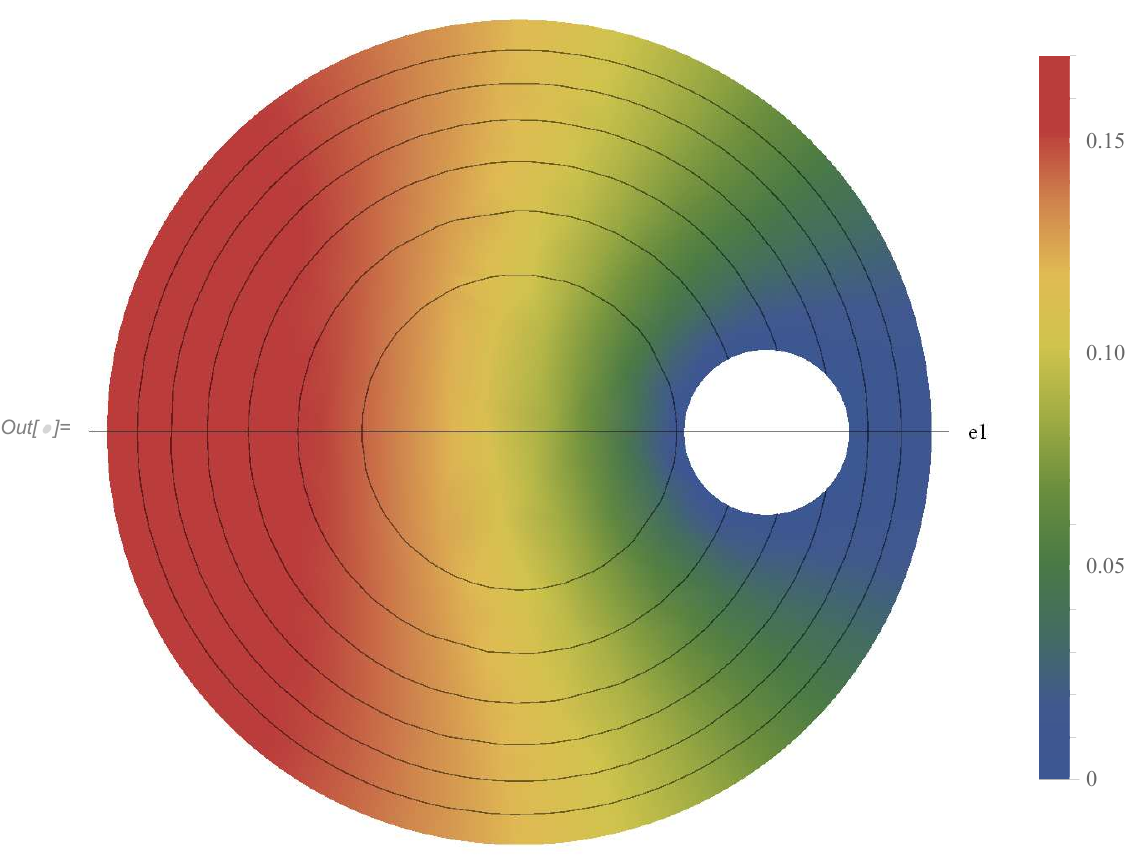}
		\subcaption{}
		\label{fig:test2}
	\end{subfigure}
	\begin{subfigure}{0.49\linewidth}
		\centering
		\includegraphics[width=0.5\linewidth]{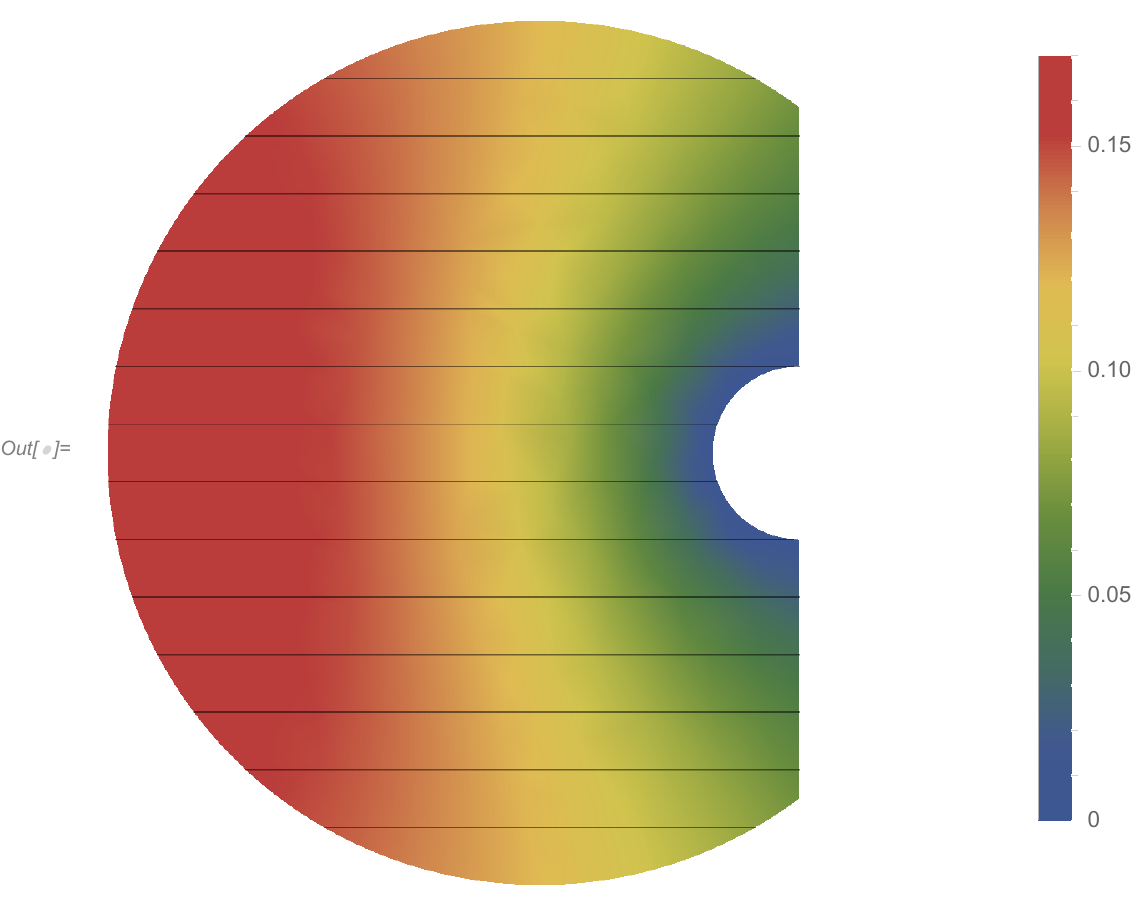}
		\subcaption{}
		\label{fig:test3}
	\end{subfigure}
	%
	\caption{\small (A) A non-negative first eigenfunction in $B_5(0)\setminus \overline{B_1(3 e_1)}\subset \R ^2$. (B) The foliated Schwartz symmetry, (C) Monotonicity along the $e_1$-direction up to $x<3$.}
\end{figure}
%
Our aim is to establish the observations above, using some analytic tools.
The observations~\eqref{Obs:1}-\eqref{Obs:5} are deduced from several theorems which have their own independent interest (Theorem~\ref{thm 0} -- \ref{thm 2}).
These theorems are combined as a unified theorem and stated below:
\begin{theorem}\label{thm uni}
	Let $\Om _s \in \A_{R_1 R_0}$ be an annular domain as given in \eqref{family} with $0<s<R_1-R_0$. Let $u$ be a positive eigenfunction of~\eqref{eigen} in $\Om _s$ corresponding to the first eigenvalue $\tau _1(s)$. Then
	\begin{enumerate}[\rm (a)]
		\item \label{thm uni1}
		$u$ has the foliated Schwarz symmetry in $\Om _s$ with respect to $-\R ^+ e_1$;
		\item \label{thm uni2}
		$u$ is strictly increasing along all the affine-radial directions from $s e_1$ in $\Om _s$, i.e., 
		
		\noindent
		\begin{equation*}
		\nabla u(x)\cdot (x-se_1)>0 \mbox{ for }x\in \Om _s\setminus \{\pm R_1 e_1\};
		\end{equation*}
		\item \label{thm uni3}
		$u$ is strictly decreasing in the $e_1$-direction on the sub-region
		$\big\{x\in \Om _s : x_1<s \big\}$ of $\Om _s$, i.e., 
		
		\noindent
		\begin{equation*}
		\displaystyle \frac{\pa u}{\pa x_1}<0\mbox{ on } \big\{x\in \Om _s : x_1<s \big\}.
		\end{equation*}
		%
	\end{enumerate}
\end{theorem}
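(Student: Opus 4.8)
My plan is to lean on three ingredients throughout: the variational characterization of $u$ as the (up to scaling unique) positive minimizer of $J$ over $\mathcal{S}\cap\HD{s}$; the strict gap $\tau_1(s)<\lambda_1(s)=\lambda_1^{D}(\Omega_s)$ recorded above, which tames the ``wrong sign'' of the zeroth order term in $-\Delta-\tau_1(s)$; and the geometric fact that on every sphere $\partial B_\rho(0)$ the slice $\Omega_s\cap\partial B_\rho(0)$ is a geodesic cap centered at $-\rho e_1$, being the complement of the cap cut out by the hole $B_{R_0}(se_1)$, whose nearest point to the sphere lies on the $+e_1$ axis. For (a) I would argue by foliated Schwarz (cap) symmetrization: let $u^{\ast}$ be equimeasurable with $u$ on each sphere $\partial B_\rho(0)$ and nonincreasing in the geodesic distance to the pole $-\rho e_1$. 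Since $\Omega_s$ is itself cap-symmetric about $-e_1$ and $u$ vanishes on $\Gamma_D$, the zero set of $u^{\ast}$ on each sphere is exactly the hole-cap, so $u^{\ast}\in\mathcal{S}\cap\HD{s}$. The P\'olya--Szeg\H{o} inequality for cap symmetrization gives $J(u^{\ast})\le J(u)=\tau_1(s)$, while the variational characterization forces $J(u^{\ast})\ge\tau_1(s)$; hence $u^{\ast}$ is again a first eigenfunction, and simplicity plus positivity give $u=u^{\ast}$, which is the asserted symmetry. In particular $u$ is axially symmetric and increases toward $-e_1$ on each sphere. (Iterated polarization in every half-space containing $-e_1$ is an equivalent route.)

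Part (b) then follows cleanly from (a). Setting $V\dq\nabla u\cdot(x-se_1)$, the generator-of-dilations-about-$se_1$ computation gives $-\Delta V-\tau_1(s)V=2\tau_1(s)u>0$, so $V$ is a strict supersolution of $-\Delta-\tau_1(s)$ on $\Omega_s$. On $\Gamma_D$, Hopf's lemma gives $\partial u/\partial n<0$, whence $V=-R_0\,\partial u/\partial n>0$; on $\Gamma_N$ the Neumann condition removes the radial part and (a) gives $\partial u/\partial x_1\le 0$, whence $V=-s\,\partial u/\partial x_1\ge 0$. Thus $V\ge 0$ on $\partial\Omega_s$. Since $\lambda_1^{D}(\Omega_s)>\tau_1(s)$, testing the supersolution inequality against $V^{-}\in H_0^1(\Omega_s)$ forces $V^{-}\equiv 0$, i.e. $V\ge 0$, and the strong maximum principle upgrades this to $V>0$ on $\Omega_s\setminus\{\pm R_1 e_1\}$.

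For (c) I would again use that $v\dq\partial u/\partial x_1$ solves the same equation $-\Delta v-\tau_1(s)v=0$. The boundary computations above already give $v\le 0$ on all of $\Gamma_N$ and, on $\Gamma_D\cap\{x_1<s\}$, $v=(\partial u/\partial n)(s-x_1)/R_0<0$. On $D^{-}\dq\{x_1<s\}\cap\Omega_s$ one has $\lambda_1^{D}(D^{-})>\lambda_1^{D}(\Omega_s)>\tau_1(s)$, so the maximum principle for $-\Delta-\tau_1(s)$ is available there; testing against $v^{+}$ propagates the sign to give $v\le 0$ on all of $D^{-}$, and then $v<0$ by the strong maximum principle. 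The entire difficulty of (c) is thereby concentrated in a single codimension-one sign: that $v\le 0$ on the internal interface $\{x_1=s\}\cap\Omega_s$, the hole's symmetry plane.

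To obtain that interface sign I would reflect across $\{x_1=s\}$. The reflection $\sigma_s$ fixes the hole and, by the elementary identity $(2s-x_1)^2-x_1^2=-4s(x_1-s)<0$ for $x_1>s$, carries $D^{+}\dq\{x_1>s\}\cap\Omega_s$ into $\Omega_s$; the comparison function $\phi\dq u-u\circ\sigma_s$ solves $-\Delta\phi-\tau_1(s)\phi=0$ on $D^{+}$, vanishes on $\{x_1=s\}$ and (by symmetry of the hole) on $\Gamma_D\cap\{x_1>s\}$. If one knows $\phi\le 0$ on $\Gamma_N\cap\{x_1>s\}$, then the same gap-maximum principle gives $\phi\le 0$ on $D^{+}$, whence $\partial u/\partial x_1\le 0$ on $\{x_1=s\}$, closing the argument. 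The main obstacle is precisely this Neumann portion $\Gamma_N\cap\{x_1>s\}$: there $\phi$ carries no a priori sign, since $x$ and $\sigma_s x$ are equidistant from $se_1$ but at different distances from the origin, so neither (a) nor (b) signs it directly. Overcoming this must be done by a Serrin--Hopf boundary-point analysis along $\Gamma_N$ (equivalently, by running a moving plane in the $e_1$-direction, which is elementary for $\lambda\le 0$ but whose reflected caps leave $B_{R_1}(0)$ through $\Gamma_N$ once $\lambda\in(0,s)$), fed by the radial monotonicity (b) and the axial sign of $\partial u/\partial x_1$ on $\Gamma_N$ from (a). Making this boundary interplay rigorous—rather than the interior maximum principle, which the eigenvalue gap renders routine—is where the real work lies.
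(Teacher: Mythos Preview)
Your arguments for (a) and (b) are correct and match the paper's (polarization vs.\ direct cap symmetrization for (a), and an identical supersolution argument with the gap $\tau_1(s)<\la_1(s)$ for (b)). For (c), however, there is a genuine gap. You correctly reduce to the interface sign $\tfrac{\pa u}{\pa x_1}\le 0$ on $\{x_1=s\}\cap\Om_s$ and propose obtaining it by reflection across $\{x_1=s\}$, but as you yourself note, $\phi=u-u\circ\si_s$ carries no a priori sign on $\Ga_N\cap\{x_1>s\}$: the points $x$ and $\si_s x$ lie on the same sphere about $se_1$ (so (b) is silent) and on different spheres about $0$ (so (a) is silent). Your proposed ``Serrin--Hopf'' rescue does not close this; computing the normal derivative of $u\circ\si_s$ on $\Ga_N$ gives $\frac{1}{R_1}\big[\nabla u(\si_s x)\cdot(\si_s x-se_1)-s\,\tfrac{\pa u}{\pa x_1}(\si_s x)\big]$, whose sign requires (b) \emph{and} $\tfrac{\pa u}{\pa x_1}(\si_s x)<0$ with $\si_s x\in\Si_s$ --- which is (c) itself. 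Indeed this is precisely the computation the paper performs later (Lemma~\ref{lem:Neumann}) when proving the monotonicity of $\tau_1$, feeding (b) and (c) back in; invoking it here is circular.

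The paper avoids reflecting across $\{x_1=s\}$ altogether. The idea you are missing is that the strict form of (a) already delivers the interface sign on a \emph{different} hyperplane: on $\{x_1=0\}$, through the centre of the outer ball, the direction $e_1$ is tangential to every sphere $\pa B_\rho(0)$, so the strict angular monotonicity (Proposition~\ref{propo:fss}, Lemma~\ref{coro:tan2}) gives $\tfrac{\pa u}{\pa x_1}<0$ directly on $\overline{\Om}_s\cap\{x_1=0\}$. Combined with the signs you already have on $\Ga_N$ and on $\Ga_D\cap\{x_1<s\}$, the Dirichlet-gap maximum principle (Proposition~\ref{aux:smp}) yields $\tfrac{\pa u}{\pa x_1}<0$ on $\overline{\Si}_0\setminus\{-R_1e_1\}$; a continuity--compactness continuation in $\al$ from $0$ to $s$ (no reflection needed, since the moving interface $\pa H_\al\cap\overline{\Om}_s$ is compact and the boundary signs on $\Ga_N$, $\Ga_D\cap\{x_1<s\}$ are already strict) then finishes (c). The right starting hyperplane is $\{x_1=0\}$, not $\{x_1=s\}$, precisely because it is the one on which the foliated symmetry controls $\tfrac{\pa u}{\pa x_1}$ directly.
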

%
\noindent {We prove this theorem using} some variants of the maximum principle and the comparison principles which are applicable to the problems with mixed boundary conditions.
%
\medskip
\par Our next objective is to study the behavior of $\tau _1 (s)$ for $s\in [0,R_1-R_0)$.
Hersch~\cite{Hersch} proved that $\tau _1$ attains its maximum only at $s=0$, when $d=2$. This result has been extended to the higher dimensions $(d\geq 2)$ by Anoop~and~Ashok~\cite{AAK}, using the method of interior parallels. However, this method fails to give any insight on 
{the ordering} of $\tau_1(s_1)$ and $\tau_1(s_2)$ for any two  non-zero $s_1,s_2\in [0,R_1-R_0).$
%
\medskip
\par For the Dirichlet problem, the behavior of $\la _1(s)$ for $s\in [0,R_1-R_0)$ {is} first considered by Ramm~and~Shivakumar~\cite{Ramm-Shivakumar}. They have conjectured that \emph{`for $d=2$, the function $\la _1(\cdot )$ is strictly decreasing on $[0,R_1-R_0)$'}, and
supported this conjuncture by a numerical evidence showing that 
$\la _1'(s)<0 \mbox{ for } 0<s<R_1-R_0;$ 
later they gave an analytic proof attributed to Ashbaugh in an arXiv paper (arxiv:math-ph/9911040).
An analytic proof for this is given by Harrell~et~al.~\cite{Harrell}, and Kesavan~\cite{Kesavan} independently for $d\geq 2$. Further, Anisa~et~al.~\cite{anisa2} have partially extended this result to a non-linear operator the $p$-Laplacian, defined as 
$-\Dep u\dq -\mathrm{div}(|\nabla u|^{p-2}\nabla u)$ 
for $1< p<\infty $,
by showing that $\la _1'(s)\leq 0$. Anoop~et~al.~\cite{Anoop18}, have proved the strict monotonicity of 
$\la _1(s)$ for the $p$-Laplacian, by showing that $\la _1'(s)<0\mbox{ for }s>0$.
The main ingredient of all the proofs is 
%
%
the representation of 
$\la _1'(s)$ as the following integral:

\noindent
\begin{equation*}
\la _1'(s)=- \Int _{x\in \pa B_{R_0}(s e_1)} \left| \frac{\partial u}{\partial n}(x) \right|^2 n_1(x) \dS ,
\end{equation*}
where $u$ is the positive eigenfunction corresponding to $\la _1(s)$ with $\|u\|_2=1$ and 
$n_1$ is the first component of the inward unit normal $n=(n_1,n_2,\dots ,n_N)$ to $B_{R_0}(s e_1)$.
This was derived using the Hadamard perturbation formula {to the shape functional $\la _1(s)$}.
%
\par

\medskip
For the first eigenvalue $\tau _1(s)$ of~\eqref{eigen}, we derive the same representation (see Section 5) for $\tau _1'(s)$ as below:

\noindent
\begin{equation*}
\tau _1'(s)= - \Int _{x\in \pa B_{R_0}(s e_1)}
\left| \frac{\partial {v}}{\partial n}(x) \right|^2 n_1(x) \dS,
\end{equation*}
where ${v}$ is the positive eigenfunction of~\eqref{eigen} corresponding to $\tau _1(s)$ with $\|{v}\|_2=1$ and
$n_1$ is the first component of the inward unit normal $n$ to $B_{R_0}(s e_1)$.
In~\cite{Harrell, Kesavan}, the authors determined the sign of $\la _1'(s)$ by ordering the values of 
$\frac{\pa u}{\pa n}(x)$ and $\frac{\pa u}{\pa n}(x^*)$ for $x\in \pa B_{R_0}(s e_1)$, where $x^*$ is the reflection of $x$ with respect to the affine hyperplane $x_1=s$. This ordering was obtained via maximum principles and comparison principles, using the ideas similar to those in the works of Serrin~\cite{Serrin}, Gidas~et~al.~\cite{GidasNiNirenberg} and Berestycki~and~Nirenberg~\cite{Berestycki}. In order to apply the comparison principle, the authors make use of the ordering $u(x)<u(x^*)$ for $x\in \pa B_{R_1}(0)$ {with $x_1>s$}. This ordering is readily available from the fact that $u$ is strictly positive in $\Om _s$. We also have a variant of the maximum and the comparison principle applicable for the Zaremba problem for the Laplce operator (see Section~\ref{Max}). Now, to determine the sign of $\tau '_1 (s)$, if we follow the arguments of~\cite{Harrell,Kesavan}, we immediately run into a trouble as the ordering of ${v(x)}$ and ${v(x^*)}$ is missing. Nevertheless, one can apply the comparison principle, if there is an ordering between ${\frac{\pa v}{\pa n}(x)}$ and ${\frac{\pa v}{\pa n}(x^*)}$. Since ${\frac{\pa v}{\pa n}}$ is zero on the outer boundary $\pa B_{R_1}(0)$ (from the Neumann condition), it is enough to determine the sign of ${\frac{\pa v}{\pa n}(x^*)}$ for $x\in \pa B_{R_1}(0)$ {with $x_1>s$}. The finer geometric properties (Theorem~\ref{thm uni}) ensure that ${\frac{\pa v}{\pa n}(x^*)}$ is positive {for $x\in \pa B_{R_1}(0)$ with $x_1>s$}. Indeed, this is the primary motivation for us to study the geometry of the eigenfunctions of the Zaremba problem in the first place.
Next, we state the monotonicity result for $\tau _1(s)$ for $s\in [0,R_1-R_0)$.
\begin{theorem}\label{thm}
	Let $\Om _s \in \A_{R_1 R_0}$ be annular domain as given in \eqref{family} with $0\leq s < R_1-R_0$. Let $\tau _1(s)$ be the first eigenvalue of \eqref{eigen} in 
	$\Om _s 
	$. 
	Then
	
	\noindent
	\begin{equation*}
	\tau _1'(0)=0 \mbox{ and } \tau _1'(s)<0 \mbox{ for } 0<s<R_1-R_0. 
	\end{equation*}
	In other words, $\tau _1(\cdot )$ is strictly decreasing on $[0,R_1-R_0).$
\end{theorem}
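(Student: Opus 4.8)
The plan is to start from the Hadamard-type representation derived in Section~5,
\[
\tau _1'(s)= - \int_{\pa B_{R_0}(s e_1)} \left| \frac{\pa v}{\pa n}(x) \right|^2 n_1(x) \,\dS ,
\]
where $n$ is the inward unit normal to $B_{R_0}(s e_1)$ and $v>0$ is the $L^2$-normalized first eigenfunction of~\eqref{eigen}. Throughout, write $x^\ast\dq(2s-x_1,x_2,\dots,x_d)$ for the reflection of $x$ across the affine hyperplane $\{x_1=s\}$, which fixes the center $s e_1$ and shrinks the outer ball. For $s=0$ the domain $\Om_0$ is invariant under this reflection, and since $\tau_1(0)$ is simple, $v$ inherits the symmetry; then $|\pa_n v(x^\ast)|=|\pa_n v(x)|$ while $n_1(x^\ast)=-n_1(x)$, so pairing $x$ with $x^\ast$ makes the integral vanish and gives $\tau_1'(0)=0$. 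The whole content of the theorem is therefore the strict sign for $s>0$, which I would obtain by showing $|\pa_n v(x^\ast)|>|\pa_n v(x)|$ for every $x\in\Ga_D$ with $x_1>s$: on the half $\{x_1>s\}$ one has $n_1(x)<0$, so after the same reflection-pairing the representation becomes $\tau_1'(s)=\int_{\Ga_D\cap\{x_1>s\}}\big(|\pa_n v(x^\ast)|^2-|\pa_n v(x)|^2\big)\,n_1(x)\,\dS<0$.

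The normal-derivative ordering I would establish by a moving-plane (reflection) comparison on the cap $\Sigma\dq\{x\in\Om_s:x_1>s\}$. One checks that the reflected cap $\Sigma^\ast$ lies in $\{x_1<s\}\cap\Om_s$, so $v^\ast(x)\dq v(x^\ast)$ is well defined and positive on $\overline\Sigma$, and $w\dq v^\ast-v$ solves $-\De w=\tau_1 w$ in $\Sigma$. On the flat face $\{x_1=s\}\cap\Om_s$ and on $\Ga_D\cap\overline\Sigma$ both $v$ and $v^\ast$ vanish, so $w=0$ there. The only difficulty is the Neumann face $\Ga_N\cap\overline\Sigma$: in the Dirichlet analogue of \cite{Harrell,Kesavan} one has $w\ge0$ on the outer boundary for free because $u$ vanishes there, but for~\eqref{eigen} the values $v(x),v(x^\ast)$ are uncomparable and this ordering is lost. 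Since $\pa_n v=0$ on $\Ga_N$ (the Neumann condition), I would instead control the conormal derivative $\pa_n w=\pa_n v^\ast$ on that face.

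This is where Theorem~\ref{thm uni} enters. A direct computation with the reflection gives, for $x\in\Ga_N$ with $x_1>s$,
\[
\pa_n v^\ast(x)=\frac{1}{R_1}\Big(\nabla v(x^\ast)\cdot(x^\ast-s e_1)-s\,\frac{\pa v}{\pa x_1}(x^\ast)\Big).
\]
Because $x_1>s$ forces $x^\ast_1<s$ and $x^\ast\in\Om_s\setminus\{\pm R_1 e_1\}$, part~(b) of Theorem~\ref{thm uni} yields $\nabla v(x^\ast)\cdot(x^\ast-s e_1)>0$ and part~(c) yields $\tfrac{\pa v}{\pa x_1}(x^\ast)<0$; hence $\pa_n w=\pa_n v^\ast>0$ on $\Ga_N\cap\Sigma$. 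With $w=0$ on the Dirichlet faces and $\pa_n w\ge0$ on the Neumann face, the comparison principle for the Zaremba operator $-\De-\tau_1$ from Section~\ref{Max} gives $w\ge0$, and the strong maximum principle upgrades this to $w>0$ in $\Sigma$. Applying Hopf's lemma on $\Ga_D\cap\Sigma$, where $w=0$, gives $\pa_n w<0$ there, which reflected back reads exactly $|\pa_n v(x^\ast)|>|\pa_n v(x)|$; feeding this into the symmetrized integral of the first paragraph closes the proof.

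I expect the main obstacle to be justifying the comparison principle itself: the zeroth-order term $-\tau_1 w$ has the ``wrong'' sign, so the bare maximum principle for $-\De-\tau_1$ fails, and one must exploit the positivity of the ground state $v$ --- substituting $w=\sigma v$ to obtain the divergence-form problem $-\mathrm{div}(v^2\nabla\sigma)=0$ with no zeroth-order term --- while handling the degeneration of the weight $v^2$ on $\Ga_D$ and the mixed boundary data; this is precisely the variant prepared in Section~\ref{Max}. By contrast, the genuinely new ingredient compared with the Dirichlet case is the \emph{sign of the conormal derivative on the Neumann face}, supplied entirely by the fine geometry of Theorem~\ref{thm uni}, whereas verifying $\Sigma^\ast\subset\Om_s$ and treating the edges where the faces meet are routine.
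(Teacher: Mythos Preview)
Your proof is correct and follows essentially the same route as the paper's: reflect across $\{x_1=s\}$, rewrite the Hadamard integral over $\Ga_D\cap\{x_1>s\}$, use Theorem~\ref{thm uni}(b)(c) to force $\pa_n w>0$ on the Neumann face, apply the mixed comparison principle on the cap, and finish with Hopf's lemma on $\Ga_D$. One small divergence worth noting: the comparison principle in Section~\ref{Max} (Proposition~\ref{aux:wcp}) is proved by a direct variational argument exploiting $|\Om_s\setminus\Sigma|>0$, not by the ground-state substitution $w=\sigma v$ you anticipate --- so the degeneration of $v^2$ on $\Ga_D$ that worries you never actually arises --- but since you correctly defer to that section this does not affect your argument.
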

%
%
\par We would like to mention that the arguments and the results of the present paper can be easily extended, \emph{mutatis mutandis}, to study the torsion problem. For an annular domain $\Om _s \in \A _{R_0 R_1}$, the \emph{torsional energy} $E(s)\dq E(\Om _s)$ of $\Om _s$ is defined as

\noindent
\begin{equation*}
E(s)\dq \min\limits_{u\in H^1_{\Ga _D}(\Om _s)}\frac{1}{2}\int _{\Om _s} |\nabla u|^2\dx -\int _{\Om _s} u\dx.
\end{equation*}
The unique minimizer of $E(\Om _s)$ is called the \emph{torsion function} of $\Om _s$ and is denoted by $v_s$. The function $v_s$ is the unique weak solution of the following boundary value problem:
\begin{equation}\label{torsion}\tag{$\mathcal{T}$}
\left .
\begin{aligned} 
-\De v_s& = 1  \mbox{ in }  \Om _s,\\
v_s&= 0 \mbox{ on } \Ga _D,\\
\frac{\pa {v_s}}{\pa {n}} &= 0 \mbox{ on } \Ga _N.
\end{aligned}
\right\}
\end{equation}
The \emph{torsional rigidity} $T(s)\dq T(\Om _s)$ of~$\Om _s$ is defined by the following variational characterization:
\begin{equation}\label{tor:char1}
T(s) \dq \max \left\{\left(\int _{\Om _s} u\dx \right)^2 : u\in H^1_{\Ga _D}(\Om _s) \mbox{ with } \int _{\Om _s}|\nabla u|^2\dx=1 \right\}.
\end{equation}
We easily see that the torsional rigidity $T(s)$ can be equivalently defined as an unconstrained problem, by observing that $T(s)=-2E(s),$ i.e.,

\noindent
\begin{equation*}
T(s)=\max \limits_{u\in H^1_{\Ga _D}(\Om _s)} 2\int _{\Om _s}u\dx - \int _{\Om _s}|\nabla u|^2\dx.
\end{equation*}
Hence, the torsional rigidity $T(s)$ can be given as

\noindent
\begin{equation*}
T(s)=\int _{\Om _s}|\nabla v_s|^2\dx =\int _{\Om _s} v_s\dx.
\end{equation*}
The torsion function of the annular domain $\Om _s$ has similar geometric properties as the first eigenfunctions. We state the geometric properties of the torsion function and the monotonicity of the torsional rigidity in the following theorem, without proof.
%
\begin{theorem}\label{tor-thm uni}
	Let $\Om _s$ be an annular domain as given in~\eqref{family} with $0< s<R_1-R_0$, let $v_{s}$ be the torsion function of~$\Om _s$,
	and let $T(s)$ be the torsional rigidity of $\Om _s$. Then, the following holds:
	\begin{enumerate}[\rm (i)]
		\item \label{tor-thm uni1}
		$v_s$ has the foliated Schwarz symmetry in $\Om _s$ with respect to $-\R ^+ e_1$, where $\R^+ =[0,\infty)$,
		\item \label{tor-thm uni2}
		$v_s$ is strictly increasing along all the affine-radial directions from $s e_1$ in $\Om _s$, i.e.,
		
		\noindent
		\begin{equation*}
		\nabla v_s(x)\cdot (x-se_1)>0 \mbox{ for }x\in \Om _s\setminus \{\pm R_1 e_1\},
		\end{equation*}
		\item \label{tor-thm uni3}
		$v_s$ is strictly decreasing in the $e_1$-direction on the sub-region
		$\big\{x\in \Om _s : x_1<s \big\}$ of $\Om _s$, i.e., 
		\[\displaystyle \frac{\pa v_s}{\pa x_1}<0\mbox{ on } \big\{x\in \Om _s : x_1<s \big\},\]
		\item \label{tor-thm uni4}
		\noindent
		$T'(s)=\displaystyle \int _{\Ga _D} \left|\frac{\pa v_s}{\pa n}(x)\right|^2 n_1(x) \dS ,$
		\item $T'(0)=0 \mbox{ and } T'(s)>0\ \mbox{ for } 0<s<R_1-R_0.$ In other words, $T(\cdot )$ is strictly increasing on $[0,R_1-R_0).$
		%
		%
	\end{enumerate}
\end{theorem}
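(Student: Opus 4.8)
The plan is to transplant, \emph{mutatis mutandis}, the arguments developed for the first eigenfunction of~\eqref{eigen} to the torsion function $v_s$ of~\eqref{torsion}: the function $v_s$ plays the exact role of the positive first eigenfunction. First I would record that $v_s>0$ in $\Om_s$, by applying the Zaremba variant of the maximum principle (Section~\ref{Max}) to $-\De v_s=1>0$ with $v_s=0$ on $\Ga_D$ and $\pa v_s/\pa n=0$ on $\Ga_N$. The decisive structural point is that the source term $1$ is invariant under every reflection, every rotation about the $e_1$-axis, and every translation; hence each symmetrization and comparison argument behind Theorem~\ref{thm uni} transfers without change. In fact the torsion case is \emph{simpler}: if $x\mapsto x^{*}$ denotes reflection across a hyperplane $H$ and one forms $w=v_s(x^{*})-v_s(x)$, the two source terms cancel and $w$ is \emph{harmonic} ($-\De w=1-1=0$), instead of solving $-\De w=\tau_1 w$. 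With this observation, parts (i)--(iii) follow by repeating verbatim the proofs of parts (a)--(c) of Theorem~\ref{thm uni}: the foliated Schwarz symmetry (i) from reflection/polarization in the hyperplanes containing $\R e_1$ together with the uniqueness of $v_s$, and the affine-radial and $e_1$-directional monotonicities (ii)--(iii) from the moving-plane and rotating-plane methods based at $se_1$, with Hopf's lemma supplying the strict inequalities.

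For part (iv) I would repeat the Hadamard perturbation computation that yields $\tau_1'(s)$ in Section~5. Only the inner boundary $\Ga_D$ moves, with velocity $e_1$, while $\Ga_N$ stays fixed. Writing $T(s)=\int_{\Om_s}|\nabla v_s|^2\dx=\int_{\Om_s}v_s\dx$ and using that the shape derivative of $v_s$ is harmonic (the source is constant), Green's identity collapses the bulk terms and leaves only the boundary integral over the moving part,
\begin{equation*}
T'(s)=\int_{\Ga_D}\left|\frac{\pa v_s}{\pa n}(x)\right|^2\,(e_1\cdot n)\,\dS=\int_{\Ga_D}\left|\frac{\pa v_s}{\pa n}(x)\right|^2 n_1(x)\,\dS,
\end{equation*}
where $n$ is the inward unit normal to $B_{R_0}(se_1)$ (the outward normal to $\Om_s$ along $\Ga_D$). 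The sign is $+$, as opposed to the $-$ appearing for the eigenvalue, because $T$ is characterised by the \emph{maximisation}~\eqref{tor:char1}.

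For part (v), the value $T'(0)=0$ is immediate: $\Om_0$ is invariant under $x_1\mapsto-x_1$, so by uniqueness $v_0$ is even in $x_1$; thus $|\pa v_0/\pa n|^2$ is even and $n_1$ is odd along $\Ga_D=\pa B_{R_0}(0)$, and the integrand in (iv) is odd and integrates to zero. For $0<s<R_1-R_0$, let $x^{*}$ be the reflection of $x$ across $\{x_1=s\}$, the hyperplane through the centre $se_1$ (which fixes the inner ball). Since $n_1(x^{*})=-n_1(x)$ and $\dS$ is reflection-invariant, pairing $x$ with $x^{*}$ gives
\begin{equation*}
T'(s)=\int_{\Ga_D\cap\{x_1<s\}} n_1(x)\left(\left|\frac{\pa v_s}{\pa n}(x)\right|^2-\left|\frac{\pa v_s}{\pa n}(x^{*})\right|^2\right)\dS,
\end{equation*}
and $n_1>0$ on $\{x_1<s\}$. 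Hence it suffices to establish the strict ordering $\left|\pa v_s/\pa n(x)\right|>\left|\pa v_s/\pa n(x^{*})\right|$ on the left half of $\Ga_D$. To get it, I would compare $v_s$ with its reflection on the right cap $\Sigma=\Om_s\cap\{x_1>s\}$: the function $w=v_s(x^{*})-v_s(x)$ is harmonic in $\Sigma$, vanishes on $\{x_1=s\}\cap\Om_s$ and on $\Ga_D\cap\overline{\Sigma}$; the maximum principle together with Hopf's lemma would then yield $w>0$ in $\Sigma$ and, by reflecting the resulting boundary gradient inequality, the required strict ordering of normal derivatives along $\Ga_D$.

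The hard part is the remaining piece of $\pa\Sigma$, the Neumann arc $\Ga_N\cap\overline{\Sigma}$, on which $w$ satisfies neither a Dirichlet nor a homogeneous Neumann condition. There $\pa v_s/\pa n(x)=0$, so $\pa w/\pa\nu(x)=\nabla v_s(x^{*})\cdot\nu(x)$ with $\nu(x)=x/R_1$, and closing the comparison requires this to be strictly positive for $x\in\Ga_N$ with $x_1>s$. The subtlety is that the reflected point $x^{*}$ lies \emph{strictly inside} $\Om_s$ with $x_1^{*}<s$, so by (iii) the $e_1$-component $\pa v_s/\pa x_1(x^{*})$ is \emph{negative}, while $\nu(x)$ has positive $e_1$-component; the two contributions to $\nabla v_s(x^{*})\cdot\nu(x)$ therefore compete, and even the affine-radial bound $\nabla v_s(x^{*})\cdot(x^{*}-se_1)>0$ from (ii) does not by itself decide the sign. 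Resolving it in favour of positivity is exactly where the full strength of the foliated Schwarz symmetry (i) is needed, to control the transverse components of $\nabla v_s(x^{*})$ and dominate the negative $e_1$-contribution; this, handled through the mixed-boundary comparison principles of Section~\ref{Max} (which also cover the junctions where $\Ga_D,\Ga_N$ meet $\{x_1=s\}$), is the crux of the argument. Once it is in place, (iv) and the reduction above deliver $T'(s)>0$ for $s>0$, and hence the strict monotonicity of $T$ on $[0,R_1-R_0)$.
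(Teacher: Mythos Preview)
Your overall plan is exactly what the paper intends: it states this theorem \emph{without proof}, remarking that the eigenfunction arguments carry over \emph{mutatis mutandis}, so the route through (i)--(iii), the Hadamard computation for (iv), and the reflection comparison for (v) is the right one. The simplification you note---that all the auxiliary functions become harmonic ($\mu=0$) rather than solving $-\Delta w=\tau_1 w$---is genuine and makes every appeal to Propositions~\ref{aux:smp} and~\ref{aux:wcp} strictly easier.

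There is, however, a concrete slip at precisely the step you flag as the crux. In computing $\partial w/\partial\nu$ on $\Ga_N\cap\{x_1>s\}$ you dropped the Jacobian of the reflection: since $w(x)=v_s(\sigma_s(x))-v_s(x)$, the chain rule gives $\nabla w(x)=\sigma_0\nabla v_s(x^{*})-\nabla v_s(x)$, and hence on $\Ga_N$ (where $\nabla v_s\cdot\nu=0$)
\[
\frac{\partial w}{\partial\nu}(x)=\nabla v_s(x^{*})\cdot\sigma_0\nu(x)=\frac{1}{R_1}\,\nabla v_s(x^{*})\cdot\sigma_0(x),
\]
\emph{not} $\nabla v_s(x^{*})\cdot\nu(x)$. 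This sign flip in the first coordinate is decisive: writing $\sigma_0(x)=\sigma_s(x)-2se_1=(x^{*}-se_1)-se_1$ yields the decomposition (this is exactly Lemma~\ref{lem:Neumann}, transplanted to $v_s$)
\[
\frac{\partial w}{\partial\nu}(x)=\frac{1}{R_1}\,\nabla v_s(x^{*})\cdot(x^{*}-se_1)\;-\;\frac{s}{R_1}\,\frac{\partial v_s}{\partial x_1}(x^{*}),
\]
and \emph{both} terms are strictly positive: the first by (ii) (since $x^{*}\in\Om_s$) and the second by (iii) (since $x_1^{*}<s$). So there is no competition to resolve and no further appeal to (i) is needed at this point; the foliated Schwarz symmetry has already done its work upstream, in proving (ii) and (iii) (via the analogue of Proposition~\ref{propo:axial}). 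With this correction your argument for (v) closes exactly as in the proof of Theorem~\ref{thm}: apply Proposition~\ref{aux:wcp} (with $\mu=0$) and the strong maximum principle to get $w>0$ in $\Om_s\cap\{x_1>s\}$, then Hopf's lemma on $\Ga_D\cap\{x_1>s\}$ gives the strict ordering of normal derivatives, and your half-boundary formula for $T'(s)$ finishes.

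One small wording point on (i): the relevant polarizers are all $H\in\H_*$ (hyperplanes through $0$ with $-e_1\in H$), not only those containing the axis $\R e_1$; the latter yield only axial symmetry. The identity $v_s^{H}=v_s$ follows because polarization preserves both $\int u$ and $\int|\nabla u|^2$ (Proposition~\ref{propo:polarizer}), hence preserves the energy $E$, and the minimizer of $E$ over $H^1_{\Ga_D}(\Om_s)$ is unique.
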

\medskip
\par The remainder of the paper is organized as follows. The foliated Schwarz symmetrization and its characterization via polarizations are introduced in Section~\ref{Symm}. Some variants of the maximum and the comparison principles for mixed boundary problems are given in Section~\ref{Max}. The foliated Schwarz symmetry and the monotonicity of the first eigenfunctions are proved in Section~\ref{Symm and Mono}. The Hadamard perturbation (the shape derivative) formula of the first eigenvalue is derived in Appendix~\ref{Shape}. In Section~\ref{Mono}, the monotonicity of the first eigenvalue is proved.
In the last section (Section~\ref{remarks}), we remark on the geometry of the other boundary problems and state a few open problems related to the shape monotonicity of the corresponding eigenvalues.
\section{Polarizations and foliated Schwarz symmetrization}\label{Symm}
In this section, we {define} of the foliated Schwarz symmetrization of functions defined on an annular domain and its characterization via polarizations. We note that, throughout the article any $x\in \RN $ is treated as a row vector $x=(x_1,x_2,\dots ,x_d)$ and therefore the standard inner product of $x,y\in \RN$ is given by $x\cdot y=x y^T=yx^T$. 
\subsection{Polarizations} 
A \emph{polarizer} is a closed affine half-space of $\RN$, and the set of polarizers is denoted by $\H$. For $H\in \H$, the boundary $\pa H$ is an affine hyperplane in $\RN$ and $H$ also gets an orientation from this affine hyperplane. 
We notice that, for $H\in \H $ 
there exists $h\in \SN \mbox{ and } b\in \RN$ such that

\noindent
\begin{equation*}
	H=\{x\in \RN : h\cdot (x-b) \leq 0\}.
\end{equation*}
Let $\si _H$ denote the reflection with respect to $\pa H$. Then, we have

\noindent
\begin{equation*}
	\si _H(x)=x-(2 h\cdot (x-b))h =x \big[I-2h^Th\big]+(2h b^T)h\quad \forall x\in \RN.
\end{equation*}
Let us denote the matrix $I-2h^T h$ by $\si _H$, then 

\noindent
\begin{equation*}
	\si _H(x)=x\si _H+(2h b^T)h\quad \forall x\in \RN.
\end{equation*}
%
%
Now, we {define} the polarization of the functions defined on balls or  annular domains. 
Let $e_1=(1,0,\dots ,0)\in \RN$. We consider the following subsets of $\H$:

\noindent
\begin{equation*}
	\H _0=\Big\{H\in \H:0\in \pa H \Big\}\mbox{ and } \H_*=\Big\{H\in \H_0:-e_1 \in H\Big\}.
\end{equation*}
\begin{definition}[Polarization]\label{polarization}
	Let $H\in \H_0$ be a polarizer and $B_{R}(0)$ be a ball, $R>0$. The polarization of a function $u:B_{R}(0)\longrightarrow \R$ with respect to $H$ is defined by

	\noindent
	\begin{equation*}
		u^H(x)\dq
		\left\{
		\begin{aligned}
			\max \left\{u(x),u(\si _H(x))\right\},& \mbox{ if } x\in B_{R}(0)\cap H,\\
			\min \left\{u(x),u(\si _H(x))\right\},& \mbox{ if } x\in B_{R}(0)\cap H\cm .
		\end{aligned}
	    \right.
	\end{equation*}
    Let $\Om _s=B_{R}(0)\setminus \overline{B_{r}(se_1)}$ be an annular domain, for $0<r<R<\infty \mbox{ and }0\leq s<R-r$. Let $u:\Om _s\longrightarrow \R $ be a non-negative function and let $\widetilde{u}:B_{R}(0)\longrightarrow \R $ be the zero extension of $u$ to the ball $B_{R}(0).$ The polarization (with respect to $H$) of $u$ is defined as the restriction of the polarization of the zero extension $\wide{u}:B_{R}(0)\longrightarrow \R$ to $\Om _s$ and it is denoted by $u^H$, i.e., 
	$u^H=\left . {\widetilde{u}}^H\right |_{\Om _s}$.
\end{definition}
%
\begin{remark}\label{rmk:fSS}
		Observe that, if $H\in \H_*$ and $u:\Om _s\longrightarrow \R$ is non-negative then
		$\wide{u}^H(x)=0$ for $x\in B_{r}(se_1)$. Therefore, for any $H\in \H _*$ we have $u^H(x)=u(x)$ for $x\in \Om _s\cap \si _H(B_{r}(se_1))$. 
\end{remark}
\medskip
For the definition of the polarization of a function defined on $\RN$ or on symmetric regions of $\RN$, we refer to \cite[Section 2]{Smets03} and \cite[Section 3]{Willem08}.
\noindent In the same spirit of \cite[Lemma 5.3]{BrockSolynin2000} and \cite[Proposition 2.3]{Smets03}, we have the following key proposition which follows from \cite[Proposition 2.18]{VS06tmna}. 
\begin{proposition}\label{propo:polarizer}
	Let $H\in \H_*$ be a polarizer 
	and $\Om _s\in \A_{R_0 R_1}$ be an annular domain as given in~\eqref{family}. Let 
	$1\leq p\leq \infty$ and $u:\Om _s\longrightarrow \R$
	be a non-negative function, and let $\widetilde{u}:B_{R_1}(0)\longrightarrow \R $ be the zero extension of $u$ to $B_{R_1}(0)$. If 
	$\wide{u}\in W^{1,p}(B_{R_1}(0))$ then
	$\wide{u}^H\in W^{1,p}(B_{R_1}(0))$ and
	
	\noindent
	\begin{equation*}
		\left\|\wide{u}^H\right\|_p=\left\|\wide{u}\right\|_p,\ \left\|\nabla \wide{u}^H\right\|_p
		=\left\|\nabla \wide{u}\right\|_p.
	\end{equation*}
	Furthermore, if $u\in W^{1,p}(\Om _s)$ with $\left. u\right |_{\Ga _D }=0$ then
	$u^H\in W^{1,p}(\Om _s)$ with $\left. u^H \right |_{\Ga _D}=0$, 
	and hence
	
	\noindent
	\begin{equation*}
		\left\|u^H\right\|_p=\left\|u\right\|_p,\ \left\|\nabla u^H\right\|_p=\left\|\nabla u\right\|_p.
	\end{equation*}
\end{proposition}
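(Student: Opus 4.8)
The plan is to reduce both assertions to the known behavior of polarization on a domain that is invariant under the reflection $\si_H$, and then to transfer the conclusion from the ball $B_{R_1}(0)$ back to the annulus $\Om_s$. The crucial starting observation is that, since $H\in\H_*\subseteq\H_0$, we have $0\in\pa H$; hence $\si_H$ may be taken as the pure linear reflection $x\mapsto x\si_H=x\big(I-2h^Th\big)$, which maps the centered ball $B_{R_1}(0)$ onto itself. Thus $B_{R_1}(0)$ is a $\si_H$-invariant Lipschitz domain, and the first part of the statement is precisely the assertion that polarization preserves the $L^p$- and gradient-norms on such a domain.

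First I would establish the first part, concerning $\widetilde{u}$ on $B_{R_1}(0)$. This is exactly the content of \cite[Proposition 2.18]{VS06tmna} (in the spirit of \cite[Lemma 5.3]{BrockSolynin2000} and \cite[Proposition 2.3]{Smets03}), which I would invoke directly. For completeness I would indicate the mechanism: the reflection $\si_H$ is a measure-preserving involution interchanging $B_{R_1}(0)\cap H$ and $B_{R_1}(0)\cap H\cm$, and on each orbit $\{x,\si_H(x)\}$ the polarization merely reassigns the pair of values $\{\widetilde{u}(x),\widetilde{u}(\si_H(x))\}$, placing the larger one in $H$; the corresponding gradient of $\widetilde{u}^H$ is either $\Gr\widetilde{u}$ or the reflected gradient $\big((\Gr\widetilde{u})\circ\si_H\big)\si_H$, whose Euclidean norm equals $|\Gr\widetilde{u}|$ since the matrix $\si_H$ is orthogonal. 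Summing $|\,\cdot\,|^p$ and $|\Gr\,\cdot\,|^p$ over each orbit therefore leaves the integrals invariant; the cited proposition supplies the nontrivial point that $\widetilde{u}^H$ is genuinely weakly differentiable on $B_{R_1}(0)$ with almost-everywhere gradient of this form, and it also covers the limiting case $p=\infty$.

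For the second part I would argue in three steps. (i) The zero extension $\widetilde{u}$ lies in $W^{1,p}(B_{R_1}(0))$: since $u\in W^{1,p}(\Om_s)$ has vanishing trace on the interface $\Ga_D=\pa B_{R_0}(se_1)$, extending by $0$ across $\Ga_D$ into the hole introduces no singular part in the distributional gradient, so $\widetilde{u}\in W^{1,p}(B_{R_1}(0))$ with $\Gr\widetilde{u}=\Gr u$ on $\Om_s$ and $\Gr\widetilde{u}=0$ on $B_{R_0}(se_1)$. (ii) The first part then gives $\widetilde{u}^H\in W^{1,p}(B_{R_1}(0))$ with $\|\widetilde{u}^H\|_p=\|\widetilde{u}\|_p$ and $\|\Gr\widetilde{u}^H\|_p=\|\Gr\widetilde{u}\|_p$, and restriction yields $u^H=\widetilde{u}^H|_{\Om_s}\in W^{1,p}(\Om_s)$. (iii) To see that $\left.u^H\right|_{\Ga_D}=0$, I would use Remark~\ref{rmk:fSS}: because $H\in\H_*$, the polarized extension $\widetilde{u}^H$ vanishes identically on the hole $B_{R_0}(se_1)$, so its trace on $\Ga_D$ computed from the hole side is $0$; by continuity of the trace operator for the single function $\widetilde{u}^H\in W^{1,p}(B_{R_1}(0))$, the trace from the $\Om_s$ side agrees with it and hence also vanishes. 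Finally, since both $\widetilde{u}$ and $\widetilde{u}^H$ vanish on the hole, the ball-norms coincide with the annulus-norms, and $\|u^H\|_p=\|u\|_p$, $\|\Gr u^H\|_p=\|\Gr u\|_p$ follow from (ii).

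I expect the main obstacle to be step (iii)---the zero-trace transfer---and, beneath it, the geometric fact underlying Remark~\ref{rmk:fSS}. The point is that the condition $H\in\H_*$ (that is, $-e_1\in H$) forces $h_1=h\cdot e_1\ge 0$, whence the hole has $d\dq h\cdot(se_1)=sh_1\ge 0$; for such a ball one has the inclusion $\si_H\big(B_{R_0}(se_1)\cap H\big)\subseteq B_{R_0}(se_1)$, because for $x\in H$ a one-line computation gives $|\si_H(x)-se_1|^2-|x-se_1|^2=4td\le 0$, where $t=h\cdot x\le 0$. Together with $\widetilde{u}\ge 0$ and $\widetilde{u}\equiv 0$ on the hole, this inclusion forces $\widetilde{u}^H\equiv 0$ there, which is exactly what makes the Dirichlet part of the mixed boundary condition on $\Ga_D$ survive the polarization. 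Checking that the restriction to $\Om_s$ is a genuine $W^{1,p}$-function carrying the stated trace, rather than merely an $L^p$ rearrangement, is the only place demanding care beyond routine bookkeeping.
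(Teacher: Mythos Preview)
Your proposal is correct and follows essentially the same route as the paper: the paper does not give a detailed proof but simply says the proposition follows from \cite[Proposition 2.18]{VS06tmna} (in the spirit of \cite[Lemma 5.3]{BrockSolynin2000} and \cite[Proposition 2.3]{Smets03}), and you invoke exactly that result for the ball and then transfer back to the annulus via Remark~\ref{rmk:fSS}. Your write-up is more detailed than the paper's---in particular you spell out the geometric inclusion $\si_H\big(B_{R_0}(se_1)\cap H\big)\subseteq B_{R_0}(se_1)$ underlying Remark~\ref{rmk:fSS} and the trace argument for step (iii)---but the architecture is the same.
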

\subsection{The foliated Schwarz symmetrization}
Now, we give the definition of the foliated Schwarz symmetrization for functions defined on balls or an annular domains. 
	\begin{definition}[Foliated Schwarz symmetrization for radially symmetric domains] \label{fSS}
	\noindent
	%
		%
		Let $\R ^+ =[0,\infty)$, $r,R\in \R ^+$ with $r<R$, $a\in \RN$, and $e_1=(1,0,\dots ,0)\in \RN$. Let $\Om =B_{R}(a)\setminus \overline{B_{r}(a)}$ be a ball or a concentric annulus and let $u$ be a non-negative measurable function on $\Om $. The foliated Schwarz symmetrization of $u$ with respect to the $1$-dimensional closed half subspace $a-\R ^+ e_1$, 
		is the unique function $u^*$ such that, for $t\in (r,R)$ and $c\ge 0$
		
		\noindent
		\begin{equation*}
			\{u^*\geq c\}\cap \pa B_t(a)=B_{\rho }(a-t e_1) \cap \pa B_t(a),
		\end{equation*}
		where $\rho \geq 0$ is defined by
		
		\noindent
		\begin{equation*}
			\H^{d-1}(B_{\rho }(a-t e_1)\cap \pa B_t(a))=\H^{d-1}(\{u\geq c\}\cap \pa B_t(a)),
		\end{equation*}
		where $\H^{d-1}$ is the $(d-1)-$dimensional Hausdorff measure.
		%
		%
%
\end{definition}
\medskip
Now, we extend the definition of the foliated Schwarz symmetrization for the functions defined on non-concentric annular domains. To a non-concentric annular domain $B_{R}(0)\setminus \overline{B_{r}(s e_1)}$, one can associate two natural radial domains. One is the ball $B_{R}(0)$ and the other  is the concentric annular domain $B_{R+s}(s e_1)\setminus \overline{B_{r}(s e_1)}$. Each of these radial domains gives a foliated Schwarz symmetrization (different from the other) for a function a defined on $B_{R}(0)\setminus \overline{B_{r}(s e_1)}$.
%
\begin{definition}[Foliated Schwarz symmetrization for non-radial domains]
	Let $\Om _s=B_{R}(0)\setminus \overline{B_{r}(s e_1)} $ 
	be an annular domain, for $0<r<R<\infty, \ 0\leq s<R-r$, and let $u:\Om _s\longrightarrow \R$ be a non-negative measurable function. Let $\wide{u}$ be the zero extension of $u$ to the ball $B_{R}(0)$ and let $\overline{u}$ be the zero extension of $u$ to the concentric annular domain $B_{R +s}(s e_1)\setminus \overline{B_{r}(s e_1)}$. 
	\begin{enumerate}[\rm (a)]
		\item The foliated Schwarz symmetrization of $u$ with respect to the $1$-dimensional closed half subspace $-\R ^+ e_1$ is defined as the restriction of
		$(\wide{u})^*$ to ${\Om _s}$.
		\item The foliated Schwarz symmetrization of $u$ with respect to the $1$-dimensional closed half subspace $s e_1-\R ^+ e_1$ is defined as the restriction of
		$(\overline{u})^*$ to ${\Om _s}$.
	\end{enumerate}
	We denote both the foliated symmetrizations by $u^*$ and the distinction will be clear from the context.
\end{definition}
\begin{remark}
	$\rm (a)$ For any subset $A$ of a sphere $\pa B_t(a),\, t\in (r, R)$, the rearrangement $A^*\subset \pa B_t(a)$ is defined as $A^*=B_{\rho }(a-t e_1)\cap \pa B_t(a)$, where $\rho \geq 0$ is chosen such that 
	
	\noindent
	\begin{equation*}
		\H^{d-1}(B_{\rho }(a-r e_1)\cap \pa B_t(a))=\H^{d-1}(\{u\geq c\}\cap \pa B_t(a)).
	\end{equation*}
	Now, the foliated Schwarz symmetrization $u^*$ of a non-negative measurable function $u: B_{R}(a)\setminus \overline{B_r(a)}\longrightarrow \R $ can be given as
	
	\noindent
	\begin{equation*}
		u^*(x)=\sup \left\{c>0 : x\in \big(\{u\geq c\}\cap \pa B_r(a)\big)^* \right\}, \ x\in B_{R}(a)\setminus \overline{B_r(a)},
	\end{equation*}
	where $r=|x|$.\\
	$\rm (b)$ From the definition above, we observe that: for a non-negative function $u:\Om _s \longrightarrow \R$ we have
	$\left(\wide{u}\right)^*=0$ in $B_{R_0}(se_1).$\\
	$\rm (c)$ The foliated Schwarz symmetrization with respect to an $1$-dimensional closed affine half subspace $F$ of $\RN$ is the cap symmetrization with respect to $F$ (see \cite[Section 3]{Willem08}).
\end{remark}
\begin{definition}
	We say that a non-negative function $u:\Om _s \longrightarrow \R$ has the foliated Schwarz symmetry in $\Om _s$ with respect to the $1$-dimensional closed affine half subspace
	$a-\R ^+ e_1,$  $a\in \{0, s e_1\}$, if 
	
	\noindent
	\begin{equation*}
		u^*=u \mbox{ in } \Om _s.
	\end{equation*} 
\end{definition}
The following characterization of the foliated Schwarz symmetry of a function define on $\Om _s$ is motivated from \cite[Section 3]{Willem08} and in \cite[Section 6]{BrockSolynin2000}.
\begin{proposition}\label{propo:characterization}
	Let $\Om _s \in \A_{R_1 R_0}$ be an annular domain as given in~\eqref{family} with $0\leq s <R_1-R_0$. Let $u:\Om _s\longrightarrow \R$ be a non-negative function.
	Then 
	$u$ has the foliated Schwarz symmetry in $\Om _s$
	\begin{enumerate}[\rm (i)]
		\item with respect to the closed affine half subspace $-\R ^+ e_1$ if and only if $u^H=u \mbox{ a.e. in }\Om _s$ for every $H\in \H_*$.
		\item with respect to the closed affine half subspace $s e_1-\R ^+ e_1$ if and only if $u^H=u \mbox{ a.e. in }\Om _s$ for every $H\in s e_1+\H_*$.
	\end{enumerate}
\end{proposition}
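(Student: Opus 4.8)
The plan is to remove the annular geometry by passing to the associated radial domain, reduce both assertions to a single statement about the cap symmetrization on a ball (resp.\ a concentric annulus) centered at the origin, and then settle that statement by combining an elementary reflection inequality with the polarization approximation of symmetrization. For part (i) I would work with the zero extension $\widetilde u$ on the ball $B_{R_1}(0)$. Since $u^*$ is by definition the restriction of $(\widetilde u)^*$ to $\Om_s$, and since both $\widetilde u$ and $(\widetilde u)^*$ vanish on the inner ball $B_{R_0}(se_1)$, the identity $u^*=u$ a.e.\ in $\Om_s$ is equivalent to $(\widetilde u)^*=\widetilde u$ a.e.\ in $B_{R_1}(0)$. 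Likewise, for $H\in\H_*$ Remark~\ref{rmk:fSS} gives $\widetilde u^H=0=\widetilde u$ on $B_{R_0}(se_1)$, so $u^H=u$ a.e.\ in $\Om_s$ is equivalent to $(\widetilde u)^H=\widetilde u$ a.e.\ in $B_{R_1}(0)$. For part (ii) I would instead use the zero extension $\overline u$ to the concentric annulus $B_{R_1+s}(se_1)\setminus\overline{B_{R_0}(se_1)}$ and translate by $-se_1$; this sends that annulus to $B_{R_1+s}(0)\setminus\overline{B_{R_0}(0)}$, the axis $se_1-\R^+e_1$ to $-\R^+e_1$, and the family $se_1+\H_*$ to $\H_*$. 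In both cases it therefore suffices to prove: on a radial domain $D$ (a ball or a concentric annulus centered at $0$), a non-negative measurable $w$ satisfies $w^*=w$ a.e.\ if and only if $w^H=w$ a.e.\ for every $H\in\H_*$, where $*$ denotes the cap symmetrization with respect to $-\R^+e_1$.

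For the forward implication assume $w=w^*$ a.e.; replacing $w$ by its symmetric representative $w^*$ (which changes neither side, as $\sigma_H$ is measure preserving) I may assume $w$ is genuinely cap symmetric. Fix $H=\{x\in\RN:h\cdot x\le0\}\in\H_*$, so $h\in\SN$ and $-e_1\in H$, i.e.\ $h\cdot e_1\ge0$. Because $0\in\pa H$, each sphere $\pa B_t(0)\subset D$ is invariant under $\sigma_H$, and the pole $-te_1$ lies in $H$ (as $H$ is a half-space with $0\in\pa H$, hence a convex cone containing $-e_1$). On $\pa B_t(0)$ the function $w$ is a nonincreasing function of the geodesic distance to $-te_1$. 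A direct computation with $0\in\pa H$ gives $|x-\sigma_H p|^2=|x-p|^2+4(h\cdot x)(h\cdot p)\ge|x-p|^2$ whenever $x,p\in H$; since geodesic distance on the sphere is monotone in chord length, this yields $\mathrm{dist}(x,-te_1)\le\mathrm{dist}(\sigma_H x,-te_1)$, and hence $w(x)\ge w(\sigma_H x)$ for every $x\in D\cap H$ (and correspondingly $w(x)\le w(\sigma_H x)$ on $D\cap H\cm$). That is exactly $w^H=w$.

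The reverse implication is the main obstacle: I must show that invariance under every $H\in\H_*$ forces $w=w^*$. Here I would invoke the polarization approximation of the cap symmetrization from \cite[Section~6]{BrockSolynin2000} and \cite[Section~3]{Willem08}, namely that there is a sequence $(H_n)\subset\H_*$ for which the iterated polarizations $w^{H_1},\,w^{H_1H_2},\dots$ converge to $w^*$ (in $L^p$ when $w\in L^p$, and in measure in general), the construction acting spherewise and hence applying verbatim on a ball or on a concentric annulus. Granting this, the hypothesis $w^H=w$ for all $H\in\H_*$ makes every finite iterate equal to $w$, so letting $n\to\infty$ gives $w=w^*$ a.e. Combined with the reduction, this establishes (i) and (ii). I expect the only genuine difficulty to be the justification (or precise citation, in the relevant function-space setting) of this approximating sequence; the reflection inequality and the domain reductions are routine.
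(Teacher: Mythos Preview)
Your proposal is correct and follows essentially the same route as the paper: reduce to the zero extensions $\widetilde u$ (on the ball) and $\overline u$ (on the concentric annulus), and invoke the known polarization characterization of cap symmetrization on radial domains from \cite{BrockSolynin2000,Willem08}. The paper's own proof is a two-line appeal to exactly this reduction; you have simply unpacked both the reduction (checking that the equivalences $u^*=u\Leftrightarrow(\widetilde u)^*=\widetilde u$ and $u^H=u\Leftrightarrow(\widetilde u)^H=\widetilde u$ hold because the extension and its rearrangements vanish on the inner ball) and the cited characterization itself (the forward direction via the chord-length inequality $|x-\sigma_H p|\ge|x-p|$ for $x,p\in H$, the reverse via the polarization-approximation sequence).
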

\begin{proof}
	The characterization holds for the extensions $\wide{u}$ and $\overline{u}$ to the ball $B_{R_1}(0)$ and the concentric annulus $B_{R_1+s}(s e_1)\setminus \overline{B_{R_0}(s e_1)}$ respectively, 
	and hence it holds for $u$ in~$\Om _{s}$.
\end{proof}
The following characterization of the foliated Schwarz symmetry also follows by a reasoning along the same lines as in \cite{DamascelliPacella, Li-Zhang, Weth2010}.
\begin{remark}\label{rmk:fSS1}
	Let $a\in \{0, s e_1 \}$. A non-negative function $u:\Om _s\longrightarrow \R$ has the foliated Schwarz symmetry in $\Om _s$ with respect to $a-\R ^+ e_1$ if and only if $u$ depends only on $r=|x|$ and the polar angle
	$\T (x)=\cos ^{-1}\left(-\frac{x-a}{|x-a|}\cdot e_1\right)$,
	and $u$ is non-increasing in $\T $ for any $r>0$.
\end{remark}
	\section{Maximum principles}\label{Max}
In this section, we state the strong maximum principle and Hopf's lemma for the Laplace operator. We derive the maximum principle and the comparison principle, which hold for the Zaremba problem for the Laplace operator. Firstly, we state a regularity result for the eigenfunctions of the Zaremba problems, for a  proof see \cite[Theorem 9.19]{GilbargTrudinger}.
\begin{proposition}
    Let $\Om=\Om_s$ be a domain as given in \eqref{family} and $u\in H^2(\Om)$ be a solution of $-\De u= \la u$ for some $\la \in \R$. Then  $u\in \C^2(\Om)\cap \C^1(\overline{\Om}).$
\end{proposition}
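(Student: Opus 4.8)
The plan is to split the argument into an interior part, which yields $u\in\C^2(\Om_s)$, and an up-to-the-boundary part, which upgrades this to $u\in\C^1(\overline{\Om_s})$. The whole scheme is a regularity bootstrap for the linear equation $-\De u=f$ with $f\dq \la u$, the crucial point being that the right-hand side is exactly as smooth as $u$ itself, so that every gain in regularity for $u$ immediately feeds back into $f$. First I would treat the interior. Writing the eigenfunction equation as $-\De u=f$ with $f=\la u\in H^2(\Om_s)$, standard interior $L^p$ (equivalently $H^k$) elliptic estimates give $u\in H^4_{loc}(\Om_s)$; iterating, $f\in H^{2k}_{loc}$ forces $u\in H^{2k+2}_{loc}$ for every $k$, so by the Sobolev embedding $u\in\C^\infty(\Om_s)$, and in particular $u\in\C^2(\Om_s)$. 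The interior Schauder step that converts Hölder regularity of $f$ into $\C^{2,\al}$ regularity of $u$ is exactly \cite[Theorem 9.19]{GilbargTrudinger}.

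For the boundary regularity I would exploit the decisive structural feature of the domains in~\eqref{family}: since $0\le s<R_1-R_0$, the inner ball satisfies $\overline{B_{R_0}(se_1)}\subset B_{R_1}(0)$, so the two boundary pieces $\Ga_D=\pa B_{R_0}(se_1)$ and $\Ga_N=\pa B_{R_1}(0)$ are \emph{disjoint}, each a $\C^\infty$ sphere carrying a single type of boundary condition. As a Zaremba eigenfunction, $u$ satisfies the homogeneous Dirichlet condition $u|_{\Ga_D}=0$ near $\Ga_D$ and the homogeneous Neumann condition $\pa u/\pa n=0$ near $\Ga_N$, in each case for the equation $-\De u=f$. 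On each component the boundary is smooth and the boundary operator is of constant (Dirichlet, respectively Neumann) type, so the corresponding global elliptic estimates apply, and I would run the same $H^2\to H^4\to\cdots$ bootstrap up to the boundary in a collar neighbourhood of each sphere. This produces $u\in\C^\infty$ up to $\Ga_D$ and up to $\Ga_N$, hence in particular $u\in\C^1$ there; patching with the interior conclusion gives $u\in\C^1(\overline{\Om_s})$.

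The point I would flag as the genuine difficulty — and the reason the statement deserves its own proposition rather than a blanket citation — is that for a \emph{general} Zaremba (mixed) problem regularity is obstructed precisely at the interface $\overline{\Ga_D}\cap\overline{\Ga_N}$, where the boundary condition switches type and solutions typically fail to be $\C^1$, lying only in some fractional space dictated by the geometry of the junction. Here this obstruction simply does not arise: because $\Ga_D$ and $\Ga_N$ sit on two separated connected components of $\pa\Om_s$, there is no such interface, and the mixed problem decouples into a pure Dirichlet problem near the inner sphere and a pure Neumann problem near the outer sphere. Once this is observed, nothing beyond the classical Schauder and $L^p$ theory for a single smooth boundary piece is required, and the $\C^\infty$ smoothness of the spheres makes every bootstrap step legitimate.
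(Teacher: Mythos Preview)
Your argument is correct and in fact supplies the details the paper omits: the paper does not prove this proposition at all but simply cites \cite[Theorem 9.19]{GilbargTrudinger}. Your bootstrap, together with the observation that $\overline{\Ga_D}\cap\overline{\Ga_N}=\emptyset$ so the mixed problem decouples into pure Dirichlet and pure Neumann problems near each smooth boundary component, is exactly why that citation is legitimate here.
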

 Next we state the strong maximum principle \cite[Theorem 2.7]{Han} and Hopf's lemma \cite[Lemma 3.4]{GilTru}.
\begin{proposition}\label{SMP}
	Let $\Om \subset \RN $ be a bounded domain. Let
	$u\in \C^2(\Om )\cap \C^1(\overline{\Om })$
	be a non-negative function satisfying
	
	\noindent
	\begin{equation*}
		-\De u\geq 0\mbox{ in }\Om .
	\end{equation*}
	Then
	\begin{enumerate}[\rm (a)]
		\item {Strong Maximum Principle:}
		$u\equiv C$ a constant, or else $u>0$ in $\Om $.
		\item {Hopf's lemma:}
		Let $x_0\in \pa \Om $ satisfying the interior sphere condition and $u(x)\geq u(x_0)$ in $\Om $.
		If $u$ is not constant in $\Om $, then

		\noindent
		\begin{equation*}
			\frac{\pa u}{\pa \nu }(x_0)<0,
		\end{equation*}
		for any outward direction $\nu $ to $\Om $ at $x_0$.
	\end{enumerate}
\end{proposition}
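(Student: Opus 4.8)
The plan is to prove part (b), Hopf's lemma, by a barrier (comparison) argument, but to set it up I first dispose of part (a), the strong maximum principle, which supplies the strict interior inequality that the barrier comparison needs. Throughout, $-\De u\geq 0$ says precisely that $u$ is superharmonic, so $u$ obeys the sub-mean-value inequality
\[
u(x)\geq \frac{1}{|B_r(x)|}\int_{B_r(x)}u\,{\rm d}y
\]
for every ball $B_r(x)\subset \Om$; this is the only analytic input for (a).

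For part (a), suppose $u$ is non-constant; I claim $u>0$ throughout $\Om$. If not, then $u(x_0)=0$ at some interior $x_0$, and since $u\geq 0$ this is an interior minimum. The mean-value inequality then forces $0=u(x_0)\geq \frac{1}{|B_r(x_0)|}\int_{B_r(x_0)}u\,{\rm d}y\geq 0$, so the non-negative integrand vanishes identically and $u\equiv 0$ on $B_r(x_0)$. Hence the set $\{u=0\}$ is open; it is also closed in $\Om$ by continuity, so by connectedness $u\equiv 0$, contradicting that $u$ is non-constant. Thus either $u$ is constant or $u>0$ in $\Om$.

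For part (b), I first apply (a) to the shifted function $u-u(x_0)$, which is again non-negative, superharmonic, and non-constant, to conclude $u>u(x_0)$ at every interior point. The interior sphere condition gives a ball $B_R(y)\subset \Om$ with $x_0\in \pa B_R(y)$, and on the shell $A=B_R(y)\setminus \overline{B_{R/2}(y)}$ I would use the standard barrier $w(x)=e^{-\al|x-y|^2}-e^{-\al R^2}$. A direct computation gives $-\De w=e^{-\al|x-y|^2}\big(2\al d-4\al^2|x-y|^2\big)$, which is $\leq 0$ on $A$ once $\al\geq 2d/R^2$; moreover $w=0$ on $\pa B_R(y)$ and $w>0$ on $\pa B_{R/2}(y)$. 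Since $u-u(x_0)$ is strictly positive on the compact inner sphere $\pa B_{R/2}(y)\cset \Om$, I may fix $\eps>0$ small so that $u-u(x_0)\geq \eps w$ on all of $\pa A$. The difference $g=(u-u(x_0))-\eps w$ is then superharmonic on $A$ and non-negative on $\pa A$, so the weak maximum principle yields $g\geq 0$ on $A$, while $g(x_0)=0$. Thus $g$ attains its minimum over $\overline{A}$ at the boundary point $x_0$, whence $\frac{\pa g}{\pa n}(x_0)\leq 0$ for the outward unit normal $n$. Since $\frac{\pa w}{\pa n}(x_0)=-2\al R\,e^{-\al R^2}<0$, this gives $\frac{\pa u}{\pa n}(x_0)\leq \eps\frac{\pa w}{\pa n}(x_0)<0$. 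Because $x_0$ is a minimum of $u$ over $\overline{\Om}$, the tangential part of $\nabla u(x_0)$ vanishes, so $\nabla u(x_0)$ is a negative multiple of $n$ and $\frac{\pa u}{\pa \nu}(x_0)=\nabla u(x_0)\cdot \nu<0$ for every outward direction $\nu$.

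The genuinely delicate step is the barrier comparison in (b): one must choose $w$ so that $-\De w$ has the correct sign on the shell (forcing the exponential profile and the lower bound on $\al$) and then invoke the weak maximum principle on $A$ to propagate the boundary ordering inward. Everything else---the sub-mean-value inequality and the connectedness argument in (a)---is routine, and the regularity $u\in \C^2(\Om)\cap \C^1(\overline{\Om})$ guaranteed by the preceding proposition is exactly what makes the normal derivative at $x_0$ meaningful.
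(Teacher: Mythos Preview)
Your argument is correct and follows the standard textbook route (mean-value inequality plus connectedness for (a); exponential barrier on an annular shell for (b)). The paper itself does not prove Proposition~\ref{SMP} at all: it simply records the statement and cites \cite[Theorem~2.7]{Han} and \cite[Lemma~3.4]{GilTru}, so there is no in-paper proof to compare against. What you have written is precisely the proof one finds in those references.

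One small point on your final sentence in (b): the claim that ``the tangential part of $\nabla u(x_0)$ vanishes'' presupposes that $\pa\Om$ is $\C^1$ near $x_0$, which the interior sphere condition alone does not guarantee. You do not need this detour. Since $g\geq 0$ on $\overline{A}$ with $g(x_0)=0$, for any direction $\nu$ with $\nu\cdot n>0$ (where $n=(x_0-y)/R$ is the outward normal to the interior ball) one has $x_0-t\nu\in A$ for small $t>0$, hence $\frac{\pa g}{\pa \nu}(x_0)\leq 0$ directly; and $\frac{\pa w}{\pa \nu}(x_0)=-2\al R\,e^{-\al R^2}(n\cdot\nu)<0$, giving $\frac{\pa u}{\pa \nu}(x_0)<0$ without any appeal to the smoothness of $\pa\Om$.
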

Next we prove a variant of the weak maximum principle for a perturbation of the Laplace operator. 
\begin{proposition}\label{aux:smp}
	Let $\Om $ be a domain in $\RN$ and $\la _1(\Om )$ be the first Dirichlet eigenvalue of $-\De$ in~$\Om$. Let $\Om ' \subseteq \Om $ be an open set, $\mu \leq \la _1 (\Om )$ and let $v\in \C^2(\Om ')\cap \C^1(\overline{\Om '})$ be a function satisfying:
	\begin{equation}\label{eqn:aux}
	\left .
	\begin{aligned}
	-\De v  - \mu v &\geq 0 \mbox{ in }\Om ',\\
	v &\geq 0  \mbox{ on }\pa \Om '.
	\end{aligned}
	\right \}
	\end{equation}
	%
	%
	If ${\rm (i)}\ \mu < \la _1 (\Om )$ or ${\rm (ii)}\ \mu =\la _1 (\Om )$ and $|\Om \setminus \Om '|> 0$, then $v\geq 0$ in $\Om '$.
	%
\end{proposition}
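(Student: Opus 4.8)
The plan is to test the differential inequality against the negative part of $v$ and to compare the resulting Dirichlet energy with the Rayleigh quotient characterizing $\la _1(\Om )$. Set $v^-\dq \max\{-v,0\}$ and let $w$ denote its extension by zero from $\Om '$ to all of $\Om $. Since $v\geq 0$ on $\pa \Om '$ and $v\in \C^1(\overline{\Om '})$, the function $v^-$ is continuous on $\overline{\Om '}$, lies in $H^1(\Om ')$, and vanishes on $\pa \Om '$; I would first argue that consequently $w\in \ho$ (see the obstacle below). The pointwise inequality $-\De v-\mu v\geq 0$ in $\Om '$ yields, after multiplying by a nonnegative test function and integrating, $\int _{\Om '}\big(\nabla v\cdot \nabla \phi-\mu v\phi\big)\dx \geq 0$ for every $0\leq \phi \in H^1_0(\Om ')$. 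Taking $\phi=v^-$ and using that $\nabla v^-=0$ a.e.\ on $\{v\geq 0\}$ while $v^-=-v$ and $\nabla v^-=-\nabla v$ on $\{v<0\}$, I obtain
\begin{equation*}
\int _{\Om '}|\nabla v^-|^2\dx \;\le\; \mu \int _{\Om '}(v^-)^2\dx .
\end{equation*}

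Rewriting this in terms of $w$ and invoking the variational characterization $\int _\Om |\nabla \phi |^2\dx \geq \la _1(\Om )\int _\Om \phi ^2\dx$ for all $\phi \in \ho$, I arrive at
\begin{equation*}
\la _1(\Om )\int _\Om w^2\dx \;\le\; \int _\Om |\nabla w|^2\dx \;\le\; \mu \int _\Om w^2\dx ,
\end{equation*}
that is, $\big(\mu -\la _1(\Om )\big)\int _\Om w^2\dx \geq 0$. In case ${\rm (i)}$ the factor $\mu -\la _1(\Om )$ is strictly negative, forcing $\int _\Om w^2\dx=0$; hence $w\equiv 0$, i.e.\ $v^-\equiv 0$ and $v\geq 0$ in $\Om '$.

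In case ${\rm (ii)}$, with $\mu =\la _1(\Om )$, the displayed chain collapses to equalities, so if $w\not\equiv 0$ then $w$ realizes the infimum in the Rayleigh quotient and is therefore a first Dirichlet eigenfunction of $-\De $ on $\Om $. Since $\la _1(\Om )$ is simple and its eigenfunction has a strict sign in $\Om $ (by Proposition~\ref{SMP}), $w$ cannot vanish on a set of positive measure; but $w\equiv 0$ on $\Om \setminus \Om '$, and $|\Om \setminus \Om '|>0$ by hypothesis — a contradiction. Hence $w\equiv 0$ and again $v\geq 0$ in $\Om '$. This is exactly the step that consumes the measure condition, and it is the delicate point of the borderline case.

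The main obstacle is the justification that the zero extension $w$ lies in $\ho$, since $\Om '$ is an arbitrary open subset with possibly irregular boundary and I only know $v\in \C^1(\overline{\Om '})$, so that a continuous $H^1$ function vanishing pointwise on $\pa \Om '$ need not a priori be in $H^1_0(\Om ')$. I would circumvent this by truncation: for $\eps >0$ the function $(v^--\eps )^+$ is continuous and its support is contained in the closed set $\{v\leq -\eps \}$, which is disjoint from $\pa \Om '$ and hence a compact subset of the open (bounded) set $\Om '$; thus $(v^--\eps )^+$ has compact support in $\Om '$ and its zero extension genuinely belongs to $\ho$. Running the energy estimate with $(v^--\eps )^+$ in place of $v^-$ produces $\int _{\{v<-\eps\}}|\nabla v|^2\dx \le \mu\int _{\{v<-\eps\}}(v^2+\eps v)\dx$, and letting $\eps \downarrow 0$ (monotone/dominated convergence, the $\eps v$ term vanishing in the limit) recovers the inequality for $w$ while preserving the equality case needed in ${\rm (ii)}$.
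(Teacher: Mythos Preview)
Your proof is correct and follows essentially the same approach as the paper: both test the differential inequality against $v^-$, obtain $\int_{\Om '}|\nabla v^-|^2\dx \le \mu \int_{\Om '}(v^-)^2\dx$, pass to the zero extension in $\ho$, and invoke the variational characterization of $\la _1(\Om)$ together with the strict sign of the first eigenfunction to dispose of cases (i) and (ii). The paper simply asserts that $v^-=0$ on $\pa \Om '$ forces the zero extension into $\ho$ without discussing the regularity of $\pa \Om '$, so your truncation by $(v^--\eps)^+$ is in fact a welcome strengthening of that step rather than a deviation in strategy.
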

\begin{proof} 
	%
	Suppose $v^-\neq 0$ in $\Om '$. By multiplying~\eqref{eqn:aux} with $v^-$ and integrating by parts, we obtain
	\begin{equation}\label{aux:ibp}
	-\int _{\Om '}|\nabla v^-|^2\dx
	-\int _{\pa \Om '}\frac{\pa v}{\pa n}v^-\dS
	+ \mu \int _{\Om '} |v^-|^2\dx \geq 0.
	\end{equation} 
	
	\noindent Since $v\geq 0$ on $\pa \Om '$ we have $v^-=0$ on $\pa \Om '$ and hence from~\eqref{aux:ibp} we have
	
	\noindent
	\begin{equation*}
	\displaystyle \int _{\Om '}|\nabla v^-|^2\dx \leq \mu \int _{\Om '} |v^-|^2\dx .
	\end{equation*}
	As $v^-=0$ on $\pa \Om '$ the zero extension (denoted by $\wide{v}$) of $v^-$ to $\Om $ belongs to $H^1_0(\Om )$. Thus, from the inequality above we have
	\begin{equation}\label{aux:eqn-wmp1}
	\displaystyle \int _{\Om }|\nabla \wide{v}|^2\dx \leq \mu \int _{\Om } |\wide{v}|^2\dx .
	\end{equation}
	Therefore, from the variational characterization of~$\la _1(\Om )$ 
	we obtain $\la _1(\Om )\leq \mu.$
%
	
	\noindent
	For $\mu < \la _1(\Om )$, this immediately gives a contradiction, and hence $v\geq 0$ in $\Om '$. On the other hand, for $\mu = \la _1(\Om )$ and $|\Om \setminus \Om '|>0$, the inequality~\eqref{aux:eqn-wmp1} implies that $\wide{v}$ is an eigenfunction corresponding to $\la _1(\Om )$. This is a contradiction, as the extension $\wide{v}$ vanishes on the set $\Om \setminus \Om '$ of positive measure in $\Om $. Therefore, we have $v\geq 0$ in $\Om '$.
\end{proof}
%
%
Next, for a general domain (not necessarily an annular domain), we consider the Zaremba eigenvalue problem. For $\Om \subseteq \RN$ with
$\pa \Om =\Ga _N \sqcup \Ga _D$, 
let us consider 
the following  Zaremba eigenvalue problem on $\Om $:
\begin{equation}\label{mixed}
\left .
\begin{aligned}
-\De u&= \tau u \text{ in }\Om ,\\
u&=0 \text{ on }\Ga _D,\\
\frac{\pa {u}}{\pa {n}}&=0 \text{ on }\Ga _N.
\end{aligned}
\right \}
\end{equation}
As before, the first eigenvalue of \eqref{mixed} has the following variational characterization:

\noindent
\begin{equation*}
\tau _1(\Om )=\inf \left\{\mu : \int _{\Om }|\nabla \phi |^2\dx\leq  \mu \int _{\Om }\phi ^2 \dx
\quad \forall \phi \in \HD{}
\right\},
\end{equation*}
and the minimizer is an eigenfunction corresponding to $\tau _1(\Om )$.
%
Next we prove an analogue of Proposition~\ref{aux:smp} for $\tau_1(\Om )$.
\begin{proposition}\label{aux:wcp}
	Let $\Om $ be a domain in $\RN$ and $\tau _1(\Om )>0$ be the first eigenvalue of~\eqref{mixed} in $\Om $.
	Let $\Om '\subseteq \Om $ be an open set with  
	$\pa \Om '=\Ga _N'\sqcup \Ga _D'$ such that $\Ga _N' \subseteq \Ga _N$. Let $\mu \leq \tau _1(\Om )$ and let $v\in \C^2(\Om ')\cap \C^1(\overline{\Om '})$ satisfies:
	\begin{equation}\label{eqn1:aux}
	\left.
	\begin{aligned}
	-\De v  - \mu v &\geq 0  \mbox{ in }\Om ',\\
	v &\geq 0  \mbox{ on }\Ga _D',\\
	\frac{\pa {v}}{\pa {n}}&\geq 0 \mbox{ on }\Ga _N'.
	\end{aligned}
	\right\}
	\end{equation}
	If {\rm (i)} $\mu < \tau _1(\Om ) $ or {\rm (ii)} $\mu = \tau _1(\Om ) $ and $|\Om \setminus \Om '|> 0$, then $v\geq 0$ in $\Om '$.
\end{proposition}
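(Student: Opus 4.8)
The plan is to follow the template of Proposition~\ref{aux:smp}, testing the differential inequality against the negative part $v^-$, but now carefully accounting for the mixed boundary data on $\pa \Om'$ and, crucially, for the function space into which $v^-$ extends. Suppose, for contradiction, that $v^-\not\equiv 0$ in $\Om'$. Multiplying $-\De v-\mu v\geq 0$ by the non-negative function $v^-\in H^1(\Om')$ and integrating by parts over $\Om'$, I obtain, exactly as in Proposition~\ref{aux:smp}, the inequality
\[
-\int_{\Om'}|\nabla v^-|^2\dx-\int_{\pa \Om'}\frac{\pa v}{\pa n}v^-\dS+\mu\int_{\Om'}|v^-|^2\dx\geq 0 .
\]

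The first new point is the treatment of the boundary integral, which I would split along $\pa \Om'=\Ga_D'\sqcup \Ga_N'$. On $\Ga_D'$ the hypothesis $v\geq 0$ forces $v^-=0$, so that portion contributes nothing; on $\Ga_N'$ the hypotheses $\frac{\pa v}{\pa n}\geq 0$ and $v^-\geq 0$ make the integrand non-negative. Hence $\int_{\pa \Om'}\frac{\pa v}{\pa n}v^-\dS\geq 0$, and the displayed inequality yields the Rayleigh-type bound
\[
\int_{\Om'}|\nabla v^-|^2\dx\leq \mu\int_{\Om'}|v^-|^2\dx .
\]

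The decisive step, and the one that genuinely departs from the Dirichlet argument, is to read this bound through the variational characterization of $\tau_1(\Om)$. In Proposition~\ref{aux:smp} the zero extension of $v^-$ landed in $H_0^1(\Om)$ because $v^-$ vanished on all of $\pa \Om'$; here $v^-$ need not vanish on $\Ga_N'$, so $H_0^1(\Om)$ is unavailable. Instead I would let $\wide{v}$ denote the extension of $v^-$ by zero to $\Om$ and argue that $\wide{v}\in \HD{}$. This is exactly where the hypothesis $\Ga_N'\subseteq \Ga_N$ enters: the only interface across which the extension is glued is $\Ga_D'$, and since $v^-=0$ there the extension has no jump and lies in $H^1(\Om)$; moreover its trace on $\Ga_D$ vanishes, because $\Ga_D\cap \pa \Om'\subseteq \Ga_D'$ (a point of $\Ga_N'\subseteq \Ga_N$ cannot lie on $\Ga_D$) while on the rest of $\Ga_D$ one has $\wide{v}\equiv 0$. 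Feeding $\wide{v}\in \HD{}$ into the characterization of $\tau_1(\Om)$ gives $\tau_1(\Om)\int_{\Om'}|v^-|^2\dx\leq \int_{\Om'}|\nabla v^-|^2\dx$, which combined with the Rayleigh bound and $v^-\not\equiv 0$ forces $\tau_1(\Om)\leq \mu$.

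Finally I would close the two cases against the standing hypothesis $\mu\leq \tau_1(\Om)$. In case (i) the strict inequality $\mu<\tau_1(\Om)$ contradicts $\tau_1(\Om)\leq \mu$ at once, so $v^-\equiv 0$ and $v\geq 0$. In case (ii) the equality $\mu=\tau_1(\Om)$ forces every intermediate inequality to be an equality, so $\wide{v}$ is a non-negative minimizer of the Rayleigh quotient over $\HD{}$, hence a first eigenfunction of~\eqref{mixed}; but $\wide{v}$ vanishes on $\Om\setminus \Om'$, a set of positive measure, contradicting the sign-definiteness of the first eigenfunction (equivalently, the strong maximum principle of Proposition~\ref{SMP}). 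Either way $v^-\equiv 0$, i.e. $v\geq 0$ in $\Om'$. I expect the main obstacle to be precisely the function-space bookkeeping in the third step, namely verifying that the zero extension genuinely lies in $\HD{}$ and not merely in $H^1(\Om)$, since the boundary-term sign analysis and the final dichotomy are direct analogues of the Dirichlet case.
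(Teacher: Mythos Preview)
Your proposal is correct and follows essentially the same argument as the paper's proof: test against $v^-$, use the boundary splitting to discard the surface term, extend $v^-$ by zero into $\HD{}$, and derive a contradiction from the variational characterization of $\tau_1(\Om)$ in each case. In fact your justification that the zero extension lies in $\HD{}$ (via the hypothesis $\Ga_N'\subseteq\Ga_N$) is more explicit than the paper's, which simply asserts this step.
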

\begin{proof}
	Suppose $v^-\neq 0$ in $\Om '$. By multiplying~\eqref{eqn1:aux} with $v^-$ and integrating by parts, we obtain that 
	\begin{equation}\label{aux:ibp1}
	-\int _{\Om '}|\nabla v^-|^2\dx
	-\int _{\pa \Om '}\frac{\pa v}{\pa n}v^-\dS
	+\mu \int _{\Om '} |v^-|^2\dx \geq 0.
	\end{equation}
	Since $v\geq 0$ on $\Ga _D'$,	we have $v^-=0$ on $\Ga _D'$. Then we get that
	$\int _{\pa \Om '}\frac{\pa v}{\pa n}v^-\dS \geq 0$, since 
	$\displaystyle \frac{\pa v}{\pa n}\geq 0$ on $\Ga _N'$. Hence from~\eqref{aux:ibp1} we have 
	
	\noindent
	\begin{equation*}
	\int _{\Om '}|\nabla v^-|^2\dx\leq \mu \int _{\Om '} |v^-|^2\dx .
	\end{equation*}
	As $v^-=0$ on $\Ga _D'$ the zero extension (denoted by $\wide{v}$) of $v^-$ to $\Om $ belongs to $H^1_{\Ga _D}(\Om )$. Thus, from the inequality above we have
	\begin{equation}\label{eqn:aux2}
	\displaystyle \int _{\Om }|\nabla \wide{v}|^2\dx \leq \mu \int _{\Om } |\wide{v}|^2\dx .
	\end{equation}
	Therefore, from the variational characterization 
	of $\tau _1(\Om )$ 
	we obtain $\tau _1(\Om )\leq \mu .$
	
	\noindent
	For $\mu < \tau _1(\Om )$, this immediately gives a contradiction, and hence $v\geq 0$ in $\Om '$. On the other hand, for $\mu = \tau _1(\Om )$ and $|\Om \setminus \Om '|>0$, the inequality~\eqref{eqn:aux2} implies that $\wide{v}$ is an eigenfunction corresponding to $\tau _1(\Om )$. This is a contradiction to the strong maximum principle (part (a) of Proposition~\ref{SMP}), as the extension $\wide{v}$ vanishes on the set $\Om \setminus \Om '$ of positive measure in $\Om $. Therefore, we have $v\geq 0$ in $\Om '$.
	%
	%
	%
	%
	%
\end{proof}
%
\begin{corollary}\label{coro 2}
	Let $\Om , \Om ', \tau _1 (\Om ), \mu $ and $v\in C^2(\Om ')\cap C^1(\overline{ \Om '})$ be defined as in Proposition~\ref{aux:wcp}, which satisfies~\eqref{eqn1:aux}. Further, assume that $v\geq 0$ in $\Om '$.	
	\begin{enumerate}[\rm (a)]
		\item If $\frac{\pa v}{\pa n}(x_0)>0$ for some $x_0 \in \Ga '_N$ satisfying the interior sphere condition, then $v>0$ in $\Om '$.
		\item Let $a\in \Ga '_N$ satisfy the interior sphere condition. If $v>0$ in $\Om '$, then $v(a)>0.$
	\end{enumerate}
\end{corollary}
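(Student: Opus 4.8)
The plan is to derive both parts from the strong maximum principle and Hopf's lemma (Proposition~\ref{SMP}) applied to the nonnegative function $v$. The one preliminary adjustment is that Proposition~\ref{SMP} is phrased for $-\De$, whereas \eqref{eqn1:aux} carries the zeroth-order term $-\mu v$ with $\mu$ of possibly either sign. Since $v\ge 0$ in $\Om'$, I would first absorb this term by adding $|\mu|v\ge 0$ to the differential inequality:
\[
-\De v+|\mu|v\ \ge\ -\De v-\mu v\ \ge\ 0 \quad\text{in }\Om'.
\]
Thus $v$ is a nonnegative supersolution of $-\De+|\mu|$, an operator with nonnegative zeroth-order coefficient, for which the strong maximum principle and Hopf's lemma remain valid at a minimum of value $0$ (by the same references behind Proposition~\ref{SMP}). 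I also take $\Om'$ connected, as in the intended applications; in general the statements are understood on the component meeting the relevant boundary point.

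For part~(a), the strong maximum principle supplies the dichotomy that a nonnegative supersolution of $-\De+|\mu|$ on the connected open set $\Om'$ is \emph{either} identically $0$ \emph{or} strictly positive, so it remains only to exclude $v\equiv 0$. This is immediate: if $v$ vanished identically on $\Om'$ then, by continuity up to the boundary, $\frac{\pa v}{\pa n}(x_0)=0$, contradicting the hypothesis $\frac{\pa v}{\pa n}(x_0)>0$ at $x_0\in\Ga_N'$. Hence $v\not\equiv 0$, and the dichotomy forces $v>0$ in $\Om'$.

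For part~(b), I would assume $v>0$ in $\Om'$ and argue by contradiction: suppose $v(a)=0$ at $a\in\Ga_N'$. Then $v$ attains its minimum value $0$ at $a$, we have $v\ge v(a)=0$ throughout $\Om'$, the function $v$ is non-constant (it is strictly positive in $\Om'$), and $a$ satisfies the interior sphere condition by hypothesis. Hopf's lemma (Proposition~\ref{SMP}(b), in the form just noted for $-\De+|\mu|$) then gives $\frac{\pa v}{\pa \nu}(a)<0$ for every outward direction $\nu$ to $\Om'$ at $a$; taking $\nu$ to be the outward unit normal $n$ yields $\frac{\pa v}{\pa n}(a)<0$. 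Since $a\in\Ga_N'$, this contradicts the boundary condition $\frac{\pa v}{\pa n}\ge 0$ on $\Ga_N'$ in~\eqref{eqn1:aux}. Therefore $v(a)>0$.

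The only genuinely delicate point, and the step I expect to be the main obstacle, is the indefinite sign of the zeroth-order coefficient $\mu$: the principles of Proposition~\ref{SMP} may be invoked safely only once one is certain the lower-order term cannot reverse them. The reduction to the nonnegative-coefficient operator $-\De+|\mu|$ — available precisely because $v\ge 0$ in $\Om'$ — removes this difficulty, after which (a) and (b) follow directly from the interior and boundary versions of the principle. No computation beyond what is already established in Proposition~\ref{aux:wcp} is required.
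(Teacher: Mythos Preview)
Your proof is correct and follows the same route as the paper, which simply records that ``the proof follows easily from Hopf's lemma (Proposition~\ref{SMP}).'' Your version is in fact more careful: you explicitly handle the zeroth-order term $-\mu v$ by passing to the operator $-\Delta + |\mu|$ (valid since $v\ge 0$), whereas the paper leaves this implicit---in every application one has $\mu=\tau_1(s)>0$, so $-\Delta v\ge \mu v\ge 0$ and Proposition~\ref{SMP} applies directly.
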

\begin{proof}
	The proof follows easily from Hopf's  lemma (Proposition~\ref{SMP}).
	%
	%
	%
\end{proof}
\section{Symmetry and monotonicity of the eigenfunctions}\label{Symm and Mono}
In this section, we discuss the symmetry and monotonicity properties of the eigenfunctions of~\eqref{eigen} in $\Om _s$. 
Firstly, we discuss the foliated Schwarz symmetry, and secondly we discuss the monotonicity along the affine radial directions. Lastly, we discuss the monotonicity along the axial direction in a subdomain of $\Om _s$.
\subsection{Foliated Schwarz symmetry}\label{fSS-4.1}
We recall that
$\H _* = \{H\in \H : 0 \in \pa H \mbox{ and } -e_1 \in H\}.$
For $H\in \H _*$, there exists $h \in \SN $ with~$h \cdot e_1 > 0$ such that $H
=\{x \in \RN: h \cdot x \leq 0 \}$. Then, the reflection $\si _H$ with respect to $\pa H$ of 
$H\in \H_*$ is given by 

\noindent
\begin{equation*}
\si _H(x)=x \si _H\quad \forall x\in \RN,
\end{equation*}
where $\si _H$ is the matrix $I-2h^T h$.
%
%
Now, we prove the foliated Schwarz symmetry of the first eigenfunctions.
\begin{theorem}\label{thm 0}
	Let $\Om _s \in \A_{R_1 R_0}$ be an annular domain as given in~\eqref{family} with $0 < s< R_1-R_0$. Let $u>0$ be an eigenfunction of~\eqref{eigen} corresponding to the first eigenvalue $\tau _1(s)$ in 
	$\Om _s
	$. Then $u$ has the foliated Schwarz symmetry in $\Om _s$ with respect to $-\R ^+ e_1$. 
\end{theorem}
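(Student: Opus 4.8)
The plan is to deduce the symmetry from the polarization characterization in Proposition~\ref{propo:characterization}(i): a non-negative $u$ has the foliated Schwarz symmetry in $\Om _s$ with respect to $-\R ^+ e_1$ if and only if $u^H = u$ a.e.\ in $\Om _s$ for \emph{every} polarizer $H \in \H _*$. Thus it suffices to fix an arbitrary $H \in \H _*$ and prove the single identity $u^H = u$.

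The key idea is that polarization interacts perfectly with the variational characterization of $\tau _1(s)$. Normalizing so that $\|u\|_2 = 1$, the eigenfunction $u$ is a minimizer of $J$ over $\S \cap \HD{s}$ with $J(u) = \tau _1(s)$. Since $u \geq 0$ vanishes on $\Ga _D$, Proposition~\ref{propo:polarizer} guarantees that $u^H \in \HD{s}$ (it again vanishes on $\Ga _D$) and, crucially, that polarization preserves both norms exactly: $\|u^H\|_2 = \|u\|_2 = 1$ and $\|\nabla u^H\|_2 = \|\nabla u\|_2$. Hence $u^H \in \S \cap \HD{s}$ with $J(u^H) = J(u) = \tau _1(s)$, so $u^H$ is itself a minimizer, and therefore a first eigenfunction of~\eqref{eigen}.

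To conclude, I would invoke the simplicity of $\tau _1(s)$. The polarization of a non-negative function is non-negative, so $u^H \geq 0$; and since the eigenspace associated with $\tau _1(s)$ is one-dimensional, $u^H = c\,u$ for some $c \in \R$. Positivity of $u$ together with $u^H \geq 0$ forces $c \geq 0$, while $\|u^H\|_2 = \|u\|_2 = 1$ forces $c = 1$, so $u^H = u$. As $H \in \H _*$ was arbitrary, Proposition~\ref{propo:characterization}(i) yields the claim.

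The decisive ingredient is the \emph{exact} norm preservation in Proposition~\ref{propo:polarizer}: because polarization merely redistributes the values of $u$ between $x$ and $\si _H(x)$, it leaves the Rayleigh quotient completely unchanged, so neither a P\'olya--Szeg\H{o}-type inequality nor a delicate equality-case analysis is required, and the usual dichotomy ``$u^H=u$ or $u^H = u\circ\si _H$'' is bypassed since simplicity pins down $u^H = u$ directly. The only point that genuinely needs the geometry of the problem is the assertion that $u^H$ still satisfies the Dirichlet condition on $\Ga _D = \pa B_{R_0}(se_1)$; this is exactly where the restriction $H \in \H _*$ and the non-negativity of $u$ are used, via Remark~\ref{rmk:fSS}, which ensures that the zero extension $\wide{u}^H$ vanishes throughout $B_{R_0}(se_1)$. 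Once this is in place, the standard identification of normalized Rayleigh-quotient minimizers with (smooth, by the regularity proposition) first eigenfunctions, combined with simplicity, completes the proof.
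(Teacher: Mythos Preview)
Your proof is correct and follows essentially the same approach as the paper: invoke Proposition~\ref{propo:polarizer} to see that $u^H$ is again a normalized first eigenfunction, then use the simplicity of $\tau_1(s)$ together with non-negativity to conclude $u^H=u$ for every $H\in\H_*$, and finish via Proposition~\ref{propo:characterization}(i). Your write-up is slightly more detailed (you spell out why the constant must be $1$ and why $u^H$ retains the Dirichlet condition via Remark~\ref{rmk:fSS}), but the argument is the same.
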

\begin{proof}
	We observe that, from Proposition~\ref{propo:polarizer}, the $L^2$-norm and the Dirichlet energy of a first eigenfunction $u$ of~\eqref{eigen} are invariant under the polarization, i.e.,
	$\left\|u^H\right\|_2 =\left\|u\right\|_2,\ \left\|\nabla u^H\right\|_2 =\left\|\nabla u\right\|_2.$
	Therefore, by the variational characterization of $\tau _1 (s)$, $u^H$ is also a first eigenfunction. Since both $u$ and $u^H$ are non-negative and have same $L^2$-norm, and by the simplicity of~$\tau _1(s)$ we obtain that $u^H=u$ in $\Om $ for any $H\in \H _*$. Therefore, by the characterization given in Proposition~\ref{propo:characterization}, $u$ has the foliated Schwarz symmetry in $\Om _s$ with respect to $-\R ^+ e_1$.
\end{proof}
%
\begin{remark}
	From the characterization in Remark~\ref{rmk:fSS1}, the foliated Schwarz symmetry of a positive first eigenfunction $u$ is equivalent to $\frac{\pa u}{\pa \T}(r,\T)\leq 0$, for $0<r\leq R_1$ and for all of the possible values of $\T$ in $\Om _s$.
\end{remark}
\par 
The next proposition gives an another proof for the foliated Schawarz symmetry of the first eigenfunctions of~\eqref{eigen}. More importantly, this gives a better insight on the geometry of the eigenfunctions. Let $H\in \H^*$ be a fixed polarizer, then for any set $A\subseteq \RN,$ we denote $A \cap H$ by $A^+$ and $A\cap H\cm $ by $A^-$.
\begin{proposition}\label{propo:fss}
	Let $\Om _s \in \A_{R_1 R_0}$ be an annular domain as given in~\eqref{family} with $0 < s< R_1-R_0$. Let $u>0$ be an eigenfunction of~\eqref{eigen} corresponding to the first eigenvalue $\tau _1(s)$ in 
	$\Om _s$. Then for $H\in \H_*$,  
	%
	%
	
	\noindent
	\begin{equation*}
	u(x)<u(\si _H(x))\mbox{ for }x\in \overline{\Om _s^-}\setminus \pa H,
	\end{equation*}
	and for an inward normal $h$  to $H,$ \noindent
	\begin{equation*}
	\frac{\pa u}{\pa h}(a)>0\mbox{ for }a\in 
	\overline{\Om _s} \cap \pa H.
	\end{equation*}
	%
	%
	%
\end{proposition}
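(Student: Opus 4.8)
The plan is to prove the strict ordering $u(x) < u(\sigma_H(x))$ for $x \in \overline{\Om_s^-}\setminus \pa H$ by applying the comparison principle (Proposition~\ref{aux:wcp}) to the function $v \dq u\circ\sigma_H - u$ on the reflected half-domain $\Om_s^- = \Om_s \cap H\cm$. First I would set $w(x) \dq u(\sigma_H(x))$ and observe that, since $\sigma_H$ is an isometry and $-\De$ is invariant under orthogonal transformations, $w$ also solves $-\De w = \tau_1(s)\, w$ in $\sigma_H(\Om_s)\cap \Om_s$; hence $v = w - u$ satisfies $-\De v = \tau_1(s)\, v$, i.e.\ $-\De v - \tau_1(s) v = 0 \geq 0$, in the subdomain $\Om' \dq \Om_s^-$. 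From Theorem~\ref{thm 0} we already know $u^H = u$, which is exactly the statement $v \geq 0$ in $\Om_s^-$; so the bulk of the work is to upgrade this weak (non-strict, a.e.) inequality to the pointwise strict inequality $v>0$ away from $\pa H$.

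The key step is to verify the boundary hypotheses of Proposition~\ref{aux:wcp} (and its Corollary~\ref{coro 2}) on $\Om' = \Om_s^-$, whose boundary splits as $\pa\Om' = \Ga_D' \sqcup \Ga_N'$ with a Dirichlet part and a Neumann part. On the portion of $\pa\Om'$ lying on the reflection hyperplane $\pa H$, the two points $x$ and $\sigma_H(x)$ coincide, so $v = 0$ there; on the portion coming from $\Ga_D = \pa B_{R_0}(se_1)$ the eigenfunction $u$ vanishes while $w = u\circ\sigma_H \geq 0$ (using Remark~\ref{rmk:fSS}, since $H\in\H_*$ places the reflected inner ball appropriately), giving $v \geq 0$ on $\Ga_D'$. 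On the Neumann part $\Ga_N' \subseteq \Ga_N = \pa B_{R_1}(0)$, the outward normal derivative of $u$ vanishes, and because $\sigma_H$ maps the sphere $\pa B_{R_1}(0)$ to itself and preserves the normal direction up to reflection, the normal derivative of $w$ also vanishes there, so $\frac{\pa v}{\pa n} = 0 \geq 0$ on $\Ga_N'$. Thus $v$ satisfies \eqref{eqn1:aux} with $\mu = \tau_1(s)$, and since the complement $\Om_s\setminus\Om_s^-$ has positive measure, case~(ii) of Proposition~\ref{aux:wcp} applies to re-confirm $v\geq 0$. The strong maximum principle (Proposition~\ref{SMP}(a)) then forces either $v \equiv 0$ or $v > 0$ in $\Om_s^-$; since $v\equiv 0$ would make $u$ symmetric across every $\pa H$ simultaneously and hence radial about $0$, contradicting the Dirichlet hole centered at $se_1$ with $s>0$, we conclude $v > 0$ strictly in the interior, and Hopf's lemma together with Corollary~\ref{coro 2} extends positivity up to $\overline{\Om_s^-}\setminus\pa H$.

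For the gradient statement $\frac{\pa u}{\pa h}(a) > 0$ at points $a \in \overline{\Om_s}\cap\pa H$, I would note that along $\pa H$ the inward normal $h$ is precisely the direction in which $x$ moves from the hyperplane into the half-space $H$ where $u$ is larger. Writing the directional derivative as a limit $\frac{\pa u}{\pa h}(a) = \lim_{t\to 0^+} \frac{u(a+th) - u(a)}{t}$ and using that $a-th = \sigma_H(a+th)$ for $a \in \pa H$, the strict inequality $u(a+th) > u(\sigma_H(a+th)) = u(a-th)$ just established gives $u(a+th) - u(a) > u(a) - u(a-th)$; combined with differentiability of $u$ (from the regularity $u\in\C^1(\overline{\Om_s})$) this yields $\frac{\pa u}{\pa h}(a) \geq 0$, and Hopf's lemma applied to $v$ at the hyperplane boundary upgrades this to a strict inequality. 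I expect the main obstacle to be the careful bookkeeping at the boundary pieces of $\Om_s^-$ — in particular confirming that the reflected inner ball $\sigma_H(B_{R_0}(se_1))$ sits entirely on the correct side so that $w \geq 0$ holds on all of $\Ga_D'$, and that the Neumann condition genuinely transfers under the reflection on the curved outer sphere — since these geometric facts about how $\sigma_H$ interacts with the non-concentric annulus are precisely what make the mixed boundary comparison principle applicable here.
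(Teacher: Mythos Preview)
Your overall strategy---define $v = u\circ\sigma_H - u$ on $\Om_s^-$, verify the hypotheses of Proposition~\ref{aux:wcp}, apply the strong maximum principle, and finish with Hopf's lemma---is exactly the paper's approach. Two concrete gaps, however, need to be filled.

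First, the step ruling out $v\equiv 0$ is wrong as written: $v\equiv 0$ for a \emph{fixed} $H\in\H_*$ only says $u$ is symmetric across that one hyperplane, not ``across every $\partial H$ simultaneously'', so no radiality contradiction follows. The correct (and simpler) observation is that $v$ is \emph{strictly} positive on $\Ga_D^- = \Ga_D\cap H\cm$, not merely $\geq 0$: for $x\in\Ga_D^-$ one has $u(x)=0$, while $\sigma_H(x)$ lies in the open set $\Om_s$ (reflection through $\partial H$ moves points of $\partial B_{R_0}(se_1)\cap H\cm$ strictly farther from the center $se_1\in H\cm$), so $u(\sigma_H(x))>0$. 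This already gives $v\not\equiv 0$ and simultaneously delivers the strict inequality on $\Ga_D^-$ that the conclusion $u(x)<u(\sigma_H(x))$ on all of $\overline{\Om_s^-}\setminus\partial H$ requires.

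Second, the gradient assertion $\frac{\partial u}{\partial h}(a)>0$ is claimed for all $a\in\overline{\Om_s}\cap\partial H$, which includes the \emph{corner} points $a\in\Ga_N\cap\partial H$ and $a\in\Ga_D\cap\partial H$. At those points $\Om_s^-$ fails the interior sphere condition, so ``Hopf's lemma applied to $v$ at the hyperplane boundary'' is unavailable. The paper treats the two corner types differently: at $\Ga_N\cap\partial H$ it invokes a tangential-direction Hopf lemma at corners (Proposition~\ref{propo:Hopf}, proved in Appendix~\ref{MPatCorners}), a nontrivial ingredient you do not mention; at $\Ga_D\cap\partial H$ it bypasses $v$ entirely and applies ordinary Hopf to $u$ in the full domain $\Om_s$, after checking that $h$ is an inward direction to $\Om_s$ there (since $(a-se_1)\cdot h=-sh_1>0$). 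Finally, even at interior points of $\partial H$, your limit argument yields only $\frac{\partial u}{\partial h}(a)\geq 0$ (the displayed inequality $u(a+th)-u(a)>u(a)-u(a-th)$ does not follow from $u(a+th)>u(a-th)$); you still need the identity $\frac{\partial v}{\partial h}(a)=-2\,\nabla u(a)\cdot h$ on $\partial H$ (from $\sigma_H h=-h$, $\sigma_H a=a$) to convert Hopf on $v$ into the strict sign for $u$.
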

\begin{proof}
	%
	Let $w(x)\dq u(\si _H(x)) - u(x)$  for  $x\in \overline{\Om _s ^-}$. 
	Then $w\in H^1(\Om _s^-)$
	and it satisfies the following boundary value problem: 
	
	\noindent
	\begin{equation*}
	\left.
	\begin{aligned}
	-\De w - \tau _1(s) w &=0 \mbox{ in } \Om _s^-,\\
	w = 0  \mbox{ on } \overline{ \Om _s}\cap \pa H,&\;\;
	w> 0 \mbox{ on } \Ga _D^-,\\
	\frac{\pa w}{\pa n} &= 0  \mbox{ on } \Ga _N^-.
	\end{aligned}\right\}
	\end{equation*}
	%
	%
	Now, from Proposition~\ref{aux:wcp} we obtain $w\geq 0$ in $\Om _s^-$, and then the strong maximum principle (Proposition~\ref{SMP}) implies $w>0$ in $\Om _s^-$. Therefore,  by Proposition~\ref{propo:Hopf}, $w>0$ on $\Om _s ^-\cup \Ga _N^-$. Hence $u(x)<u(\si _H(x))$ for $x\in \overline{\Om _s^-}\setminus \pa H$.
	Let $h$ be an inward normal to $H$ and $a\in \overline{\Om _s}\cap \pa H \subset  \pa(\Om _s^-).$ Observe that $\Om _s^-$  satisfies the interior sphere condition at $a \in \Om _s \cap \pa H$, $h$ is an outward normal to $\Om _s^-$ at $a$ and $w(a)=0$. Thus by Hopf's lemma, we have $\frac{\pa w}{\pa h}(a)<0$ for $a\in \Om _s \cap \pa H$ and for an inward normal $h$ to $H$. On the other hand, $h$ is tangential to $\pa \Om _s$ at $a \in \Ga _N \cap \pa H.$ Therefore, Proposition~\ref{propo:Hopf} implies that $\frac{\pa w}{\pa h}(a)<0$ for $a\in \pa \Ga _N \cap \pa H$. Notice that, $\si _H(h)=-h$ and $\si _H(a)=a$ for~$a\in \big(\Om _s \cup \Ga _N\big)\cap \pa H$. Thus 
	
	\noindent
	\begin{equation*}
	\frac{\pa w}{\pa h}(a)= \nabla u(\si _H(a))\si _H \cdot h -\nabla u(a)\cdot h = \nabla u(a) \cdot (\si _H(h)-h)=-2\nabla u(a)\cdot h .
	\end{equation*}
	Hence, we have 
	
	\noindent
	\begin{equation*}
	\frac{\pa u}{\pa h} =-\frac{1}{2}\frac{\pa w}{\pa h}>0 \mbox{ on } \big(\Om _s \cup \Ga _N \big) \cap \pa H.
	\end{equation*}
	At $a\in \Ga _D \cap \pa H$, we have $(a-s e_1)\cdot h =-sh_1>0,$ since $h_1<0$ for an inward normal $h$ to $H$. So $h$ is an inward direction to $\Om _s$ at $a$. Therefore, by Hopf's lemma, we obtain
	 
	\noindent
	\begin{equation*}
	  \frac{\pa u}{\pa h}(a)>0 \text{ for } a\in \Ga _D
	  \cap \pa H.
	\end{equation*}
	This completes the proof.
	%
\end{proof}
%
%
\begin{proposition}\label{grad}
	Let $\Om _s \in \A_{R_1 R_0}$ be an annular domain as given in~\eqref{family} with $0<s<R_1-R_0$. Let $u>0$ be an eigenfunction of~\eqref{eigen} corresponding to the first eigenvalue $\tau _1(s)$ in $\Om _s$. Then 
	
	\noindent
	\begin{equation*}
	\nabla u (x)\neq 0\mbox{ for }x\in \overline{\Om }_s \setminus \R e_1.
	\end{equation*}
\end{proposition}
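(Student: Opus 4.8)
The plan is to deduce this from Proposition~\ref{propo:fss}, which already carries all the analytic content. That proposition produces, for every polarizer $H\in \H_*$ and every point $a\in \overline{\Om_s}\cap \pa H$, a specific direction --- the inward normal $h$ to $H$ --- along which $\frac{\pa u}{\pa h}(a)=\nabla u(a)\cdot h>0$. In particular $\nabla u(a)\neq 0$ at every such $a$. Hence the whole statement reduces to a purely geometric claim: \emph{every} $x\in \overline{\Om}_s\setminus \R e_1$ lies on the boundary hyperplane $\pa H$ of \emph{some} $H\in \H_*$. Once this is shown, applying Proposition~\ref{propo:fss} at $a=x$ finishes the argument.

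The key step is therefore the construction of a suitable polarizer through a given off-axis point. Recall that $H\in \H_*$ exactly when $H=\{y\in\RN: h'\cdot y\leq 0\}$ for some unit vector $h'$ with $h'\cdot e_1> 0$ (this guarantees $0\in\pa H$ and $-e_1\in H$), and for such $H$ one has $x\in\pa H$ iff $h'\cdot x=0$. So I need a unit vector orthogonal to $x$ with strictly positive first coordinate. Taking the component of $e_1$ orthogonal to $x$, namely $h_0=e_1-\frac{e_1\cdot x}{|x|^2}\,x$, I get $h_0\cdot x=0$ and $h_0\cdot e_1=\frac{|x|^2-(e_1\cdot x)^2}{|x|^2}$, which is strictly positive precisely because $x$ is not parallel to $e_1$ (Cauchy--Schwarz, with equality iff $x\in\R e_1$). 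Normalizing $h'=h_0/|h_0|$ then gives a polarizer $H\in\H_*$ with $x\in\pa H$. Note the division by $|x|^2$ is legitimate since $x\neq 0$, as $0\in\R e_1$ is excluded.

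With $x\in\overline{\Om_s}\cap\pa H$ in hand, Proposition~\ref{propo:fss} yields $\frac{\pa u}{\pa h}(x)=\nabla u(x)\cdot h>0$ for the inward normal $h=-h'$ to $H$, and a nonzero directional derivative forces $\nabla u(x)\neq 0$. I do not anticipate a genuine obstacle here: the construction degenerates exactly on the axis $\R e_1$ (where no hyperplane through $0$ containing $x$ can have a normal with positive $e_1$-component), which is consistent with $\R e_1$ being excluded from the statement. The only point demanding care is the orientation bookkeeping between the outward normal $h'$ used to define $H$ and the inward normal $h=-h'$ appearing in Proposition~\ref{propo:fss}; in either sign convention the conclusion $\nabla u(x)\neq 0$ is immediate.
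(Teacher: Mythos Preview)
Your proposal is correct and follows essentially the same approach as the paper: both reduce the claim to the geometric fact that every off-axis point $x$ lies on $\pa H$ for some $H\in\H_*$, and then invoke Proposition~\ref{propo:fss}. The only cosmetic difference is the choice of normal vector---the paper picks a coordinate $x_k\neq 0$ with $k\geq 2$, reduces (via axial symmetry) to $x_k<0$, and takes the two-coordinate vector $\eta=-x_k e_1+x_1 e_k$, whereas you use the coordinate-free orthogonal projection $h_0=e_1-\frac{x_1}{|x|^2}x$; both constructions yield a normal with strictly positive $e_1$-component precisely because $x\notin\R e_1$.
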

\begin{proof}
	%
	%
	\noindent Let $x\in \overline{\Om }_s\setminus \R e_1$. Then $x_k \neq 0$ for some $k\geq 2$, where $x=(x_1,x_2,\dots ,x_d)$. From the axial symmetry of $u$, we can assume that $x_k<0$.
	\noindent Now, we consider $\eta =-x_k e_1+x_1 e_k$ and the 
	polarizer $H\dq \{x\in \RN : x\cdot \eta \leq 0 \},$
	%
	so that $\eta $ is an outward normal to $H$. Observe that $H\in \H_*$ and $x\in \overline{\Om }_s\cap \pa H$. From Proposition~\ref{propo:fss}, we get that $\frac{\pa u}{\pa \eta }(x)<0 $. Therefore $\nabla u(x)\neq 0$.
\end{proof}
Observe that, the annular domain $\Om _s=B_R(0)\setminus \overline{ B_r (s e_1)}$ is symmetric with respect to any plane containing the $e_1$-axis. Since the first eigenvalue $\tau _1(s)$ of~\eqref{eigen} is simple, the corresponding eigenfunctions inherit the same symmetry, as stated below. Also see~\cite[Proposition~A.3]{AAK}.
\begin{proposition}\label{propo:axial0}
	Let $\Om _s \in \A_{R_1 R_0}$ be an annular domain as given in~\eqref{family}. Let $u$ be an eigenfunction of~\eqref{eigen} corresponding to the first eigenvalue $\tau _1(s)$ in $\Om _s$. If $\eta \in \SN \setminus \{0\}$ such that $\eta \cdot e_1=0$, then $u$ is symmetric with respect to the hyperplane $H_\eta \dq \{x\in \RN : x\cdot \eta =0 \}$. 
	Furthermore, $\nabla u (x)\cdot \eta =0$ for $x\in \overline{\Om _s}\cap H_\eta $.
\end{proposition}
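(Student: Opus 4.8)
The plan is to exploit the simplicity of the first eigenvalue $\tau_1(s)$ together with the reflection symmetry of the domain $\Om_s$ across the hyperplane $H_\eta=\{x\cdot\eta=0\}$. Since $\eta\cdot e_1=0$, the reflection $\si_{H_\eta}(x)=x-2(x\cdot\eta)\eta$ maps $B_{R_1}(0)$ to itself and maps $B_{R_0}(se_1)$ to itself (because $se_1$ is fixed by this reflection, as $se_1\cdot\eta=0$). Hence $\si_{H_\eta}$ is a symmetry of the whole annular domain $\Om_s$, preserving both the Neumann boundary $\Ga_N=\pa B_{R_1}(0)$ and the Dirichlet boundary $\Ga_D=\pa B_{R_0}(se_1)$.

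First I would define $v(x)\dq u(\si_{H_\eta}(x))$ and observe that $v\in\HD{s}$, since the reflection preserves $\Om_s$ and its boundary decomposition, so the trace of $v$ still vanishes on $\Ga_D$. Because $-\De$ commutes with the orthogonal change of variables given by a reflection, $v$ solves the same weak formulation of~\eqref{eigen} with the same eigenvalue $\tau_1(s)$; that is, $v$ is again a first eigenfunction. By the simplicity of $\tau_1(s)$ (stated in the introduction and proved in \cite[Appendix A]{AAK}), the eigenspace is one-dimensional, so $v=c\,u$ for some constant $c$. Taking $L^2$-norms gives $|c|=1$, and since both $u$ and $v=u\circ\si_{H_\eta}$ have the same (constant) sign, we must have $c=1$. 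Thus $u(\si_{H_\eta}(x))=u(x)$ for all $x$, which is exactly the claimed symmetry of $u$ across $H_\eta$.

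The gradient statement then follows by differentiating the symmetry identity. From $u(x)=u(\si_{H_\eta}(x))$ and the chain rule, $\nabla u(x)=\nabla u(\si_{H_\eta}(x))\,\si_{H_\eta}$, where $\si_{H_\eta}=I-2\eta^T\eta$ is the reflection matrix. For $x\in\overline{\Om_s}\cap H_\eta$ we have $\si_{H_\eta}(x)=x$, so the identity becomes $\nabla u(x)=\nabla u(x)(I-2\eta^T\eta)$, giving $\nabla u(x)\cdot\eta\,(2\eta)=0$, hence $\nabla u(x)\cdot\eta=0$. (Equivalently, the directional derivative in the direction $\eta$ tangent to the symmetry plane must vanish at fixed points of the reflection.)

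I expect the only mild subtlety to be the regularity needed to make the pointwise chain-rule argument rigorous up to the boundary; this is supplied by the regularity result stated earlier, namely that $u\in\C^2(\Om_s)\cap\C^1(\overline{\Om_s})$, so that $\nabla u$ is defined and continuous on $\overline{\Om_s}$ and the differentiation of the symmetry relation is justified on $\overline{\Om_s}\cap H_\eta$. Everything else reduces to the standard fact that simplicity of an eigenvalue forces eigenfunctions to inherit every symmetry of the operator and the domain, so the main content is simply verifying that $\si_{H_\eta}$ genuinely preserves the mixed boundary structure of $\Om_s$, which is immediate from $se_1\in H_\eta$.
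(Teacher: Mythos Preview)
Your argument is correct, and it takes a somewhat different route from the paper's. The paper defines $w(x)=u(\si_\eta(x))-u(x)$ on the half-domain $\Om_s^-$, checks that $w$ satisfies the mixed boundary value problem $-\De w-\tau_1(s)w=0$ with $w=0$ on the Dirichlet part and $\frac{\pa w}{\pa n}=0$ on the Neumann part, and then applies the weak maximum principle for the Zaremba problem (Proposition~\ref{aux:wcp}) to obtain $w\ge 0$; since $-w$ satisfies the same problem, it concludes $w\equiv 0$. The gradient identity is then read off from $\nabla u\cdot\eta=-\tfrac12\nabla w\cdot\eta=0$ on $\overline{\Om_s}\cap H_\eta$.

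Your approach bypasses the maximum-principle machinery entirely and uses only the simplicity of $\tau_1(s)$ together with the constant sign of the first eigenfunction to force $u\circ\si_{H_\eta}=u$ directly; this is exactly the classical ``simplicity implies inherited symmetry'' argument (indeed the paper points to \cite[Proposition~A.3]{AAK} for this viewpoint). The paper's route has the advantage of methodological uniformity: it mirrors the argument of Proposition~\ref{propo:fss}, where the reflection does \emph{not} preserve $\Om_s$ and simplicity alone cannot be invoked. Your route has the advantage of directness and minimal hypotheses for this particular statement.
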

\begin{proof}
	Consider $w(x)=u(\si _\eta (x))-u(x)$ for $x\in \overline{\Om _s^-}$, where $\si _\eta $ is the reflection with respect to the hyperplane $H_\eta $. Then $w$ satisfies
	the following boundary value problem: 
	
	\noindent
	\begin{equation*}
	\left.
	\begin{aligned}
	-\De w - \tau _1(s) w =0 \mbox{ in } \Om _s^-,\\
	w = 0  \mbox{ on } (\overline{ \Om _s}\cap \pa H) \cap \Ga _D^-,\\
	\frac{\pa w}{\pa n} = 0  \mbox{ on } \Ga _N^-.
	\end{aligned}\right\}
	\end{equation*}
	From Proposition~\ref{aux:wcp}, we get that $w\geq 0$ in $\Om _s^-$. Since $-w$ also satisfies the boundary value problem above, we conclude that $w\equiv 0$ in $\overline{\Om _s^-}$. Now, by the same arguments as given in Proposition~\ref{propo:fss}, we see that $\nabla u(x)\cdot \eta = -\frac{1}{2}\nabla w(x)\cdot \eta =0$ for $x\in \overline{ \Om _s}\cap \pa H$.
\end{proof}
%
Now, we prove the monotonicity of the first eigenfunctions along the tangential directions in the following lemma (a corollary of Theorem~\ref{thm 0}) regarding. For $a\in \RN\setminus \{0\}$, let $T_a\dq \{\eta \in \RN \setminus \{0\} : \eta \cdot a=0\}$. Any $\eta \in T_a$ is called a tangential direction at $a$.
\begin{lemma}\label{coro:tan2}
	Let $\Om _s\in \A_{R_0 R_1}$ be an annualar domain as given in~\eqref{family} with $0<s<R_1-R_0$. Let $u>0$ be an eigenfunction of \eqref{eigen} corresponding to the first eigenvalue $\tau _1(s)$ in $\Om _s$. Let~$a\in \overline{\Om }_s\setminus \R e_1$ and $\eta \in T_a$. Then
	
	\noindent
	\begin{equation*}
	\nabla u(a)\cdot \eta >0 \mbox{ if and only if } \eta \cdot e_1<0.
	\end{equation*}
\end{lemma}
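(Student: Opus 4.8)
The plan is to deduce the whole equivalence from Proposition~\ref{propo:fss} by attaching to each admissible tangential direction a polarizer whose boundary hyperplane passes through $a$ and whose inward normal is that direction. First I would prove the implication $\eta\cdot e_1<0\Rightarrow \nabla u(a)\cdot\eta>0$. Given $a\in\overline{\Om}_s\setminus\R e_1$ and $\eta\in T_a$ with $\eta\cdot e_1<0$, set $\hat\eta=\eta/|\eta|$ and consider the half-space $H=\{x\in\RN:\hat\eta\cdot x\geq 0\}=\{x\in\RN:(-\hat\eta)\cdot x\leq 0\}$. One checks directly that $H\in\H_*$: its boundary hyperplane contains the origin, its outward normal $-\hat\eta$ satisfies $(-\hat\eta)\cdot e_1>0$, and $-e_1\in H$ because $\hat\eta\cdot(-e_1)=-\hat\eta\cdot e_1>0$. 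Moreover $\hat\eta\cdot a=0$ (since $\eta\in T_a$), so $a\in\overline{\Om}_s\cap\pa H$, and $\hat\eta$ is precisely the inward normal to $H$. Hence Proposition~\ref{propo:fss} yields $\nabla u(a)\cdot\hat\eta>0$, and therefore $\nabla u(a)\cdot\eta>0$.

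For the converse I would argue by contraposition, establishing $\eta\cdot e_1\geq 0\Rightarrow\nabla u(a)\cdot\eta\leq 0$ in two cases. If $\eta\cdot e_1>0$, then $-\eta\in T_a$ with $(-\eta)\cdot e_1<0$, so the first part applied to $-\eta$ gives $\nabla u(a)\cdot(-\eta)>0$, that is $\nabla u(a)\cdot\eta<0$. If $\eta\cdot e_1=0$, then $\eta\cdot a=0$ places $a$ on the symmetry hyperplane $H_\eta=\{x:x\cdot\eta=0\}$, so (after normalizing $\eta$) the axial symmetry recorded in Proposition~\ref{propo:axial0} gives $\nabla u(a)\cdot\eta=0$. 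In both cases $\nabla u(a)\cdot\eta\leq 0$, which is the required contrapositive; combined with the first implication this gives the stated equivalence.

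The only real care needed is the bookkeeping of orientations: matching the sign convention of Proposition~\ref{propo:fss}, where the inward normal $h$ to a polarizer $H\in\H_*$ has $h\cdot e_1<0$, to the given $\eta$, and verifying that the constructed $H$ indeed lies in $\H_*$ with $a\in\pa H$. I do not expect any genuine analytic obstacle here, since the strict positivity of the derivative along directions tilted toward $-e_1$ is already encoded in Proposition~\ref{propo:fss}, while the borderline directions orthogonal to $e_1$ are handled by the axial symmetry of Proposition~\ref{propo:axial0}; the hypothesis $a\notin\R e_1$ only serves to exclude the degenerate situation in which every tangential direction is orthogonal to $e_1$ and the characterization becomes vacuous.
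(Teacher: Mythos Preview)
Your proposal is correct and follows essentially the same approach as the paper's own proof: both construct the polarizer $H=\{x:-\eta\cdot x\leq 0\}\in\H_*$ with $a\in\pa H$ and invoke Proposition~\ref{propo:fss} for the forward direction, then handle the converse by reducing the case $\eta\cdot e_1>0$ to the forward implication applied to $-\eta$ and the case $\eta\cdot e_1=0$ to Proposition~\ref{propo:axial0}. The only cosmetic differences are your explicit normalization of $\eta$ and your phrasing via contraposition rather than contradiction.
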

\begin{proof}
	First assume that $\eta \cdot e_1<0$. 
	We consider the polarizer:
	$H=\{x\in \RN : -\eta \cdot x\leq 0\}.$ Clearly $a\in \pa H$ and since $\eta \cdot e_1<0,$ we have $H\in \H _*$. Also observe that $-\eta $ is an outward direction to
	$H$ at $a\in \pa H$.
	So, from Proposition~\ref{propo:fss}, we get that
	$\nabla u(a)\cdot \eta >0.$

	\noindent Conversely, assume that $\nabla u(a)\cdot \eta >0$. If possible, let $\eta \cdot e_1 \geq 0$. 
	If (i) $-\eta \cdot e_1 <0$, then the arguments given above implies $\nabla u(a)\cdot (-\eta )> 0$, or (ii) $-\eta \cdot e_1 =0$, Proposition~\ref{propo:axial0} gives $\nabla u(a)\cdot (-\eta )=0$. In both the cases we get contradictions to the assumption that $\nabla u(a)\cdot \eta >0$. Thus $\eta \cdot e_1 <0$. 
\end{proof}
\begin{remark}\label{rmk:fSS2}
	From the characterization in Remark~\ref{rmk:fSS1}, Lemma~\ref{coro:tan2} is equivalent to $\frac{\pa u}{\pa \T}(r,\T)<0$ for $0<r\leq R_1$ and for all of the possible values of $\T$ in $\Om_s$, when we consider $u=u(r,\T)$ as a function of $r$ and $\T$.
\end{remark}
\begin{proposition}\label{propo:axial}
	Let $\Om _s\in \A_{R_0 R_1}$ be an annualar domain as given in~\eqref{family} with $0<s<R_1-R_0$. Let $u>0$ be an eigenfunction of \eqref{eigen} corresponding to the first eigenvalue $\tau _1(s)$ in $\Om _s$.
	Then 
	
	\noindent
	\begin{equation*}
	\frac{\pa u}{\pa x_1}<0 \mbox{ on } \Ga _N\setminus \{\pm R_1 e_1\}.
	\end{equation*}
\end{proposition}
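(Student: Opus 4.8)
The plan is to reduce the statement to the tangential monotonicity already established in Lemma~\ref{coro:tan2}, using the Neumann boundary condition to kill the radial part of $e_1$. First I would fix $x \in \Ga_N \setminus \{\pm R_1 e_1\}$ and recall that on $\Ga_N = \pa B_{R_1}(0)$ the outward unit normal to $\Om_s$ is $n = x/R_1$. The Neumann condition $\frac{\pa u}{\pa n}(x) = 0$ then reads $\nabla u(x) \cdot x = 0$; that is, $\nabla u(x)$ is tangent to the outer sphere at $x$.

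Next I would split $e_1$ into its radial and tangential parts at $x$, writing $e_1 = \frac{x_1}{R_1^2}\,x + \eta$ with $\eta \dq e_1 - \frac{x_1}{R_1^2}\,x$. A direct check gives $\eta \cdot x = x_1 - \frac{x_1}{R_1^2}R_1^2 = 0$, so $\eta \in T_x$ is a tangential direction, and $\eta \neq 0$ precisely because $x \neq \pm R_1 e_1$. Pairing with $\nabla u(x)$ and using $\nabla u(x)\cdot x = 0$ yields
\begin{equation*}
\frac{\pa u}{\pa x_1}(x) = \nabla u(x)\cdot e_1 = \frac{x_1}{R_1^2}\,\big(\nabla u(x)\cdot x\big) + \nabla u(x)\cdot \eta = \nabla u(x)\cdot \eta,
\end{equation*}
so the sign of $\frac{\pa u}{\pa x_1}(x)$ is exactly the sign of $\nabla u(x)\cdot \eta$.

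It then remains to compute $\eta \cdot e_1 = 1 - \frac{x_1^2}{R_1^2} = \frac{R_1^2 - x_1^2}{R_1^2}$, which is strictly positive since $|x| = R_1$ and $x \neq \pm R_1 e_1$ force $x_1^2 < R_1^2$. As $x \in \Ga_N \setminus \{\pm R_1 e_1\}$ lies in $\overline{\Om}_s \setminus \R e_1$, Lemma~\ref{coro:tan2} applies. Since its characterization reads $\nabla u(x)\cdot \eta > 0 \iff \eta \cdot e_1 < 0$, I would invoke it for the tangential direction $-\eta$ (which is also in $T_x$) to get $\nabla u(x)\cdot(-\eta) > 0$, i.e.\ $\nabla u(x)\cdot \eta < 0$. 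Combined with the identity above, this gives $\frac{\pa u}{\pa x_1}(x) < 0$.

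The argument is short, so the points requiring care are matters of bookkeeping rather than substance: I must confirm that $x \notin \R e_1$ (which is exactly the exclusion of the two poles), that $\eta \neq 0$, and above all that the sign is read off correctly. The subtlety is that Lemma~\ref{coro:tan2} characterizes \emph{positivity} of the tangential derivative, so the negativity we want must be extracted by applying it to $-\eta$ (equivalently, it is driven by $\eta \cdot e_1 > 0$). No maximum principle beyond what is already packaged into Lemma~\ref{coro:tan2} is needed.
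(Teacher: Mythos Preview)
Your proof is correct and uses the same two ingredients as the paper's proof: the Neumann condition on $\Ga_N$ and Lemma~\ref{coro:tan2}. The difference lies only in the choice of tangential direction to which the lemma is applied. You project $e_1$ onto the tangent space at $x$, obtaining $\eta = e_1 - \frac{x_1}{R_1^2}x$, and then apply Lemma~\ref{coro:tan2} to $-\eta$. The paper instead observes that $\nabla u(a)$ itself lies in $T_a$ (by the Neumann condition) and is nonzero (by Proposition~\ref{grad}); applying Lemma~\ref{coro:tan2} with $\eta = \nabla u(a)$ then gives $\nabla u(a)\cdot e_1 < 0$ directly, since $\nabla u(a)\cdot\nabla u(a)>0$ forces $\nabla u(a)\cdot e_1<0$ by the ``if and only if''. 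The paper's trick is slicker and avoids any explicit decomposition, but it relies on the separate non-vanishing result Proposition~\ref{grad}; your route is slightly longer but self-contained, needing only that $\eta\neq 0$, which follows immediately from $x\neq\pm R_1 e_1$.
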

\begin{proof}
	Let $a\in \Ga _N\setminus \{\pm R_1 e_1\}$. From the Neumann condition on $\Ga _N$, we get $\nabla u(a)\in T_a$. Since $\nabla u$ never vanishes on $\Ga _N\setminus \{\pm R_1 e_1\}$, we obtain $\nabla u(a)\cdot \nabla u(a)>0$. Therefore, from Lemma~\ref{coro:tan2}, we get
	
	\noindent
	\begin{equation*}
	\frac{\pa u}{\pa x_1}(a)=\nabla u(a)\cdot e_1<0. \qedhere
	\end{equation*}
\end{proof}
\begin{remark}
	One can give an alternate proof for Proposition~\ref{propo:axial}. The axial symmetry of $u$ and the Neumann condition $\frac{\pa u}{\pa n}=0$ on $\Ga _N$ imply that 
	\begin{equation}\label{eqn:gradinpolarform}
	\nabla u (x)=\frac{\pa u}{\pa \T}\left(\frac{\sin \T}{R_1}e_1-\frac{x_1(x-(x\cdot e_1)e_1)}{R_1^2 |x-(x\cdot e_1)e_1|}\right) \mbox{ for }x\in \Ga _N\setminus\{\pm R_1 e_1\}.
	\end{equation}
	Therefore, 
	
	\noindent
	\begin{equation*}
	\frac{\pa u}{\pa x_1}=\frac{1}{R_1}\frac{\pa u}{\pa \T}\sin \T<0
	\end{equation*}
	as $\frac{\pa u}{\pa \T}<0$ from the foliated Schwarz symmetry (see Remark~\ref{rmk:fSS2}). The equation~\eqref{eqn:gradinpolarform} is derived by considering the coordinate system $(r,\T, \varphi _1,\dots,\varphi _{d-2})$ with $r=|x|,r\cos \T=-x_1$ and $\T\in [0,\pi),\varphi _i\in [0,2\pi )$ for $i=2,\dots ,d-2,$ as given in \cite[Proof of Lemma~5.1 and Proposition~5.4]{Pedro-Tobis}, and observing that the axial symmetry implies that $\frac{\pa u}{\pa \varphi _i}=0$ for $i=1,2,\dots , d-2$ and the Neumann condition on $\Ga _N$ implies that $\frac{\pa u}{\pa r}=0$ on $\Ga _N$ (i.e., for $r=R_1$).
\end{remark}
\subsection{Strict monotonicity along the affine-radial directions}\label{Afine-4.2}
For a point $x\in \RN \setminus \{0\}$, the vector $x-a$ is called an \emph{affine-radial direction} from $a\in \RN $. The following theorem gives the monotonicity of the first eigenfunctions of~\eqref{eigen} along the affine-radial directions from $s e_1$.
\begin{theorem}\label{thm 3}
	Let $\Om _s\in \A_{R_0 R_1}$ be an annualar domain as given in~\eqref{family} with $0<s<R_1-R_0$. Let $u>0$ be an eigenfunction of \eqref{eigen} corresponding to the first eigenvalue $\tau _1(s)$ in $\Om _s$. Then $u$ is strictly increasing along all the affine-radial directions from $se_1$ in $\Om _s$, more precisely
	
	\noindent
	\begin{equation*}
	\nabla u(x)\cdot (x-se_1)>0 \mbox{ for }x\in \overline{\Om }_s\setminus \{\pm R_1 e_1\}.
	\end{equation*}
\end{theorem}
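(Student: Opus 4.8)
The plan is to study the scalar field $w(x)\dq \nabla u(x)\cdot (x-se_1)$, which is exactly the directional derivative of $u$ along the affine-radial direction from $se_1$, and to prove $w>0$ on $\overline{\Om}_s\setminus\{\pm R_1e_1\}$ by a maximum-principle argument. The first step is to record the differential identity satisfied by $w$. Since $u$ solves an elliptic equation with constant coefficients it is smooth in $\Om_s$, and as the boundary pieces $\Ga_D,\Ga_N$ are smooth and disjoint, $u$ is regular up to $\overline{\Om}_s$; in particular $w$ is admissible in the maximum principles. Using $\De\big((x-se_1)\cdot\nabla u\big)=2\De u+(x-se_1)\cdot\nabla(\De u)$ together with $\De u=-\tau_1(s)\,u$, one obtains
\begin{equation*}
-\De w-\tau_1(s)\,w=2\tau_1(s)\,u>0\quad\text{in }\Om_s,
\end{equation*}
so $w$ is a strict supersolution of $-\De-\tau_1(s)$.

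Next I would analyze $w$ on $\pa\Om_s=\Ga_D\cup\Ga_N$. On $\Ga_D=\pa B_{R_0}(se_1)$ the vector $(x-se_1)/R_0$ is the inward unit normal to $\Om_s$; since $u=0$ on $\Ga_D$, $u>0$ in $\Om_s$, and $u$ is non-constant, Hopf's lemma (Proposition~\ref{SMP}) gives $w>0$ on $\Ga_D$. On $\Ga_N=\pa B_{R_1}(0)$ the outward normal is $x/R_1$, and the Neumann condition forces $\nabla u(x)\cdot x=0$; hence $w(x)=\nabla u(x)\cdot x-s\,\pa u/\pa x_1=-s\,\pa u/\pa x_1$, which is strictly positive on $\Ga_N\setminus\{\pm R_1e_1\}$ by Proposition~\ref{propo:axial}. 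At the two poles $\pm R_1e_1$ the axial symmetry forces $\nabla u$ to be parallel to $e_1$, and then the Neumann condition gives $\pa u/\pa x_1=0$, so $w=0$ there. Altogether $w\ge 0$ on $\pa\Om_s$, with strict inequality off the poles.

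Now I would invoke the weak maximum principle in its Dirichlet form. The key point is that, even though $u$ satisfies a Neumann condition on $\Ga_N$, the \emph{value} of $w$ is already controlled on all of $\pa\Om_s$, so I treat the entire boundary as a Dirichlet boundary for $w$. Because $\Ga_N$ has positive $\H^{d-1}$-measure we have the strict inequality $\tau_1(s)<\la_1(\Om_s)$; choosing $\mu=\tau_1(s)<\la_1(\Om_s)$ and $\Om=\Om'=\Om_s$ in Proposition~\ref{aux:smp}(i)—whose hypotheses $-\De w-\mu w\ge0$ in $\Om_s$ and $w\ge0$ on $\pa\Om_s$ are exactly what was just established—yields $w\ge0$ in $\Om_s$. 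Finally, from $-\De w=\tau_1(s)\,w+2\tau_1(s)\,u\ge0$ and $w\ge0$, the strong maximum principle (Proposition~\ref{SMP}(a)) forces either $w>0$ in $\Om_s$ or $w\equiv\text{const}$; the latter is impossible, since the identity above would then make $u$ constant. Combining the interior positivity with the boundary analysis gives $w>0$ on $\overline{\Om}_s\setminus\{\pm R_1e_1\}$, which is the assertion.

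The main obstacle is organizing the boundary data of $w$ so that a single maximum principle applies: the mixed conditions on $u$ do not transfer directly to convenient conditions on $w$. The observation that unlocks the proof is that the earlier axial monotonicity (Proposition~\ref{propo:axial}) makes $w$ pointwise positive on the Neumann part $\Ga_N$, which lets me replace the Neumann condition by a Dirichlet condition for $w$ and exploit the strict spectral gap $\tau_1(s)<\la_1(\Om_s)$, thereby avoiding the delicate borderline case of the mixed-boundary principle. A secondary technical point is the regularity of $w$ up to $\pa\Om_s$, which is handled by standard elliptic regularity since $\Ga_D$ and $\Ga_N$ are smooth and disjoint.
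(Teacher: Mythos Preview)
Your proof is correct and follows essentially the same route as the paper: you define the affine-radial derivative $w=\nabla u\cdot(x-se_1)$, compute $-\De w-\tau_1(s)w=2\tau_1(s)u>0$, use Hopf's lemma on $\Ga_D$ and Proposition~\ref{propo:axial} on $\Ga_N$ to get $w\ge0$ on $\pa\Om_s$, then invoke Proposition~\ref{aux:smp} via the strict inequality $\tau_1(s)<\la_1(\Om_s)$ and conclude with the strong maximum principle. Your explicit remark that knowing the sign of $w$ on $\Ga_N$ lets one treat the whole boundary as Dirichlet data (and thereby sidestep the mixed-boundary principle) is exactly the mechanism the paper uses.
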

\begin{proof}
	We consider the function
	$ v(x)= \nabla u(x)\cdot (x-se_1)\mbox{ for } x\in \overline{\Om }_s.$
	Then 
	%
	$v$ satisfies the following equation:
	
	\noindent
	\begin{equation*}
	-\De v - \tau _1(s) v=2\tau _1(s) u \mbox{ in }\Om _s.
	\end{equation*} 
	Notice that, for $a\in \Ga _N$, $a$ is an outward normal to $\Om _s$ therefore $ \nabla u (a)\cdot a =0$. Thus, 
	
	\noindent
	\begin{equation*}
	v(a)=\nabla u(a)\cdot a -s\frac{\pa u}{\pa x_1}(a)
	=-s\frac{\pa u}{\pa x_1}(a). 
	\end{equation*}
	Now, by Proposition~\ref{propo:axial}, we have $v(a)>0$ for $a\in \Ga _N\setminus \{\pm R_1 e_1\}$. Since $e_1$ is a normal direction at $a=\{\pm R_1 e_1\}$ thus $v(a)=0$.
	For~$a\in \Ga _D$, $a-se_1$ is an inward direction to $\Om _s$ at $a$ and $u(a)=0$. Therefore, from Hopf's lemma (Proposition~\ref{SMP}) we get 
	$v(x)=\nabla u(x)\cdot (x-se_1)>0.$
	So $v$ satisfies the following boundary value problem: 
	
	\noindent
	\begin{equation*}
	\left.
	\begin{aligned}
	-\De v - \tau _1(s) v =& \ 2\tau _1(s) u\geq 0 \mbox{ in } \Om _s,\\
	v > 0 \mbox{ on } \Ga _D\cup \big(\Ga _N\setminus \{\pm R_1 e_1\}\big),& 
	\quad v =0 \mbox{ on }\{\pm R_1 e_1\}.
	\end{aligned}
	\right\}
	\end{equation*}
	Since $\tau _1(s)<\la _1(s)$, from Proposition~\ref{aux:smp} we get $v\geq 0$ in $\Om _s$. Now, using the strong maximum principle (Proposition~\ref{SMP}) we conclude that $v>0$ on $\overline{\Om }_s\setminus \{\pm R_1 e_1\}$.
\end{proof}
\begin{remark}
	\noi
	\begin{enumerate}[\rm (i)]
		\item
		For the concentric annular domain $\Om _0$, i.e., for $s=0$, Theorem~\ref{thm 3} asserts that a positive first eigenfunction $u$ of~\eqref{eigen} is strictly increasing along the radial direction, i.e.,
		$\frac{\pa u}{\pa r}>0$ in $\Om _0$.
		\item
		Since $u$ is strictly increasing along all the affine-radial directions from $s e_1$, we remark that $\nabla u$ can never vanishes in $\overline{ \Om }_{s}\setminus \{\pm R_1 e_1\}$, as concluded in Proposition~\ref{grad}.
		\item 
		We observe that the foliated Schwarz symmetry (Theorem~\ref{thm 0}) of the positive first eigenfunctions implies that the maximum is attained only on the $e_1$-axis. Then, the monotonicity in the affine-radial directions implies that the maximum is attained at $-R_1 e_1$. 
	\end{enumerate}
\end{remark}
%
\subsection{Strict monotonicity along the axial direction}\label{Axial-4.3}
In this section, we prove the monotonicity of the positive eigenfunctions of~\eqref{eigen} corresponding to $\tau_1(s)$ along the $e_1$-direction in a subdomain of $\Om _s$. First, we introduce some notations. For $\al \in (-R_1,R_1)$ the $\al $-cap of $\Om _s$, denoted by $\Sigma _\al $, is defined as:

\noindent
\begin{equation*}
\Si _\al =\{x\in \Om _s : x_1<\al \}.
\end{equation*}
Let $H_\al = \{x\in \RN : x_1\leq \al \}$ be a polarizer,
$\Ga _D^\al =\Ga _D\cap H_\al $ and
$\Ga _N^\al =\Ga _N\cap H_\al $. Therefore
$\pa \Si _\al =\Ga _N^\al \sqcup \Ga _D^\al \sqcup (\overline{\Om _s}\cap \pa H_\al).$  See Figure~\ref{fig:cap}.
\begin{figure}[tb]
	\includegraphics[width=0.3\linewidth]{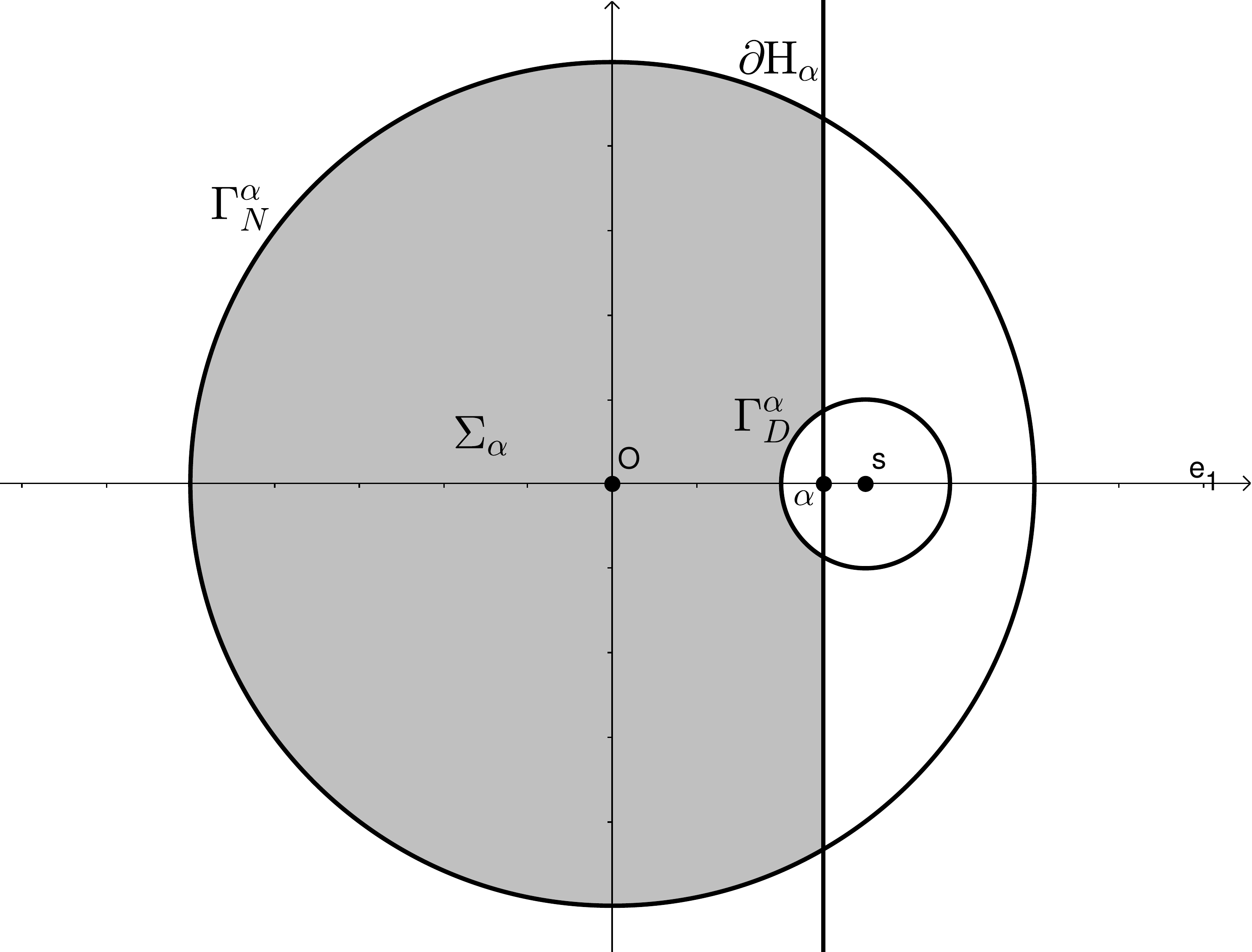}
	\caption{The $\al$-caps of the annular domain $\Om _s$.}
	\label{fig:cap}
\end{figure}
We observe that
\begin{enumerate}[\rm (i)]
	\item $\Ga _D^\al \neq \emptyset \mbox{ only if }\al >s-R_0,$
	\item $\Si _{\al _1} \subset \Si _{\al _2} ,\ \Ga _D^{\al _1} \subset \Ga _D^{\al _2}\mbox{ and } \Ga _N^{\al _1} \subset \Ga _N^{\al _2} \mbox{ for }\al _1 <\al _2 .$
\end{enumerate}
Let $\si _\al $ be the reflection with respect to $H_\al $. Then, we have

\noindent
\begin{equation*}
\si _\al (x)=(2\al -x_1,x')=2\al e_1+\si _0(x)\quad \forall x =(x_1,x')\in \RN.
\end{equation*}
%
%
%
%
\begin{lemma}\label{le:s/2}
	Let $\Om _s\in \A_{R_0 R_1}$ be an annualar domain as given in~\eqref{family} with $0<s<R_1-R_0$. Let $u>0$ be an eigenfunction of \eqref{eigen} corresponding to the first eigenvalue $\tau _1(s)$ in $\Om _s$. If $ \al \leq 0$ then 
	
	\noindent
	\begin{equation*}
	\frac{\pa u}{\pa x_1} (x)<0\mbox{ for }x\in \overline{\Si }_\al \setminus \{-R_1 e_1\}.
	\end{equation*}
\end{lemma}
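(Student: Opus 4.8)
The plan is to prove that $\frac{\pa u}{\pa x_1}<0$ on $\overline{\Si}_\al\setminus\{-R_1 e_1\}$ for $\al\le 0$ by the moving-plane/reflection technique, using the function $w(x)\dq u(\si_\al(x))-u(x)$ and the variant of the weak comparison principle for the Zaremba problem (Proposition~\ref{aux:wcp}). First I would fix $\al\le 0$ and define $w$ on the reflected cap $\overline{\Si}_\al$; since $-\De u=\tau_1(s)u$ and $\si_\al$ is an isometry, $w$ satisfies $-\De w-\tau_1(s)w=0$ in $\Si_\al$. The boundary of $\Si_\al$ splits into three pieces: the flat face $\overline{\Om_s}\cap\pa H_\al$, where $w=0$ by construction; the Dirichlet part $\Ga_D^\al$; and the Neumann part $\Ga_N^\al$.

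The heart of the argument is verifying the three hypotheses of Proposition~\ref{aux:wcp} on the cap $\Si_\al$, viewed as $\Om'\subseteq\Om=\Om_s$ with $\mu=\tau_1(s)$. On the flat face $w=0\ge 0$. On $\Ga_D^\al$ one has $u=0$, so $w(x)=u(\si_\al(x))\ge 0$ because $\si_\al(x)$ lands in $\overline{\Om_s}$ where $u>0$ (here one must check that for $\al\le 0$ the reflected point $\si_\al(x)$ stays in the closure of the domain and in particular does not fall inside the removed inner ball $\overline{B_{R_0}(se_1)}$; this uses $s>0$ together with $\al\le 0$, so that reflection across $x_1=\al$ pushes points of $\Ga_D^\al$ to the right, away from the hole). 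The genuinely delicate piece is the Neumann condition $\frac{\pa w}{\pa n}\ge 0$ on $\Ga_N^\al$: I expect this to be the main obstacle, since reflecting the Neumann boundary across $H_\al$ need not map $\Ga_N$ onto itself and the normal derivatives of $u$ and of $u\circ\si_\al$ must be compared. Here I would invoke the geometry established earlier—the foliated Schwarz symmetry (Theorem~\ref{thm 0}), its tangential reformulation (Lemma~\ref{coro:tan2}, i.e.\ $\frac{\pa u}{\pa\T}<0$), and Proposition~\ref{propo:axial} giving $\frac{\pa u}{\pa x_1}<0$ on $\Ga_N\setminus\{\pm R_1 e_1\}$—to control the sign of $\frac{\pa w}{\pa n}$ on the Neumann cap. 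The condition $\al\le 0$ should be exactly what guarantees that for $x\in\Ga_N^\al$ the reflected point $\si_\al(x)$ lies in the ``monotone'' region where these sign conditions force $\frac{\pa w}{\pa n}\ge 0$.

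Once the three sign conditions hold, Proposition~\ref{aux:wcp} (applied in case (ii), noting $|\Om_s\setminus\Si_\al|>0$ since the cap is a proper subdomain, with $\mu=\tau_1(s)=\tau_1(\Om_s)$) yields $w\ge 0$ in $\Si_\al$, i.e.\ $u(\si_\al(x))\ge u(x)$. Then I would upgrade this to strict monotonicity: since $w$ is a nonnegative solution of $-\De w-\tau_1(s)w=0$ that is not identically zero, the strong maximum principle (Proposition~\ref{SMP}) forces $w>0$ in the interior $\Si_\al$. Finally, translating the strict inequality $u(\si_\al(x))>u(x)$ for points off the reflection hyperplane into a statement about the derivative—by letting the reflection plane vary, or by applying Hopf's lemma at points of the flat face $\overline{\Om_s}\cap\pa H_\al$ where $w=0$—gives $\frac{\pa w}{\pa x_1}<0$, hence $\frac{\pa u}{\pa x_1}(x)<0$ at such points; differentiating the family in $\al$ propagates the strict sign throughout $\overline{\Si}_\al$. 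The exclusion of the single peak point $-R_1 e_1$ reflects that there $\frac{\pa u}{\pa x_1}$ may vanish, consistent with $-R_1 e_1$ being the location of the maximum on the axis.
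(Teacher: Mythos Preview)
Your reflection scheme has a genuine geometric gap. You assert that for $x\in\Ga_D^\al$ (and more generally $x\in\Si_\al$) with $\al\le 0$ the reflected point $\si_\al(x)$ stays in $\overline{\Om_s}$, because ``reflection across $x_1=\al$ pushes points to the right, away from the hole.'' The first clause is true but the second is backward: the hole is centred at $se_1$ with $s>0\ge\al$, so it lies to the \emph{right} of the reflecting hyperplane. A direct computation gives, for any $x$ with $x_1<\al$,
\[
|\si_\al(x)-se_1|^2-|x-se_1|^2=(2\al-x_1-s)^2-(x_1-s)^2=4(\al-x_1)(\al-s)<0,
\]
so reflection always moves points \emph{closer} to the centre of the inner ball. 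In particular, whenever $\Ga_D^\al\ne\emptyset$ (i.e.\ whenever $s<R_0+\al$, which for small $s$ includes $\al=0$), points of $\Si_\al$ near the inner sphere are reflected strictly inside $\overline{B_{R_0}(se_1)}$. Thus $w=u\circ\si_\al-u$ is not even defined (let alone $\C^2$) on all of $\Si_\al$, and Proposition~\ref{aux:wcp} cannot be applied as stated. Your treatment of the Neumann piece is also only a sketch: the sign of $\frac{\pa w}{\pa n}$ on $\Ga_N^\al$ reduces to $\nabla u(\si_\al(x))\cdot\si_0(x)\ge 0$ for $\si_\al(x)$ an interior point, and this does not follow directly from $\frac{\pa u}{\pa\T}<0$ or Proposition~\ref{propo:axial}.

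The paper avoids both obstacles by working not with the reflected difference but directly with $v=\frac{\pa u}{\pa x_1}$, which is globally defined on $\Om_s$ and satisfies $-\De v-\tau_1(s)v=0$. It suffices to take $\al=0$. On each piece of $\pa\Si_0$ the sign of $v$ is obtained immediately: on $\Ga_D^0$ Hopf's lemma gives $v<0$ (since $e_1$ is outward for $x_1<s$); on $\Ga_N^0\setminus\{-R_1e_1\}$ Proposition~\ref{propo:axial} gives $v<0$; and on the flat face $\overline{\Om_s}\cap\{x_1=0\}$ the vector $e_1$ is tangent to $\pa B_{|x|}(0)$, so Lemma~\ref{coro:tan2} gives $v<0$. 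One then applies the \emph{Dirichlet} maximum principle (Proposition~\ref{aux:smp} with $\mu=\tau_1(s)<\la_1(\Om_s)$), followed by the strong maximum principle, to conclude $v<0$ in $\overline{\Si}_0\setminus\{-R_1e_1\}$. No reflection, and hence no issue with the hole, is needed.
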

\begin{proof}
	Since $\Si _\al \subseteq \Si _{ 0} \mbox{ for }\al \leq 0,$ it is enough to prove the result for
	$\al =0$. Assume that $\al =0$. First, we observe that $\frac{\pa u}{\pa x_1}$ satisfies the following equation:
	
	\noindent
	\begin{equation*}
	-\De\left(\frac{\pa u}{\pa x_1}\right)-\tau _1(s)\frac{\pa u}{\pa x_1}=0 \mbox{ in }\Om _s.
	\end{equation*}
	\noindent Notice that, $e_1$ is an outward direction to $\Om _{s}$ for $x\in \Ga _D^0$ (if non-empty). We have $\frac{\pa u}{\pa x_1}<0$ on $\Ga _D^0$ (Proposition~\ref{SMP}) and on $\Ga _N^0\setminus \{-R_1 e_1\}$ (Proposition~\ref{propo:axial}). For $x\in \pa H_0 \cap \overline{\Om }_s$ we have $x\cdot e_1=0$, i.e., $e_1\in T_x$ therefore Lemma~\ref{coro:tan2} implies that $\frac{\pa u}{\pa x_1}(x)<0 
	$.
	Therefore,~$\frac{\pa u}{\pa x_1} $ satisfies the following boundary value problem in
	$\Si _0 :$ 
	
	\noindent
	\begin{equation*}
	\begin{array}{c}
	-\De\left(\frac{\pa u}{\pa x_1}\right)-\tau _1(s)\frac{\pa u}{\pa x_1} =0\mbox{ in } \Si _0 ,\\
	\frac{\pa u}{\pa x_1}
	\leq 0 \mbox{ on }\Ga _N^0, \
	\frac{\pa u}{\pa x_1}
	<0 \mbox{ on } \Ga _D^0\cup (\pa H_0 \cap \overline{\Om }_s).
	\end{array}
	\end{equation*}
	Thus, using Proposition~\ref{aux:smp} and Proposition~\ref{SMP} 
	we obtain that $\frac{\pa u}{\pa x_1}<0 \mbox{ on } \overline{\Si }_0 \setminus \{-R_1 e_1\}$.
\end{proof}
\noindent Now, let us consider the following set:

\noindent
\begin{equation*}
\A \dq \left\{\al \in (-R_1,s) : \frac{\pa u}{\pa x_1}<0 \mbox{ in } \overline{\Si }_\al \setminus \{-R_1 e_1\} \right\}. 
\end{equation*}
Observe that $\A \neq \emptyset $, since $(-R_1,0)\subset \A $ (from Lemma~\ref{le:s/2}). 
We claim that the maximal cap such that $\displaystyle \frac{\pa u}{\pa x_1}<0$ is the $s$-cap $\Si _s$.
\begin{theorem}\label{thm 2}
	Let $\Om _s\in \A_{R_0 R_1}$ be an annular domain as given in~\eqref{family} with $0<s<R_1-R_0$. Let $u>0$ be an eigenfunction of \eqref{eigen} corresponding to the first eigenvalue $\tau _1(s)$ in $\Om _s$. Then
	
	\noindent
	\begin{equation*}
	\displaystyle \frac{\pa u}{\pa x_1}(x)<0\mbox{ for } x\in \Si _s.
	\end{equation*}
\end{theorem}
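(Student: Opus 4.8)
The plan is to carry out the moving-plane/continuation argument for which the set $\A $ was introduced. Since $\Si _{\al '}\subset \Si _{\al }$ whenever $\al '<\al $, the set $\A $ is a downward-closed subinterval of $(-R_1,s)$, and it is nonempty because $(-R_1,0]\subseteq \A $ by Lemma~\ref{le:s/2}. Put $\al ^*\dq \sup \A $; since $\A \subseteq (-R_1,s)$ we have $\al ^*\le s$, and because $\Si _s=\bigcup _{\al <s}\Si _\al $ it suffices to prove $\al ^*=s$. Assume, for contradiction, that $\al ^*\in (0,s)$. I will show that $\A $ is relatively open in $(-R_1,s)$ and that $\al ^*\in \A $; these two facts force some $\al ^*+\eps \in \A $, contradicting maximality of $\al ^*$.

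Openness is the routine half. Write $v\dq -\frac{\pa u}{\pa x_1}$, which satisfies $-\De v=\tau _1(s) v$ in $\Om _s$. Suppose $\al \in \A $, so $v>0$ on $\overline{\Si }_\al \setminus \{-R_1 e_1\}$. At every level below $s$ the sign of $v$ on the fixed boundary is already known: Proposition~\ref{propo:axial} gives $v>0$ on $\Ga _N\setminus \{\pm R_1 e_1\}$, and Hopf's lemma (Proposition~\ref{SMP}) gives $v>0$ on $\Ga _D\cap \{x_1<s\}$ because $e_1$ is an outward direction to $\Om _s$ there. On the compact slice $\{x_1=\al \}\cap \overline{\Om }_s$, which does not contain $-R_1 e_1$, we have $v>0$; hence by continuity of $v$ on $\overline{\Om }_s$ and compactness there is $\eps >0$ with $v>0$ on the slab $\{\al \le x_1\le \al +\eps \}\cap \overline{\Om }_s$. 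Combining these, $v>0$ on $\overline{\Si }_{\al +\eps }\setminus \{-R_1 e_1\}$, i.e. $\al +\eps \in \A $.

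It remains to prove $\al ^*\in \A $, which is the heart of the matter. Letting $\A \ni \al \uparrow \al ^*$ and using continuity yields $v\ge 0$ on $\overline{\Si }_{\al ^*}$. Proposition~\ref{propo:axial} and Hopf's lemma give $v>0$ on $\Ga _N^{\al ^*}\setminus \{-R_1 e_1\}$ and on $\Ga _D^{\al ^*}$, so $v\not\equiv 0$, and the strong maximum principle applied to $-\De v=\tau _1(s) v\ge 0$ gives $v>0$ in the open cap $\Si _{\al ^*}$. Only the interior cut $\{x_1=\al ^*\}\cap \Om _s$ resists: there $v\ge 0$ does not by itself force $v>0$, since an interior zero of $v$ on the cut is compatible with the available pointwise data, and information from the right of the plane is needed. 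Let $\si _{\al ^*}$ be the reflection in $\{x_1=\al ^*\}$ and set
\[ w(x)\dq u(x)-u(\si _{\al ^*}(x))\quad \text{on}\quad D\dq \{x\in \Om _s: x_1<\al ^*,\ \si _{\al ^*}(x)\in \Om _s\}, \]
so that $-\De w-\tau _1(s) w=0$ in $D$. If $w\ge 0$ on $\pa D$, then Proposition~\ref{aux:smp} (with $\mu =\tau _1(s)<\la _1(\Om _s)$) gives $w\ge 0$ in $D$, the strong maximum principle upgrades this to $w>0$, and Hopf's lemma at any interior cut point $a$ (where $w(a)=0$ and $e_1$ is the outward normal to $D$) gives $\frac{\pa w}{\pa n}(a)<0$. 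Since $\frac{\pa w}{\pa x_1}(x)=\frac{\pa u}{\pa x_1}(x)+\frac{\pa u}{\pa x_1}(\si _{\al ^*}(x))$ reduces to $2\frac{\pa u}{\pa x_1}(a)$ on the cut, this is precisely $\frac{\pa u}{\pa x_1}(a)<0$; as $a$ was an arbitrary interior cut point, $\al ^*\in \A $ and the contradiction is reached.

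The main obstacle is the verification that $w\ge 0$ on $\pa D$, where the failure of $\Om _s$ to be symmetric about $\{x_1=\al ^*\}$ enters. Here the asymmetry is partly an ally: writing $p=x_1-s$ and $q=2\al ^*-x_1-s$ and factoring $q^2-p^2$ with $q-p=2(\al ^*-x_1)>0$ and $q+p=2(\al ^*-s)<0$ shows that the reflection of the left Dirichlet hemisphere lands strictly inside the hole $B_{R_0}(se_1)$, so those points leave $D$; likewise $|\si _{\al ^*}(x)|^2=R_1^2+4\al ^*(\al ^*-x_1)>R_1^2$ for $x\in \Ga _N$ with $x_1<\al ^*$ shows the original Neumann boundary also leaves $D$. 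Consequently $\pa D$ consists of the cut (where $w=0$), the reflected hole $\si _{\al ^*}(\Ga _D)$ (where $w(x)=u(x)\ge 0$), and the reflected outer sphere $\si _{\al ^*}(\Ga _N)\cap \overline{D}$. The last piece is the genuinely delicate one: there $w\ge 0$ means the ordering $u(x)\ge u(\si _{\al ^*}(x))$ for $x$ interior with $\si _{\al ^*}(x)\in \Ga _N$, an inequality that the symmetry of the domain would supply for free in the concentric case. Extracting it instead from the fine geometry of $u$ — the foliated Schwarz symmetry (Theorem~\ref{thm 0}, Lemma~\ref{coro:tan2}) together with the strict affine-radial monotonicity from $se_1$ (Theorem~\ref{thm 3}) — is where the real work lies.
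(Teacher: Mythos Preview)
Your openness step coincides with the paper's. Where you diverge is at the closedness: the paper never introduces the reflected comparison function $w(x)=u(x)-u(\si _{\al ^*}(x))$ on a domain $D$. It works throughout with $\frac{\pa u}{\pa x_1}$ alone, passes from right-openness (together with the Hopf observation $\frac{\pa u}{\pa x_1}\ge 0$ on $\Ga _D\cap\{x_1\ge s\}$, which marks $s$ as the natural obstruction) to $\sup\A =s$, deduces $\frac{\pa u}{\pa x_1}\le 0$ on $\overline{\Si}_s$, and then the strong maximum principle on the open cap $\Si _s$ yields the strict sign. No moving-plane comparison of values of $u$ is involved at all.

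Your reflection route is not only more elaborate but, as you acknowledge, incomplete: the inequality $w\ge 0$ on the curved piece $\{x\in\overline{D}:\si _{\al ^*}(x)\in\Ga _N\}$ of $\pa D$ is left unverified, and the hint that Theorem~\ref{thm 0}, Lemma~\ref{coro:tan2} and Theorem~\ref{thm 3} should supply it is not convincing. Those results compare values of $u$ either on a common sphere about the origin (foliated Schwarz symmetry) or along a common ray from $se_1$ (affine-radial monotonicity); but a point $x$ with $x_1<\al ^*$ and its reflection $\si _{\al ^*}(x)\in\Ga _N$ generically lie on different spheres about $0$ and on different rays from $se_1$, so neither comparison applies directly, and no short chain of such comparisons presents itself. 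In effect you have exchanged one unresolved boundary issue (the sign of $\frac{\pa u}{\pa x_1}$ on the interior of the cut) for another (the ordering $u(x)\ge u(\si _{\al ^*}(x))$ across the reflected Neumann arc) that looks at least as hard; the paper's direct treatment of $\frac{\pa u}{\pa x_1}$ avoids this detour entirely.
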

\begin{proof}
	We observe that $\frac{\pa u}{\pa x_1}(x)<0$ for $x\in \Ga _N\setminus \{\pm R_1 e_1 \}$ (from Proposition \ref{propo:axial}), also for $x\in \Ga _D\cap \{x_1<s\}$ (from Hopf's lemma). Now, let $\al \in \A$ with $\al <s$. Since $\frac{\pa u}{\pa x_1}(x)<0$ for $x\in \big(\Ga _D\cap \{x_1<s\} \big)\cup \big(\Ga _N\setminus \{\pm R_1 e_1\} \big)$, by the continuity of $\frac{\pa u}{\pa x_1}$ and the compactness $\pa H_\al \cap \overline{\Om }_s$ there exists $\epsilon (\al )>0$ such that $\frac{\pa u}{\pa x_1}(x)<0$ 
	for $x\in \Si _{\al +\epsilon (\al)}$, i.e., $\al +\epsilon (\al )\in \A.$ Also we observe, again from the Hopf's lemma, that $\frac{\pa u}{\pa x_1}(x)\geq 0$ for $x\in \Ga _D\cap \{x_1\geq s \}$. Hence $\sup \A =s$. Therefore $\frac{\pa u}{\pa x_1}(x)\leq 0$ for $x\in \overline{\Si }_s$. Since $\frac{\pa u}{\pa x_1}$ satisfies the following equation in ${\Si }_s:$ 
	
	\noindent
	\begin{equation*}
	-\De \left(\frac{\pa u}{\pa x_1}\right)= \tau _1(s) \frac{\pa u}{\pa x_1} \leq 0 ,
	\end{equation*}
	and 
	$\frac{\pa u}{\pa x_1}<0$ in $\Si _{0}$, from the strong maximum principle (Proposition~\ref{SMP}) we obtain that $\frac{\pa u} {\pa x_1}<0$ in $\Si _s.$
\end{proof}
\section{The shape variation of the first eigenvalue}\label{Mono}
In this section, we apply the geometry of the eigenfunctions that we obtained in Section~\ref{Symm and Mono} to study the monotonicity of the first eigenvalue of~\eqref{eigen} over a family annular domains. 
Recall that,
$\Om_s=B_{R_1}(0)\setminus \overline{B_{R_0}(se_1)}$
is an annular domain for some
$0<R_0<R_1\mbox{ and }0\leq s<R_1-R_0$; and $\tau _1(s)$ is the first eigenvalue of \eqref{eigen} in $\Om _{s}$. For a given vector field $V\in W^{3,\infty }(\RN , \RN )$, we consider the perturbations of $\Om _s$ by $\widetilde{\Om }_\eps =(I+\eps V)\Om _s$. We consider the vector field $V$ as given below:

\noindent
\begin{equation*}
	V(x)=\rho (x) e_1, \mbox{ where } \rho \in \C_c^{\infty }(B_{R_1}(0))\mbox{ and } \rho \equiv 1 \mbox{ in a neighborhood of } B_{R_0}(s e_1).
\end{equation*}
For this choice of $V$ and $\eps $ sufficiently small, the perturbations $\widetilde{\Om }_\eps $ of $\Om _s$ are precisely the translation of the inner ball along the $e_1$-direction towards the outer boundary. More precisely, 

\noindent
\begin{equation*}
	\widetilde{\Om }_\eps =\Om _{s+\eps }.
\end{equation*}
Therefore,
$\tau _1(\widetilde{\Om }_\eps )=\tau _1(s+\eps ),\ \tau _1 (\widetilde{\Om }_0)=\tau _1(s)$. Let $u>0$ be the eigenfunction corresponding to $\tau _1(s)$ with the normalization $\int _{\Om_{s}}u^2\dx=1$.
Hence, from the definition of the Eulerian derivative of the shape functionals and the shape derivative formula~\eqref{shape formula}, we get the following expression for $\tau '_1(s):$
\begin{equation}\label{shape}
	\tau _1'(s)=-\int _{\Ga_D}\left|\frac{\pa u}{\pa n}(x)\right|^2 n_1(x) \dS .
\end{equation}
%
Recall, from Section~\ref{Axial-4.3},  that $\Si _s$ is the $s$-cap of $\Om _{s}$ and $\si _s$ is the reflection with respect to the boundary $\pa H_s$ of the polarizer
$H_s= \{x\in \RN : x_1\leq s\}.$ Therefore,
\begin{equation}\label{reflection}
	\si_s(x)=x(I-2{e_1}^T e_1)+2s e_1=x\si_0+2s e_1\quad \forall x\in \RN, 
\end{equation}
where $\si_0$ is the symmetric matrix $I-2{e_1}^T e_1.$
%
Now, we rewrite the derivative formula \eqref{shape} as an integral over the half boundary $\Ga _D\cap \{x_1>s \}$.
\begin{proposition}\label{rewrite}
	Let $\Om _s\in \A_{R_0 R_1}$ be an annualar domain as given in~\eqref{family}. Let $u>0$ be an eigenfunction of \eqref{eigen} corresponding to the first eigenvalue $\tau _1(s)$ in $\Om _s$ with the normalization $\int_{\Om_{s}}u^2 \dx =1$. Then
	\begin{equation}\label{shape1}
	\tau_1'(s)=\Int_{\Ga _D\cap \{x_1>s \} } \left(\left|\frac{\pa v}{\pa n}(x)\right|^2 -\left|\frac{\pa u}{\pa n}(x)\right|^2\right)n_1(x)\dS,
	\end{equation}
	where $v=u\circ \si _s$ in $\overline{\Om _s}\cap \{x_1 \geq s \}$.
\end{proposition}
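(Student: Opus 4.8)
The plan is to start from the Hadamard formula~\eqref{shape} and fold the integral over the whole Dirichlet boundary $\Ga_D=\pa B_{R_0}(se_1)$ onto the hemisphere $\Ga_D\cap\{x_1>s\}$ by reflecting across $\pa H_s=\{x_1=s\}$. First I split $\Ga_D$ along $\pa H_s$ into $\Ga_D\cap\{x_1>s\}$ and $\Ga_D\cap\{x_1<s\}$; the equator $\Ga_D\cap\pa H_s$ has zero $(d-1)$-dimensional Hausdorff measure, so
\[
\tau_1'(s)=-\int_{\Ga_D\cap\{x_1>s\}}\left|\frac{\pa u}{\pa n}\right|^2 n_1\dS-\int_{\Ga_D\cap\{x_1<s\}}\left|\frac{\pa u}{\pa n}\right|^2 n_1\dS .
\]

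In the second integral I then substitute $x=\si_s(y)$ with $y$ ranging over $\Ga_D\cap\{x_1>s\}$, and invoke three elementary facts. Since $\si_s$ is an isometry that preserves $B_{R_0}(se_1)$ (the inner ball is symmetric about $\pa H_s$), it carries $\Ga_D\cap\{x_1<s\}$ bijectively onto $\Ga_D\cap\{x_1>s\}$ and leaves $\dS$ invariant. Differentiating $\si_s(x)=x\si_0+2se_1$ from~\eqref{reflection} and using that $\si_0$ is a symmetric involution, the inward normal transforms as $n(\si_s(y))=n(y)\si_0$, whence $n_1(\si_s(y))=-n_1(y)$. The only step carrying any content is the chain rule $\nabla v(y)=\nabla u(\si_s(y))\si_0$, which combined with the symmetry of $\si_0$ gives
\[
\frac{\pa v}{\pa n}(y)=\big(\nabla u(\si_s(y))\si_0\big)\cdot n(y)=\nabla u(\si_s(y))\cdot\big(n(y)\si_0\big)=\nabla u(\si_s(y))\cdot n(\si_s(y))=\frac{\pa u}{\pa n}(\si_s(y)),
\]
so that $\left|\tfrac{\pa u}{\pa n}(\si_s(y))\right|^2=\left|\tfrac{\pa v}{\pa n}(y)\right|^2$.

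Feeding these three facts into the reflected integral turns it into $-\int_{\Ga_D\cap\{x_1>s\}}|\pa_n v|^2 n_1\dS$, the sign coming from $n_1\circ\si_s=-n_1$; adding it to the first integral collapses everything to~\eqref{shape1}. The computation is essentially bookkeeping for a measure-preserving reflection, so I do not anticipate a genuine obstacle. The one point that needs care is simply checking that $v=u\circ\si_s$ is well defined and $\C^1$ in a neighbourhood of $\Ga_D\cap\{x_1\ge s\}$ inside $\overline{\Om_s}$, so that $\pa_n v$ is meaningful pointwise: this follows from $u\in\C^1(\overline{\Om_s})$, the smoothness of $\si_s$, the symmetry of the inner ball, and the fact that $\Ga_D$ lies in the interior of $B_{R_1}(0)$, which keeps the reflected neighbourhood inside the outer ball and hence in $\overline{\Om_s}$.
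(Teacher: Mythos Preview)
Your proof is correct and follows essentially the same approach as the paper's: split the shape-derivative integral over $\Ga_D$ at $\{x_1=s\}$, then in the left half change variables via the isometry $\si_s$, using $n\circ\si_s=n\si_0$, $n_1\circ\si_s=-n_1$, and the chain-rule identity $\partial_n v=\partial_n u\circ\si_s$. Your added remarks on the measure-zero equator and the $\C^1$ regularity of $v$ near $\Ga_D\cap\{x_1\ge s\}$ are not in the paper but are harmless refinements.
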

\begin{proof}
	Firstly, we split the right hand side of the equation~\eqref{shape} as follows:
	\begin{equation}
	\tau _1'(s)=-\Int _{\Ga _D\cap \{x_1>s \} } \left|\frac{\pa u}{\pa n}(x)\right|^2 n_1(x) \dS-\Int _{\Ga _D\cap \{x_1< s \}} \left|\frac{\pa u}{\pa n}(x)\right|^2 n_1(x) \dS.
	\end{equation}
	Since $\si _s(\Om _s\cap H_s\cm )\subset \Om _{s}$, we define
	$v(x)=u(\si _s(x))\mbox{ for } x\in \Om _s\cap H_s\cm .$
	Then, we have
	\begin{equation}\label{eqn:grad-reflection}
		\nabla v(x)=\nabla u(\sigma _s(x))\si_0 \quad \forall x\in \Om _s\cap H_s\cm .
	\end{equation}
	Observe that $n(\si _s(x))=n(x)\si _0$ and $n_1(\si _s(x))=-n_1(x)$ for $x\in \Ga _D\cap \{x_1>s \}$. Therefore, from~\eqref{eqn:grad-reflection} we obtain
	
	\noindent
	\begin{equation*}
		\frac{\pa v}{\pa n}(x)=\nabla v(x)\cdot n(x)=\nabla u(\si _s (x))\si _0\cdot n(x)=\nabla u(\si _s(x))\cdot n(x)\si _0 =\frac{\pa u}{\pa n}(\si _s(x)).
	\end{equation*}
	Since $\si _s(\Ga _D)=\Ga _D$, the second integral can be written as
	\begin{eqnarray*}
		\Int _{\Ga _D\cap \{{x_1< s}\}}\left|\frac{\pa u}{\pa n}(x)\right|^2 n_1(x) \dS
		&=&\Int _{\Ga _D\cap \{{x_1> s}\}} \left|\frac{\pa u}{\pa n}(\si _s(x))\right|^2 n_1(\si _s(x)) \dS 
		=-\Int _{\Ga _D\cap \{{x_1> s}\}} \left|\frac{\pa v}{\pa n}(x)\right|^2 n_1(x) \dS.
	\end{eqnarray*}
	This gives~\eqref{shape1}.
%
\end{proof}
Now, we have the following lemma regarding the Neumann data of $u$ on the reflected part in $\Om _s$ of the outer boundary $\Ga _N$.
\begin{lemma}\label{lem:Neumann} 
	Let $\Om _s\in \A_{R_0 R_1}$ be an annualar domain as given in~\eqref{family}. Let $u>0$ be an eigenfunction of \eqref{eigen} corresponding to the first eigenvalue $\tau _1(s)$ in $\Om _s$. 
	Then
	
	\noindent
	\begin{equation*}
		\frac{\pa v}{\pa n}(x)>0 \mbox{ for }x\in \Ga _N \cap \{x_1>s \}.
	\end{equation*}
\end{lemma}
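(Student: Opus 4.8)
The plan is to compute $\frac{\pa v}{\pa n}$ directly by the chain rule on $\Ga_N\cap\{x_1>s\}$ and then reduce the claim entirely to the monotonicity properties of $u$ already established in Theorems~\ref{thm 3} and~\ref{thm 2}. Fix $x\in\Ga_N\cap\{x_1>s\}$ and set $y\dq\si_s(x)$. On $\Ga_N$ the outward unit normal to $\Om_s$ is $n(x)=x/R_1$, so using~\eqref{eqn:grad-reflection} and the symmetry of $\si_0$ (exactly the manipulation in the proof of Proposition~\ref{rewrite}),
\begin{equation*}
\frac{\pa v}{\pa n}(x)=\nabla u(y)\,\si_0\cdot n(x)=\frac{1}{R_1}\,\nabla u(y)\cdot(x\si_0).
\end{equation*}
The first key step is the elementary identity $x\si_0=\si_s(x)-2se_1=y-2se_1$, read off immediately from~\eqref{reflection}. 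Thus the whole statement reduces to proving $\nabla u(y)\cdot(y-2se_1)>0$.

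Next I would locate the reflected point $y$. From~\eqref{reflection} we have $y_1=2s-x_1<s$ since $x_1>s$; moreover a direct computation using $|x|^2=R_1^2$ gives $|y|^2=R_1^2+4s(s-x_1)<R_1^2$, so $y\in B_{R_1}(0)$, and since $B_{R_0}(se_1)$ is invariant under $\si_s$ while $x\notin\overline{B_{R_0}(se_1)}$, also $y\notin\overline{B_{R_0}(se_1)}$. Hence $y\in\Si_s$, the $s$-cap of $\Om_s$, and in particular $y$ is an interior point with $y\neq\pm R_1e_1$. Now split the target direction from $2se_1$ into the affine-radial direction from $se_1$ plus an axial correction:
\begin{equation*}
\nabla u(y)\cdot(y-2se_1)=\nabla u(y)\cdot(y-se_1)-s\,\frac{\pa u}{\pa x_1}(y).
\end{equation*}
By Theorem~\ref{thm 3} the first term is strictly positive (as $y\in\overline{\Om}_s\setminus\{\pm R_1e_1\}$), and by Theorem~\ref{thm 2} we have $\frac{\pa u}{\pa x_1}(y)<0$ on $\Si_s$, so the second term is strictly positive because $s>0$. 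Therefore $\frac{\pa v}{\pa n}(x)>0$, as claimed.

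The crux of the argument is the bookkeeping in the first and last displayed identities: recognizing that reflecting the outward normal direction produces precisely $y-2se_1$, and that this direction decomposes as the affine-radial direction from $se_1$ (controlled by Theorem~\ref{thm 3}) minus $s$ times the axial derivative (controlled by Theorem~\ref{thm 2}). The essential geometric input — and the step I expect to be the real obstacle to get right — is verifying that the reflected point $y=\si_s(x)$ lands inside the $s$-cap $\Si_s$, since that is exactly the region where the axial monotonicity of Theorem~\ref{thm 2} is available; without this localization the sign of $\frac{\pa u}{\pa x_1}(y)$ would be out of reach. Everything else is routine algebra.
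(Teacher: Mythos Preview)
Your proof is correct and follows essentially the same route as the paper: compute $\frac{\pa v}{\pa n}(x)$ via the chain rule and the identity $x\si_0=\si_s(x)-2se_1$, split $y-2se_1=(y-se_1)-se_1$, and conclude by Theorems~\ref{thm 3} and~\ref{thm 2}. Your verification that $y=\si_s(x)$ actually lies in the $s$-cap $\Si_s$ (so that Theorem~\ref{thm 2} applies) is in fact more explicit than the paper's, which simply asserts $\si_s(x)\in\Om_s$ and invokes the two theorems.
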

\begin{proof}
	%
	Observe that $\nabla v(x)=\nabla u(\si _s(x))\si _0$ for $x\in \overline{ \Om _s}$, and the unit normal to $\Om _s$ at $x\in \Ga _N$ is $n(x)=\frac{x}{R_1}$. Let $x\in \Ga _N\cap \{x_1>s \}$. Using \eqref{reflection}, we get that
	\begin{eqnarray*}
		\frac{\pa v}{\pa n}(x)&=&\nabla u(\si _s(x))\si _0 \cdot n(x)=
		\frac{1}{R_1}\nabla u(\si _s(x))\cdot \si _0(x)\\ &=& \frac{1}{R_1}\nabla u(\si _s(x))\cdot (\si _s (x)-2se_1) = \frac{1}{R_1}\nabla u(\si _s(x))\cdot (\si _s (x)-se_1)-\frac{s}{R_1}\frac{\pa u}{\pa x_1}(\si _s (x)).
	\end{eqnarray*}
    For $x\in \Ga _N\cap \{x_1>s \} $, we have $\si _s(x)\in \Om _s$. Therefore, from Theorem~\ref{thm 3} and Theorem~ \ref{thm 2}, we have
    
    \noindent
    \begin{equation*}
    	\nabla u(\si _s(x))\cdot (\si _s (x)-se_1)>0\text{ and }\frac{\pa u}{\pa x_1}(\si _s(x))<0.
    \end{equation*}
    Hence, \[\frac{\pa v}{\pa n} (x)>0\mbox{ for }x\in \Ga _N \cap \{x_1>s \}.\qedhere \]
\end{proof}
Now, we give a proof of Theorem~\ref{thm}. 
%
\begin{proof}[Proof of Theorem~\ref{thm}]
	For $s=0$, $\tau '_1(0)=0$ follows easily from the radial symmetry of the first eigenfunctions. Let $0<s<R_1-R_0$. We consider the  function
	$w(x)\dq v(x)-u(x) \mbox{ for } x\in \overline{\Om _{s} \cap H_s\cm }.$ Clearly, $w$ satisfies the following equation:
	
	\noindent
	\begin{equation*}
		-\De w -\tau _1(s)w=0\mbox{ in } {\Om _{s}\cap H_s\cm }.
	\end{equation*}
	Since $\si _s(\Ga _D)=\Ga _D$ we get that $v(x)=0$ on $\Ga _D\cap \{x_1\geq s \}$. Therefore, from the definition of $w$, it follows that $w=0$ on $(\Ga _D \cap \{x_1\geq s \}) \cup (\Om _{s}\cap \pa H_s)$. From Lemma~\ref{lem:Neumann}, we have
	$\frac{\pa w}{\pa n} (x)=\frac{\pa v}{\pa n}(x)-\frac{\pa u}{\pa n} (x) =\frac{\pa v}{\pa n}(x)>0$  for $x\in \Ga _N\cap \{x_1>s \}$.
	Therefore, $w$ satisfies the following boundary value problem:
	
	\noindent
	\begin{equation*}
		\left.
		\begin{aligned}
			-\De w-\tau _1(s) w&=0 \mbox{ in }\Om _{s}\cap H_s\cm,\\
			w&=0 \mbox{ on } (\Ga_D\cap \{x_1>s \})\cup (\Om _{s}\cap \pa H_s),\\
			\frac{\pa w}{\pa n}&>0\mbox{ on }\Ga_N\cap \{x_1>s \}.
		\end{aligned}
	    \right\}
    \end{equation*}
    From Proposition~\ref{aux:wcp} and the strong maximum principle (part (a) of Proposition~\ref{SMP}), we obtain that $w>0$ in $\Om _s\cap H_s\cm$. Since $w=0$ and $u=0$ on $\Ga_D\cap \{x_1>s \}$, by Hopf's lemma (Proposition~\ref{SMP}), we get $\frac{\pa w}{\pa n}<0$ and $\frac{\pa u}{\pa n}<0$ on $\Ga_D\cap \{x_1>s \}.$ 
    Therefore, $\frac{\pa v}{\pa n}(x)<\frac{\pa u}{\pa n}(x)<0$ for $x\in \Ga _D\cap \{x_1>s \}$. Hence 
    \[\left|\frac{\pa v}{\pa n}(x)\right|^2 -\left|\frac{\pa u}{\pa n}(x)\right|^2>0 \text{ for } x\in \Ga _D\cap \{x_1>s \}.\]
    Consequently, since $n_1(x)<0$ for $x\in \Ga _D\cap \{x_1>s \}$, from \eqref{shape1} we conclude that $\tau _1'(s)<0.$
\end{proof}
	\section{Remarks on other boundary value problems}\label{remarks}
For an annular domain $\Om _s =B_{R_1}(0)\setminus \overline{ B_{R_0} (s e_1)}\in \A_{R_0, R_1}$, 
we consider the following eigenvalue problems:

\medskip
\begin{minipage}{0.45\textwidth}
	\begin{equation}\label{DN}\tag{D-N}
		\left.
		\begin{aligned} 
			-\De u & = \nu u \mbox{ in } \Om _s,\\
			u &= 0 \mbox{ on } \pa B_{R_1}(0),\\
			\frac{\pa u}{\pa n} &= 0 \mbox{ on } \pa B_{R_0}(s e_1);
		\end{aligned}
		\right\}
	\end{equation}
\end{minipage}
\begin{minipage}{0.45\textwidth}
	\begin{equation}\label{DD}\tag{D-D}
		\left.
		\begin{aligned} 
			-\De u & = \la u \mbox{ in } \Om _s,\\
			u &= 0 \mbox{ on } \pa \Om _s.
		\end{aligned}
		\right\}
	\end{equation}
\end{minipage}
\medskip

\noindent Let $\nu _1(s),\ \la _1(s)$ be the first eigenvalues of \eqref{DN} and \eqref{DD} respectively.
By the similar arguments used in proving Theorem~\ref{thm 0}, we can prove the following results. We omit the proofs. 
\begin{theorem}
	Let $\Om _s \in \A_{R_0, R_1}$ be an annular domain as given in~\eqref{family} with $0<s<R_1-R_0$. Let $u>0$ be an eigenfunction of~\eqref{DN} in $\Om _s$ corresponding to the first eigenvalue $\nu _1(s)$.
	Then $u$ has the foliated Schwarz symmetry in $\Om _s$ with respect to the 1-dimensional closed affine half subspace $s e_1 - \R^+ e_1$.
\end{theorem}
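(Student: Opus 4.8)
The plan is to mirror the proof of Theorem~\ref{thm 0}, replacing the family $\H_*$ (which is adapted to the outer Neumann sphere centered at the origin in the problem~\eqref{eigen}) by the translated family $se_1+\H_*$, which is adapted to the \emph{inner} Neumann sphere $\pa B_{R_0}(se_1)$ of~\eqref{DN}. Concretely, I would fix $H\in se_1+\H_*$, write $H=\{x\in\RN:h\cdot(x-se_1)\le 0\}$ with $h\in\SN$ and $h\cdot e_1>0$, and take $\si_H(x)=x-2\big(h\cdot(x-se_1)\big)h$ for the reflection across $\pa H$. Since $\si_H$ fixes $se_1$, it maps the inner ball $B_{R_0}(se_1)$ and the enlarged concentric annulus $A\dq B_{R_1+s}(se_1)\setminus\overline{B_{R_0}(se_1)}$ onto themselves, so the foliated Schwarz symmetrization with respect to $se_1-\R^+e_1$ (equivalently, with respect to the radial domain $A$) is the natural object. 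I let $\overline{u}$ be the zero extension of $u$ to $A$; because $u\in H^1(\Om_s)$ has vanishing trace on the outer Dirichlet boundary $\pa B_{R_1}(0)$, this extension lies in $H^1(A)$.

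The heart of the argument, and the only place that genuinely differs from Theorem~\ref{thm 0}, is to show that the polarization $\overline{u}^H$ again vanishes on the shell $A\setminus\Om_s=B_{R_1+s}(se_1)\setminus\overline{B_{R_1}(0)}$, so that its restriction to $\Om_s$ still satisfies the outer Dirichlet condition. In Theorem~\ref{thm 0} the corresponding fact (Remark~\ref{rmk:fSS}) was immediate, since the Dirichlet ball was centered on the reflection hyperplane; here the Dirichlet sphere $\pa B_{R_1}(0)$ is centered at the origin, which does \emph{not} lie on $\pa H$, and I expect this to be the main obstacle. I would dispose of it by a direct distance estimate: for $x\in(A\setminus\Om_s)\cap H$ set $t\dq h\cdot(x-se_1)\le 0$, so that $h\cdot x=t+s\,(h\cdot e_1)$ and $\si_H(x)=x-2th$, whence
\[
|\si_H(x)|^2=|x|^2-4t\,(h\cdot x)+4t^2=|x|^2-4t\,s\,(h\cdot e_1)\ge |x|^2\ge R_1^2,
\]
because $t\le 0$, $s>0$ and $h\cdot e_1>0$. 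Thus $\si_H(x)\notin B_{R_1}(0)$, and since $\si_H$ preserves both $B_{R_1+s}(se_1)$ and $B_{R_0}(se_1)$, the point $\si_H(x)$ remains in the shell, where $\overline{u}=0$. Hence $\overline{u}^H(x)=\max\{\overline{u}(x),\overline{u}(\si_H(x))\}=0$ on $(A\setminus\Om_s)\cap H$, while on $(A\setminus\Om_s)\cap H\cm$ the value is $\min\{\overline{u}(x),\overline{u}(\si_H(x))\}=0$ as a minimum of nonnegative numbers one of which is $0$; therefore $\overline{u}^H\equiv 0$ on $A\setminus\Om_s$.

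With the leak ruled out, the remaining steps follow exactly as in Theorem~\ref{thm 0}. The invariance of the $L^2$-norm and the Dirichlet energy under polarization on the concentric annulus $A$ (the analogue of Proposition~\ref{propo:polarizer}, which follows from the general theory after translating by $se_1$) shows that $u^H\dq \overline{u}^H|_{\Om_s}$ is a nonnegative function in $\HD{s}$ with $\|u^H\|_2=\|u\|_2$ and $\|\nabla u^H\|_2=\|\nabla u\|_2$, hence a minimizer in the variational characterization of $\nu_1(s)$ and thus again a first eigenfunction of~\eqref{DN}. Since $u$ and $u^H$ are both nonnegative with equal $L^2$-norm and $\nu_1(s)$ is simple, I would conclude $u^H=u$ in $\Om_s$ for every $H\in se_1+\H_*$. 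By the characterization in Proposition~\ref{propo:characterization}(ii), this is precisely the foliated Schwarz symmetry of $u$ with respect to $se_1-\R^+e_1$, which finishes the proof.
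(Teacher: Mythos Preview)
Your proof is correct and follows exactly the route the paper has in mind (the paper only says ``by the similar arguments used in proving Theorem~\ref{thm 0}'' and omits the proof). The one detail you make explicit---and which is the only nontrivial new ingredient compared with Theorem~\ref{thm 0}---is the distance estimate showing that $\si_H$ maps the shell $A\setminus\Om_s$ into itself for $H\in se_1+\H_*$, so that $\overline{u}^H$ does not leak mass back into $\Om_s$ from the extension region; your computation $|\si_H(x)|^2=|x|^2-4ts(h\cdot e_1)\ge|x|^2$ is exactly what is needed and is correct. One small notational slip: when you write ``$u^H$ is a nonnegative function in $\HD{s}$'', the space $\HD{s}$ in this paper is defined with the Dirichlet part on the \emph{inner} sphere (for the~\eqref{eigen} problem), whereas for~\eqref{DN} you want the analogous space with vanishing trace on the outer sphere $\pa B_{R_1}(0)$; the argument itself is unaffected.
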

\begin{theorem}
	Let $\Om _s \in\A_{R_0,R_1}$ be an annular domain as given in~\eqref{family} with $0<s<R_1-R_0$. Let $u>0$ be an eigenfunction of~\eqref{DD} in $\Om _s$ corresponding to the first eigenvalue $\la _1(s)$.
	Then,
	\begin{enumerate}[$(\rm a)$]
		\item $u$ has the foliated Schwarz symmetry in $\Om _s$ with respect to the 1-dimensional closed affine half subspace $-\R^+ e_1$;
		\item $u$ has the foliated Schwarz symmetry in $\Om _s$ with respect to the 1-dimensional closed affine half subspace $s e_1 - \R^+ e_1$.
	\end{enumerate}
\end{theorem}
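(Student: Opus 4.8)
The plan is to replicate, center by center, the polarization argument already used for Theorem~\ref{thm 0}. In each case I would show that polarizing a positive first eigenfunction $u$ produces an admissible competitor with the \emph{same} $L^2$-norm and Dirichlet energy, hence another nonnegative first eigenfunction; simplicity of $\la_1(s)$ then forces the polarization to fix $u$, and Proposition~\ref{propo:characterization} converts this into the two foliated Schwarz symmetries. The only new feature compared with Theorem~\ref{thm 0} is that for~\eqref{DD} the function $u$ must vanish on \emph{both} components of $\pa\Om_s$, so in each case one has to check that polarization preserves the homogeneous condition on the relevant \emph{outer} sphere as well as on the hole.

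For part~(a) I would use the ball $B_{R_1}(0)$ as the radial reference domain and the family $\H_*$. Since $u\in H^1_0(\Om_s)$ vanishes on $\pa B_{R_1}(0)$, its zero extension $\wide u$ lies in $H^1_0(B_{R_1}(0))$ and vanishes on the hole $B_{R_0}(se_1)$. For $H\in\H_*$ the ball is $\si_H$-invariant, so $\wide u^H\in H^1_0(B_{R_1}(0))$; moreover Remark~\ref{rmk:fSS} gives $\wide u^H=0$ on the hole, whence $u^H=\wide u^H|_{\Om_s}\in H^1_0(\Om_s)$. By Proposition~\ref{propo:polarizer} the $L^2$-norm and Dirichlet energy are unchanged, so $u^H$ is again a minimizer, hence a nonnegative first eigenfunction; simplicity of $\la_1(s)$ forces $u^H=u$ in $\Om_s$ for every $H\in\H_*$, and Proposition~\ref{propo:characterization}(i) yields the foliated Schwarz symmetry with respect to $-\R^+e_1$.

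For part~(b) I would instead take the concentric annulus $A\dq B_{R_1+s}(se_1)\setminus\overline{B_{R_0}(se_1)}$ (which contains $\Om_s$) and the shifted family $se_1+\H_*$. The zero extension $\overline u$ now vanishes on the hole \emph{and} on the lune $L\dq A\setminus\Om_s=B_{R_1+s}(se_1)\setminus\overline{B_{R_1}(0)}$, and $\overline u\in H^1_0(A)$ because $u$ vanishes on $\pa B_{R_1}(0)$. Since $A$ is centered at $se_1\in\pa H$ it is $\si_H$-invariant for $H\in se_1+\H_*$, and the hole, being centered at $se_1$, is literally reflection-invariant, so the hole condition is preserved exactly as before. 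The crucial and only delicate step is to verify that $\overline u^H$ still vanishes on the lune $L$, for only then is $\overline u^H|_{\Om_s}$ an admissible competitor in $H^1_0(\Om_s)$; granting this, the argument closes exactly as in part~(a), now through Proposition~\ref{propo:characterization}(ii).

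The hard part is therefore this lune-invariance, which is the outer-boundary analogue of Remark~\ref{rmk:fSS}. Writing $H=\{x:h\cdot(x-se_1)\le0\}$ with $h\cdot e_1>0$ and $\si_H(x)=x-2\be\,h$, where $\be\dq h\cdot(x-se_1)$, the whole point is the identity
\[
|\si_H(x)|^2=|x|^2-4\be\, s\,(h\cdot e_1).
\]
For $x\in L\cap H$ one has $\be\le0$ and $|x|>R_1$, so the right-hand side is $\ge|x|^2>R_1^2$; hence $\si_H(x)\notin\overline{B_{R_1}(0)}\supseteq\overline{\Om_s}$ and $\overline u(\si_H(x))=0$, giving $\overline u^H(x)=\max\{0,0\}=0$, while on $L\cap H\cm$ one trivially has $\overline u^H=\min\{0,\cdot\}=0$. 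Thus $\overline u^H\equiv0$ on $L$, as needed. I expect this sign computation (together with the easier, reflection-exact statement for the hole) to be the main obstacle; once it is in place, the three ingredients—invariance of the Rayleigh quotient under polarization, simplicity of $\la_1(s)$, and the characterization of Proposition~\ref{propo:characterization}—combine verbatim with the proof of Theorem~\ref{thm 0}.
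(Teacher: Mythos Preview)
Your proposal is correct and follows precisely the route the paper indicates: the paper omits the proof entirely, stating only that it proceeds ``by the similar arguments used in proving Theorem~\ref{thm 0}.'' Your lune-invariance computation for part~(b)---the identity $|\si_H(x)|^2=|x|^2-4\be s(h\cdot e_1)$ showing that $\si_H$ maps $L\cap H$ outside $\overline{B_{R_1}(0)}$---is exactly the outer-boundary analogue of Remark~\ref{rmk:fSS} and supplies the one admissibility detail the paper leaves implicit; with it in hand, norm invariance under polarization, simplicity of $\la_1(s)$, and Proposition~\ref{propo:characterization} finish the argument just as you describe.
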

\noindent
\textbf{Open Problems:} The monotonicity of the first eigenvalue $\la _1(s)$ of~\eqref{DD} on $[0,R_1-R_0)$ is proved in~\cite{Kesavan,Harrell}, see Introduction for more details on this problem.  Notice that, for~\eqref{DN}, the first eigenfunctions have the foliated Schwarz symmetry, however the other symmetries, such as the affine radiality and the monotonicity in the axial direction clearly fail, see Figure~\ref{fig:conj4}. Since these geometries play the important roles in our proof for the monotonicity of $\tau _1(s)$ for \eqref{eigen} problem, we anticipate that $s\longmapsto \nu _1(s)$ is not monotone on $[0,R_1-R_0)$. Indeed, there is numerical evidence to support our intuition (see Figures~\ref{fig:conj1}-\ref{fig:conj3}). From the numerical data, we observe that 
\begin{observation}\label{obs1}
	There exists a unique $s_0\in(0,R_1-R_0]$ such that \[\nu_1'(s)<0\mbox{ on } (0,s_0),\ \nu_1'(s)>0\mbox{ on } (s_0,R_1-R_0) \mbox{ and }\nu_1'(s_0)=0. \]
\end{observation}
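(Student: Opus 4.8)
The plan is to mirror the strategy of Section~\ref{Mono}: first obtain a Hadamard-type formula for $\nu_1'(s)$, then determine its sign. The decisive structural difference from \eqref{eigen} is that here the moving inner boundary $\pa B_{R_0}(se_1)$ carries the \emph{Neumann} condition, whereas for \eqref{eigen} it carried the Dirichlet condition and produced the sign-definite expression \eqref{shape}. With the vector field $V=\rho e_1$ of Section~\ref{Mono} and the normalization $\int_{\Om_s}u^2\dx=1$, the shape derivative on a Neumann boundary has the form
\begin{equation*}
\nu_1'(s)=\int_{\pa B_{R_0}(se_1)}\big(|\nabla_{\Ga}u|^2-\nu_1(s)\,u^2\big)\,n_1(x)\,\dS,
\end{equation*}
where $\nabla_{\Ga}u$ denotes the tangential gradient along the inner sphere. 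Unlike \eqref{shape}, this integrand is not sign-definite; moreover, as already noted in Section~\ref{remarks}, the affine-radial monotonicity and the axial monotonicity of the eigenfunction both fail for \eqref{DN}, so the reflection-and-comparison argument behind Theorem~\ref{thm} is unavailable. This is the source of all the difficulty.

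Second, I would analyse the two endpoints. At $s=0$ the domain is the concentric annulus and is invariant under $x_1\mapsto -x_1$, whence $\Om_s$ and $\Om_{-s}$ are isospectral, $\nu_1(\cdot)$ is even near $0$, and $\nu_1'(0)=0$. The content at this end is to show that $s=0$ is a strict local maximum, i.e.\ $\nu_1''(0)<0$, which would give $\nu_1'(s)<0$ for small $s>0$; I would extract the sign of $\nu_1''(0)$ from second-order shape-perturbation theory about the radial configuration, using the foliated Schwarz symmetry of the first eigenfunction (with respect to $se_1-\R^+e_1$) established in Section~\ref{remarks}. At the opposite end, as $s\to R_1-R_0$ the inner Neumann ball approaches the outer Dirichlet sphere and the annulus pinches; a boundary-layer analysis near the emerging contact point should show that the pinching forces $\nu_1'(s)>0$. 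Combined with $\nu_1'(s)<0$ near $s=0$, the intermediate value theorem then yields at least one $s_0$ with $\nu_1'(s_0)=0$.

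The third and hardest step is uniqueness of $s_0$ together with the strict sign of $\nu_1'$ on each side of it. Because the eigenfunction lacks the monotonicity properties that gave a single sign for $\tau_1'$, uniqueness cannot be read off from a sign-definite integrand. Instead I would try to show that $\nu_1'$ has exactly one interior zero, for instance by establishing that $s\mapsto\nu_1(s)$ is real-analytic on $(0,R_1-R_0)$ (analytic perturbation theory for the simple eigenvalue $\nu_1(s)$) and then exhibiting a strictly monotone auxiliary quantity---such as $\nu_1'(s)$ divided by a suitable positive weight, or a one-sided concavity/convexity of $\nu_1$ on $(0,s_0)$ and $(s_0,R_1-R_0)$---that forces a single sign change. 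The degenerate possibility $s_0=R_1-R_0$ (no increasing part) would be covered by the same analysis.

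The main obstacle is precisely this uniqueness. Once affine-radiality and axial monotonicity are lost, the comparison-principle machinery that made Theorem~\ref{thm} work no longer produces a definite sign, and I do not see how to rule out multiple critical points of $\nu_1$ without either a genuine structural monotonicity of $\nu_1'$ or a rigorous control of the pinching asymptotics as $s\to R_1-R_0$. This is why the statement is recorded as an observation supported by numerics rather than as a theorem.
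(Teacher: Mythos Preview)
The paper does not prove this statement. Observation~\ref{obs1} is presented in Section~\ref{remarks} under the heading \textbf{Open Problems}, is supported only by the numerical plots in Figures~\ref{fig:conj1}--\ref{fig:conj3}, and the authors say explicitly that ``providing analytical explanations for the observations above seems to be a challenging problem.'' There is therefore no proof in the paper to compare your attempt against.

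Your proposal is not a proof either, and to your credit you say so in the final paragraph. What you have written is a correct diagnosis of \emph{why} the machinery behind Theorem~\ref{thm} does not transfer to \eqref{DN}: the Hadamard formula on the moving Neumann boundary becomes
\[
\nu_1'(s)=\int_{\partial B_{R_0}(se_1)}\big(|\nabla_{\Gamma}u|^2-\nu_1(s)\,u^2\big)\,(V\cdot n)\,\dS,
\]
whose integrand has no definite sign, and the affine-radial and axial monotonicity of the eigenfunction (Theorems~\ref{thm 3} and~\ref{thm 2}) genuinely fail for \eqref{DN}, so the reflection/comparison step that drove Theorem~\ref{thm} is unavailable. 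Your endpoint remarks ($\nu_1'(0)=0$ by the evenness $\nu_1(s)=\nu_1(-s)$; a pinching analysis as $s\to R_1-R_0$) are reasonable heuristics toward \emph{existence} of a critical point, but the decisive step---uniqueness of $s_0$ and strict sign of $\nu_1'$ on each side---is left as a wish (``try to show\ldots'', ``exhibiting a strictly monotone auxiliary quantity''). That is exactly the gap the paper leaves open, and nothing in your outline closes it.

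In short: there is no discrepancy to report, because neither the paper nor your proposal contains a proof; your write-up accurately explains why the problem is open.
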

\begin{observation}\label{obs2}
	There exists a critical value, say $r_0$, of the ratio $\frac{R_0}{R_1}$ such that:
	\begin{enumerate}[(a)]
		\item For $\frac{R_0}{R_1}<r_0$, Observation~\ref{obs1} holds;
		\item For $\frac{R_0}{R_1}\geq r_0$, we have
		$\nu _1'(s)<0 \mbox{ for } s\in (0,R_1-R_0).$
	\end{enumerate}
\end{observation}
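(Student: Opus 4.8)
The plan is to first derive a Hadamard-type shape derivative for $\nu_1(s)$, analogous to the formula~\eqref{shape} obtained for $\tau_1(s)$, and then to analyse the sign of the resulting boundary integrand. The crucial structural difference is that for~\eqref{DN} the \emph{moving} boundary $\pa B_{R_0}(se_1)$ now carries the Neumann condition, so the Hadamard formula no longer reduces to the manifestly sign-definite expression $-\int |\pa u/\pa n|^2\, n_1\,\dS$. Instead, moving a Neumann portion of the boundary produces an integrand built from the tangential gradient and the zeroth-order term, of the schematic form $\int_{\pa B_{R_0}(se_1)} \big(|\nabla_{\mathrm t} u|^2-\nu_1(s)\,u^2\big)\,n_1\,\dS$, where $\nabla_{\mathrm t}$ is the tangential gradient; this is not sign-definite. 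That is exactly why the clean comparison-principle argument proving Theorem~\ref{thm} breaks down, and why we should expect the richer, non-monotone behaviour recorded in Observation~\ref{obs1}.

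Next I would pin down the two ends of the interval. At $s=0$ the radial symmetry of the concentric configuration forces $\nu_1'(0)=0$, exactly as for $\tau_1$; to decide the direction of monotonicity just past the origin I would compute the sign of $\nu_1''(0)$ (equivalently, expand the Hadamard integrand to first order in $s$), and I expect this to be negative for every admissible ratio, giving $\nu_1'(s)<0$ on a right-neighbourhood of $0$. At the opposite end $s\uparrow R_1-R_0$, where the inner ball becomes internally tangent to the outer sphere, I would examine the limiting boundary behaviour of the eigenfunction near the contact point to determine the sign of $\nu_1'$ there; whether this sign is positive or negative should depend on $R_0/R_1$, and the transition between the two cases is precisely what produces the critical ratio $r_0$ of Observation~\ref{obs2}.

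The heart of the matter is then an analysis in the two asymptotic regimes of $R_0/R_1$. For $R_0/R_1$ small I would use matched asymptotic (capacity-type) expansions: removing a small Neumann ball is a regular perturbation of the Dirichlet eigenvalue of $B_{R_1}(0)$, and the leading $s$-dependent correction, read off from the unperturbed Dirichlet eigenfunction along the segment $se_1$, should be non-monotone in $s$ and single out a unique interior critical point $s_0$, establishing Observation~\ref{obs1}. For $R_0/R_1$ close to $1$ the domain degenerates into a thin annular layer, and a thin-domain reduction to an effective one-dimensional problem along the layer should yield a definite (negative) sign of $\nu_1'$ throughout $(0,R_1-R_0)$, giving case~(b) of Observation~\ref{obs2}. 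The critical value $r_0$ would then be characterised as the threshold at which the interior critical point $s_0$ predicted by the first expansion merges with the endpoint $R_1-R_0$.

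The main obstacle is that, with the symmetry and comparison machinery of Sections~\ref{Symm and Mono}--\ref{Mono} unavailable (the affine-radial and axial monotonicities demonstrably fail for~\eqref{DN}), one must control the competing positive and negative contributions of the non-sign-definite Hadamard integrand \emph{quantitatively} rather than qualitatively. Proving the \emph{uniqueness} of $s_0$ in Observation~\ref{obs1}---which amounts to showing that $\nu_1'$ changes sign exactly once, e.g.\ via a convexity or single-crossing property of $s\mapsto\nu_1(s)$---and establishing the \emph{sharpness} of the threshold $r_0$ in Observation~\ref{obs2} are the genuinely hard points; I expect that matching the two asymptotic regimes at an explicit $r_0$ will require estimates finer than the tools developed here, which is why the statement is recorded as an observation supported by numerics rather than as a theorem.
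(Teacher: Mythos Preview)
There is nothing to compare: the paper does \emph{not} prove Observation~\ref{obs2}. It is stated in Section~\ref{remarks} under the heading ``Open Problems'' as a phenomenon observed from numerical data (Figures~\ref{fig:conj1}--\ref{fig:conj3}), and the paper explicitly says ``Providing analytical explanations for the observations above seems to be a challenging problem.'' So no proof exists in the paper against which your proposal can be checked.

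Your write-up is therefore a research outline rather than a proof, and you yourself acknowledge this in the last paragraph. As an outline it is sensible: the Hadamard formula for a moving Neumann boundary indeed produces the non-sign-definite integrand $\int_{\Ga_N}(|\nabla u|^2-\nu_1 u^2)(V\cdot n)\,\dS$ (this is exactly the $\Ga_N$ term in~\eqref{shape formula}), the symmetry/monotonicity machinery of Sections~\ref{Symm and Mono}--\ref{Mono} is unavailable for~\eqref{DN}, and the small-obstacle and thin-annulus asymptotics are the natural probes of the two ends of the ratio range. But the decisive steps you identify --- the uniqueness of the interior critical point $s_0$, the sign of $\nu_1'$ near $s\uparrow R_1-R_0$, and the existence and sharpness of a single threshold $r_0$ --- remain entirely unproved in your sketch, just as they do in the paper. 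In short, your proposal correctly diagnoses why the problem is open but does not close it; the paper makes no claim to have done so either.
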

Providing analytical explanations for the observations above seems to be a challenging problem.
\begin{figure}[tb]\label{fig2}
	\centering
	\begin{subfigure}{0.49\linewidth}
		\centering
		\includegraphics[width=0.5\linewidth]{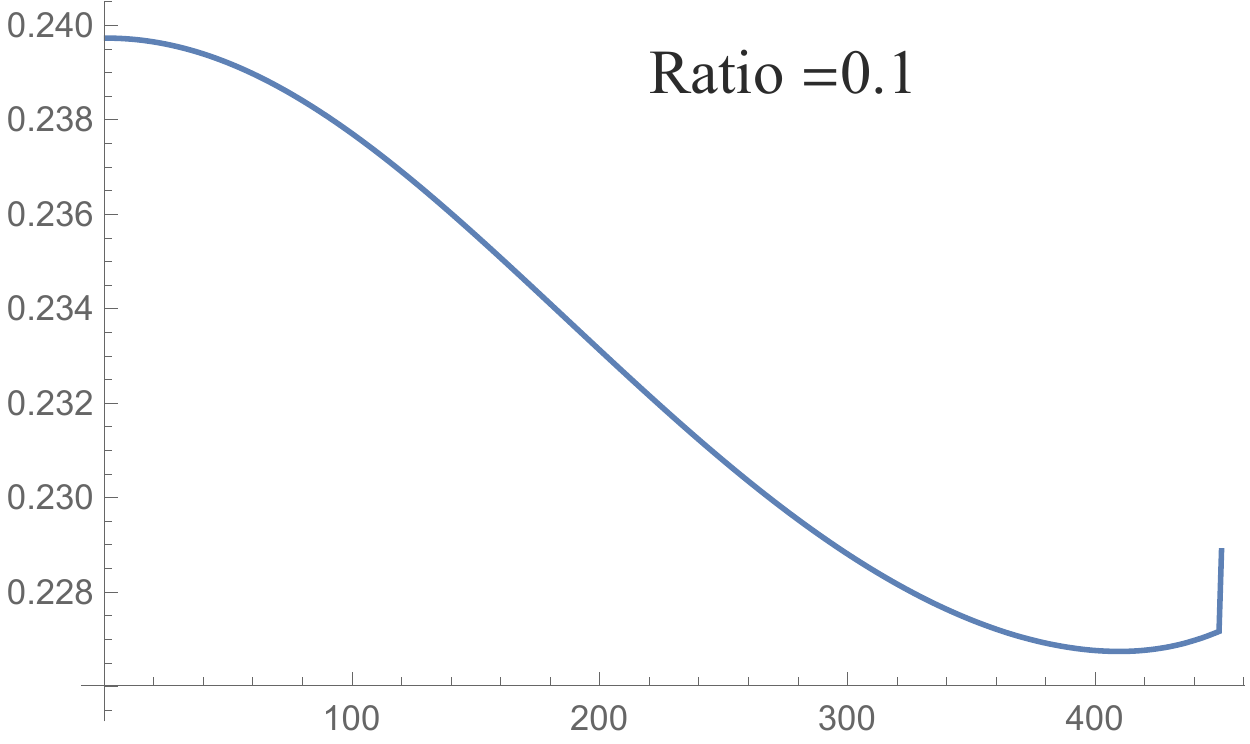}
		\subcaption{}
		\label{fig:conj1}
	\end{subfigure}
	\begin{subfigure}{0.49\linewidth}
		\centering
		\includegraphics[width=0.5\linewidth]{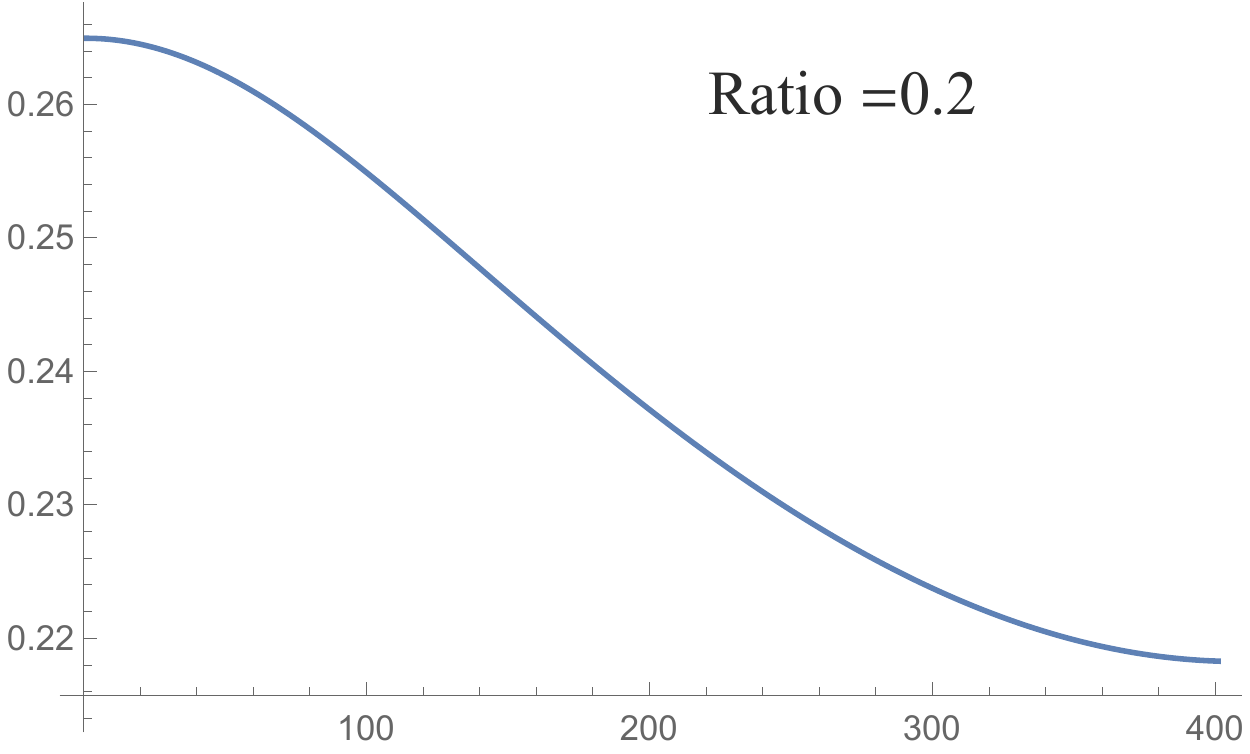}
		\subcaption{}
		\label{fig:conj2}
	\end{subfigure}
	\begin{subfigure}{0.49\linewidth}
		\centering
		\includegraphics[width=0.5\linewidth]{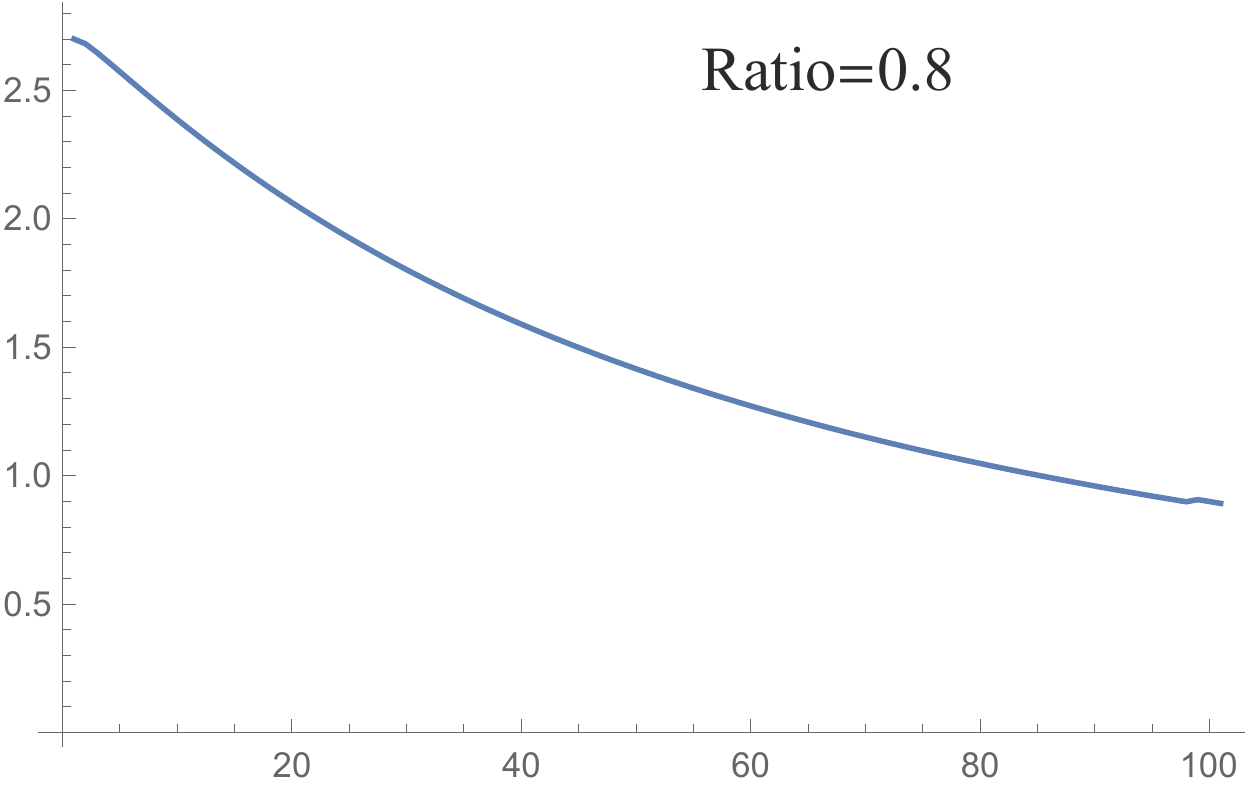}
		\subcaption{}
		\label{fig:conj3}
	\end{subfigure}
	\begin{subfigure}{0.49\linewidth}
		\centering
		\includegraphics[width=0.5\linewidth]{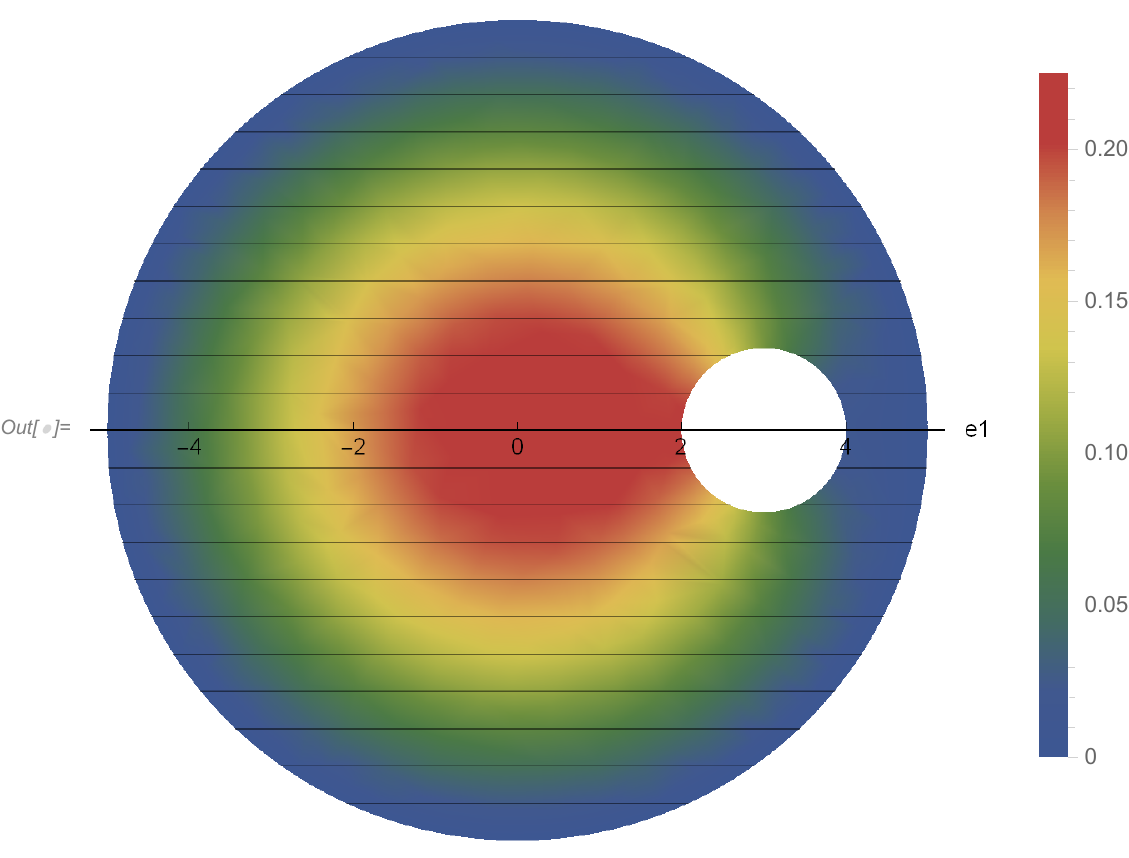}
		\subcaption{}
		\label{fig:conj4}
	\end{subfigure}
	\caption{(A)-(C) Monotonicity of $\nu _1(s)$ on $(0, R_1-R_0)$ with $R_1=5$ and $R_0=\mbox{Ratio} \times R_1$. (D) Monotonicity along the axial direction of an eigenfunction corresponding to $\nu _1(s)$.}
\end{figure}
\appendix
\section{Shape calculus for the first eigenvalue}\label{Shape}
In this section, we derive the shape derivative formula for the first eigenvalue of the Laplace operator with the mixed boundary conditions on a bounded domain. First, we recall the concept of a shape (domain) functional on an admissible family of subsets of $\RN$, and we study its variations under certain domain perturbations.
\begin{definition}[Shape functional]
	Let $\emptyset \neq D\subseteq \RN$ be an open set and let $\P(D)$ be the power set of $D$. A shape functional $J$ is a map from an admissible family $\A \subseteq \P(D)$ into $\R$.
\end{definition}
To study the variations of a shape functional $J$ at a domain $\Om \in \A$, first we consider a family of perturbations of $\Om \subset \RN$ given as follows: 
%
\begin{equation}\label{trans}
    \left.
	\begin{array}{c}
	\text{for a fixed vector field } V\in W^{3,\infty }(\RN ,\RN) \text{ we consider the transformations}\\ \Phi _\eps (x)=(I+\eps V)x=x+\eps V(x) \quad \mbox{for } x\in \RN ,\ \eps \in [0,\eps _0) \mbox{ for some small } \eps _0 >0;
	\end{array}
	\right\}
\end{equation}
%
and a family of perturbed domains
\begin{equation}\label{admisfamily}
\A \dq \Big\{\Om _\eps =\Phi _\eps (\Om ): \eps \in [0,\eps _0 )\Big\}\text{
	for some small }\eps _0 >0.
\end{equation}
\begin{definition}[Eulerian derivative]\label{Def:shape}
	Let	$V$ be a vector field as given in~\eqref{trans}, and $J$ be a shape functional. The Eulerian derivative of the shape functional $J$ at $\Om \in \A$ in the direction of $V$ is defined as the following limit, if it exists:
	
	\noindent
	\begin{equation*}
		dJ(\Om ;V)=\lim\limits_{\eps \searrow 0} \frac{J(\Om _\eps ) -J(\Om )}{\eps },
	\end{equation*}
	where $\Om _\eps =\Phi_\eps (\Om )\mbox{ and }\Phi _\eps$ is given by~\eqref{trans}.
\end{definition}
%
In many of the physical situations, a shape functional $J(\Om )$ depends on the domain via the solution $u(\Om )$ of a boundary value problem defined in $\Om $. We call $u(\Om )$ as the shape function. The Eulerian derivative of such $J(\Om )$ depends on so called the shape derivative of $u(\Om )$. 
\begin{definition}[Material derivative]
	\label{defn:mat_deri}
	Let	$V$ be a vector field as given in~\eqref{trans}. For a shape function $u(\Om )\in W^{1,p}(\Om )$ with $1\leq p <\infty $, the material derivative $\dot{u}=\dot{u}(\Om ;V)$ in the direction of $V$ is defined as the following limit, if it exists:
	
	\noindent
	\begin{equation*}
		\dot{u}(\Om ;V)=\lim \limits _{\eps \searrow 0} \frac{1}{\eps } \Big(u(\Om _\eps )\circ \Phi _\eps - u(\Om ) \Big)\mbox{ in } W^{1,p}(\Om ),
	\end{equation*}
	where
	$\Om _\eps =\Phi _\eps (\Om ) \mbox{ and }\Phi _\eps$ is given by \eqref{trans}.
\end{definition}
\begin{definition}[Shape derivative]
	\label{defn:shape_der}
	Let	$V$ be a vector field as given in~\eqref{trans}. For a shape function $u(\Om )\in W^{1,p}(\Om )$, if the material derivative $\dot{u}(\Om ;V)$ exists and $\nabla u(\Om ) \cdot V\in L^p(\Om ),$ then  the shape derivative $u'=u'(\Om ;V)$ in the direction of $V$ is defined as
	\begin{equation}\label{eq:shape}
		u'(\Om ;V)=\dot{u}(\Om ;V)-\nabla u(\Om )\cdot V.
	\end{equation} 
\end{definition}
\begin{remark}
	We observe that, for a transformation $\Phi _\eps $ given in \eqref{trans}, for any compact set $K\subset \Om $ there exists $\eps _K>0$ such that $\Phi _\eps (K)\subset \Om $ for $\eps \leq \eps _K$. Therefore, the shape derivative of $u$ can be defined locally on every compact set as the derivative of $\eps \longmapsto \left. u(\Om _\eps )\right |_{K}$ in $K$. The material derivative of $u$ is the derivative of $\eps \longmapsto u(\Om _\eps )\circ \Phi _\eps $ in $\Om $. If the material derivative of $u$ exists then, from \cite[Corollary~5.2.3 or Lemma~5.2.7]{Henrot-Book}, the shape derivative exists and is given by \eqref{eq:shape}.
\end{remark}
\subsection{Shape derivative of the eigenfunctions and the eigenvalues}
Let $\Om \subset \RN$ be a bounded domain and let $\pa \Om =\Ga _D\cup \Ga _N$. We consider the following eigenvalue problem on $\Om :$
\begin{equation}\label{eigen1}
\left.
\begin{aligned} 
-\De u & = \tau u \mbox{ in } \Om ,\\
u &= 0 \mbox{ on } \Ga _D,\\
\frac{\pa u}{\pa n} &= 0  \mbox{ on } \Ga _N.
\end{aligned}
\right\}
\end{equation}
Next, we give the existence of the material derivative of the normalize eigenfunction $u(\Om )$ corresponding to the first eigenvalue $\tau _1(\Om )$ of~\eqref{eigen1}. See~\cite[Lemma 3.18, page 154]{Soko}, for the material derivative for a non-homogeneous problem with the mixed boundary conditions. For a vector field $V$ given in~\eqref{trans}, let $\Om _\eps $ be 
as given in \eqref{admisfamily} with the boundary $\pa \Om _\eps = \Ga _D^\eps \sqcup \Ga _N^\eps $, where $\Ga _i^\eps =\Phi _\eps (\Ga _i)$ for $i=D,N$. Let $(u_\eps , \tau _1(\eps ))$ be the first eigenpair of~\eqref{eigen1} in $\Om _\eps $ such that $\int _{\Om _\eps }u_\eps ^2\dx =1.$ We adapt the ideas of \cite[Theorem~5.3.2 and Theorem~5.7.2]{Henrot-Book} and show that both the material and the shape derivatives of $u(\Om )$ exist. 
\begin{proposition}\label{mat_der}
	Let $\tau _1(\Om )$ be the first eigenvalue of~\eqref{eigen1} in $\Om $ and let $u(\Om )$ be a corresponding eigenfunction with $\int _{\Om }u(\Om )^2\dx=1.$ Let	$V$ be a vector field as given in~\eqref{trans}. Then, 
	both the material derivative and the shape derivative of $u(\Om )$ exist, and also the Eulerian derivative of the shape functional $\tau _1(\Om )$ exists.
\end{proposition}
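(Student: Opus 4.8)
The plan is to freeze the domain and apply the implicit function theorem, the simplicity of $\tau_1(\Om)$ being the decisive ingredient. First I would transport the perturbed eigenpairs to $\Om$: for small $\eps$ let $(u_\eps,\tau_1(\eps))$ be the normalized first eigenpair of~\eqref{eigen1} on $\Om_\eps=\Phi_\eps(\Om)$ and set $u^\eps:=u_\eps\circ\Phi_\eps$. Since $\Phi_\eps(\Ga_D)=\Ga_D^\eps$, the pullback preserves the Dirichlet trace, so $u^\eps$ lies in the \emph{fixed} Hilbert space $H^1_{\Ga _D}(\Om)$. Changing variables $y=\Phi_\eps(x)$ in the weak form on $\Om_\eps$ (the Neumann part being natural) turns the problem into
\[
\int_\Om A(\eps)\,\nabla u^\eps\cdot\nabla\varphi\dx=\tau_1(\eps)\int_\Om J(\eps)\,u^\eps\varphi\dx\qquad\forall\varphi\in H^1_{\Ga _D}(\Om),
\]
where $J(\eps)=\det(D\Phi_\eps)$ and $A(\eps)=J(\eps)\,(D\Phi_\eps)^{-1}(D\Phi_\eps)^{-T}$, while the normalization becomes $\int_\Om J(\eps)\,(u^\eps)^2\dx=1$. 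As $\Phi_\eps=I+\eps V$ with $V\in W^{3,\infty}$, the maps $\eps\mapsto A(\eps),J(\eps)$ are real-analytic into $L^\infty(\Om)$, with $A(0)=I$ and $J(0)=1$.

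Next I would set up the implicit function theorem on this frozen space. Writing $a(\eps;w,\varphi)=\int_\Om A(\eps)\nabla w\cdot\nabla\varphi\dx$ and $m(\eps;w,\varphi)=\int_\Om J(\eps)\,w\varphi\dx$, define
\[
F\colon(-\eps_0,\eps_0)\times H^1_{\Ga _D}(\Om)\times\R\to \big(H^1_{\Ga _D}(\Om)\big)^*\times\R,\quad
F(\eps,w,\lambda)=\big(a(\eps;w,\cdot)-\lambda\,m(\eps;w,\cdot),\ m(\eps;w,w)-1\big).
\]
Then $F(0,u,\tau_1)=0$ with $(u,\tau_1)=(u(\Om),\tau_1(\Om))$, and the smooth $\eps$-dependence of $A,J$ together with boundedness of the forms make $F$ of class $C^1$.

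The heart of the argument is that $D_{(w,\lambda)}F(0,u,\tau_1)$ is an isomorphism. This differential sends $(\psi,\mu)$ to $\big(a(0;\psi,\cdot)-\tau_1\,m(0;\psi,\cdot)-\mu\,m(0;u,\cdot),\ 2\,m(0;u,\psi)\big)$, i.e. in strong form to $-\De\psi-\tau_1\psi-\mu u$ paired with $2\int_\Om u\psi\dx$. The operator $-\De-\tau_1$ on $H^1_{\Ga _D}(\Om)$ is self-adjoint and Fredholm of index zero (a compact perturbation, via Rellich's theorem), and by \emph{simplicity} of $\tau_1(\Om)$ its kernel and cokernel both equal $\R u$. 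For data $(f,c)$ the solvability condition $\langle f+\mu u,u\rangle=0$ fixes $\mu=-\langle f,u\rangle$ (using $\int_\Om u^2\dx=1$); the corresponding $\psi$ is then unique modulo $\R u$, and the scalar constraint $2\int_\Om u\psi\dx=c$ removes that last degree of freedom. Hence the bordered operator is a continuous bijection, so an isomorphism, and the implicit function theorem yields a $C^1$ branch $\eps\mapsto(w(\eps),\lambda(\eps))$ through $(u,\tau_1)$. By continuity of the simple first eigenpair under the coefficient variation, the transported pair $(u^\eps,\tau_1(\eps))$ stays near $(u,\tau_1)$ and solves $F=0$, so local uniqueness forces it to coincide with this branch.

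Finally, differentiating at $\eps=0$ produces $\dot u=\frac{d}{d\eps}u^\eps\big|_{\eps=0}$ in $H^1_{\Ga _D}(\Om)$, which is precisely the material derivative of Definition~\ref{defn:mat_deri}, and it gives the existence of $\tau_1'(0)$, the Eulerian derivative. Since $u\in H^1(\Om)$ and $V\in L^\infty$, we have $\nabla u\cdot V\in L^2(\Om)$, so by Definition~\ref{defn:shape_der} the shape derivative $u'=\dot u-\nabla u\cdot V$ exists as well. The main obstacle is this invertibility step: one must confirm that the mixed boundary data keep $-\De-\tau_1$ Fredholm of index zero on $H^1_{\Ga _D}(\Om)$ and that simplicity of $\tau_1(\Om)$ collapses its kernel to $\R u$, so that adjoining the scalar $\mu$ and the normalization constraint yields a genuine isomorphism; checking the $C^1$-regularity of $F$ is routine and is where $V\in W^{3,\infty}$ is comfortably sufficient.
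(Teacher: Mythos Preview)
Your proposal is correct and follows essentially the same route as the paper: transport the eigenpair to the fixed domain, set up a $C^1$ map $F(\eps,w,\lambda)$ encoding the transformed eigenvalue equation together with a normalization, and apply the implicit function theorem at $(0,u,\tau_1)$ to produce a $C^1$ branch, from which the material, shape, and Eulerian derivatives follow. The only differences are cosmetic: you work in $(H^1_{\Ga_D}(\Om))^*$ and spell out the Fredholm/bordering argument for invertibility of $D_{(w,\lambda)}F(0,u,\tau_1)$, whereas the paper maps into $H^{-1}(\Om)\times\R$ and cites \cite[Lemma~5.7.3]{Henrot-Book} for that step; your normalization $\int_\Om J(\eps)w^2\dx=1$ is in fact the cleaner choice since it is exactly the pullback of $\int_{\Om_\eps}u_\eps^2\dx=1$.
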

\begin{proof}
	Let $u_\eps \in \hd{\eps }$ be the eigenfunction corresponding to the first eigenvalue $\tau _1(\Om _\eps )$ of~\eqref{eigen1} on $\Om _\eps $ for $\eps \in [0,\eps _0)$ with the normalization $\int_{\Om _\eps }u_\eps ^2\dx=1$ , i.e.,

	\noindent
	\begin{equation*}
	\int _{\Om _\eps } \nabla u_\eps \cdot \nabla \varphi \dx -\tau _1(\Om _\eps ) \int _{\Om _\eps } u_\eps \varphi \dx =0 \quad \forall \varphi \in \hd{\eps }.
	\end{equation*}
	Let $v_\eps $ be $v_\eps =u_\eps \circ \Phi _\eps$ in $\Om $ for $\eps \in [0,\eps _0)$. For $\eps =0,$ we have $\Om _0 =\Om $ and $v_0=u_0=u(\Om )$. We observe that $v_\eps \in \HD{}$ and satisfies
	\begin{equation}\label{eqn:material1}
		\int _{\Om } \big(A(\eps )\nabla v_\eps \big)\cdot \nabla \psi \dx-\tau _1(\Om _\eps ) \int _{\Om }\ga (\eps ) v_\eps \psi \dx =0 \quad \forall \psi =\varphi \circ \Phi _\eps \in \HD{},
	\end{equation}
	where $A(\eps )=\ga (\eps )D\Phi _\eps ^{-1} ({D\Phi _\eps ^{-1}})^T,$ $\ga (\eps ) =\left |\det(D\Phi _\eps )\right | \mbox{ for }\eps \in [0,\eps _0 ).$ Thus, $v_\eps $ satisfies the following equation:
	
	\noindent
	\begin{equation*}
		-\mathrm{div}(A(\eps )\nabla v_\eps )=\tau _1(\Om _\eps )\ga (\eps ) v_\eps \mbox{ in }\Om .
	\end{equation*}
	Now, we consider the 
	function $\F :[0,\eps _0)\times \HD{}\times \R \longrightarrow H^{-1}(\Om )\times \R$ defined by
	
	\noindent
	\begin{equation*}
		\F (\eps ,v,\tau )=\left (-\mathrm{div}(A(\eps )\nabla v )-\tau \ga (\eps ) v , \int_{\Om }v^2 \dx -1\right ).
	\end{equation*}
	Then $\F$ is well-defined and continuous. We observe that 
	$\F(\eps ,v_\eps , \tau _1(\Om _\eps ))=(0,0)\ \text{(by~\eqref{eqn:material1})}$ 
	and
	
	\noindent
	\begin{equation*}
		D_{v,\tau _1}\F(0,u(\Om ), \tau _1(\Om ))(\phi ,\tau _1)
		=\left(-\De \phi -\tau _1(\Om )\phi -\tau _1u(\Om ) ,2\int_{\Om }u(\Om ) \phi \dx \right).
	\end{equation*}
	From~\cite[Lemma 5.7.3]{Henrot-Book}, it follows that $D_{v,\tau _1}\F(0,u , \tau _1(\Om ))$ is a bijection from $\HD{} \times \R$ onto $H^{-1}(\Om ) \times \R$. Since $D_{v,\tau _1}\F(0,u , \tau _1(\Om ))$ is a continuous linear map, it 
	is an isomorphism from $\HD{} \times \R$ onto $H^{-1}(\Om ) \times \R$. Thus, by the implicit function theorem, there exists a unique map $\eps \longmapsto (v(\eps ),\tau (\eps ))\in \HD{} \times \R $ of class $\C^1 $ on a neighborhood $\mathcal{U}$ of $0$ 
	into $\HD{} \times \R$ such that
 	
 	\noindent
 	\begin{equation*}
 		v(0)=u(\Om ),\quad \tau (0)=\tau _1(\Omega ), \quad \F(\eps ,v(\eps ),\tau _1(\eps ))=(0,0) \mbox{ for } \eps \in \mathcal{U}. 
 	\end{equation*}
 	By the uniqueness, it follows that $v(\eps )=v_\eps \mbox{ and }\tau (\eps )=\tau _1(\Om _\eps )$ for $\eps \in \mathcal{U}$. Since the map $\eps \longmapsto (v_\eps , \tau _1(\Om _\eps ))$ is differentiable at $\eps =0$, the Eulerian derivative of $\tau _1(\Om )$ and the material derivative of $u(\Om )$ in the direction of $V$ exist. Hence, from \cite[Corollary~5.2.3 or Lemma~5.2.7]{Henrot-Book}, the map $\eps \longmapsto u_\eps$ is differentiable at $\eps =0$, i.e., the shape derivative of $u(\Om )$ in the direction of $V$ exists.
\end{proof}
\begin{lemma}
	Let $u\in \HD{} $ be an eigenfunction corresponding to the first eigenvalue $\tau _1(\Om )$ of~\eqref{eigen1} in $\Om $. Let	$V$ be a vector field as given in~\eqref{trans}. Then, the material derivative $\dot{u}=\dot{u}(\Om ;V)$ of the eigenfunction $u$ is in $\HD{}$ and it is the unique solution of the following integral equation 
    \begin{equation}\label{eqn:material}
    	\int _{\Om } \nabla \dot{u}\cdot \nabla \varphi \dx-\tau _1\int _{\Om } \dot{u} \varphi \dx =\int _{\Om } \big(\tau _1'u+\tau _1u\,\mathrm{div}(V)\big)\varphi \dx -\int _{\Om } (A'(0)\nabla u)\cdot \nabla \varphi \dx \quad 
    	\forall \varphi \in \HD{}.
    \end{equation}
    %
    %
   where $A'(0)=\mathrm{div}(V)I-DV-{DV}^T$, $\tau _1'=d\tau _1(\Om ;V)$ the Eulerian derivative of $\tau _1(\Om )$ in the direction of $V$ at $\Om $.
\end{lemma}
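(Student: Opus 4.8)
The plan is to differentiate, at $\eps=0$, the transformed weak formulation \eqref{eqn:material1} satisfied by $v_\eps=u_\eps\circ\Phi_\eps$. The starting observation is that, although \eqref{eqn:material1} is written with test functions of the form $\varphi\circ\Phi_\eps$, for each fixed $\eps$ the identity in fact holds for \emph{every} $\psi\in\HD{}$ independent of $\eps$: as $\varphi$ runs over $\hd{\eps}$ the function $\varphi\circ\Phi_\eps$ runs over all of $\HD{}$, because $\Phi_\eps$ carries $\Ga_D$ onto $\Ga_D^\eps$. Hence I may fix a test function $\varphi\in\HD{}$ independent of $\eps$ and differentiate \eqref{eqn:material1} term by term. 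By Proposition~\ref{mat_der}, the map $\eps\mapsto(v_\eps,\tau_1(\Om_\eps))$ is of class $\C^1$ near $0$, with $v_0=u$ and $\tau_1(\Om_0)=\tau_1$; in particular the material derivative $\dot u=\tfrac{d}{d\eps}v_\eps\big|_{\eps=0}$ exists in $\HD{}$ as the $\HD{}$-limit of elements of the closed subspace $\HD{}$, which settles the first assertion.

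Next I would record the first-order expansions of the coefficients. Since $\Phi_\eps=I+\eps V$ with $V\in W^{3,\infty}$, one has $D\Phi_\eps=I+\eps\,DV$, so $\gamma(\eps)=\det(I+\eps\,DV)=1+\eps\,\mathrm{div}(V)+O(\eps^2)$ (positive for small $\eps$, so the absolute value is immaterial) and $D\Phi_\eps^{-1}=I-\eps\,DV+O(\eps^2)$, with all remainders uniform in $x$. Substituting these into $A(\eps)=\gamma(\eps)\,D\Phi_\eps^{-1}(D\Phi_\eps^{-1})^T$ and collecting the $\eps$-linear term gives $A(0)=I$ and
\[ A'(0)=\mathrm{div}(V)\,I-DV-{DV}^T,\qquad \gamma'(0)=\mathrm{div}(V). \]
These expansions hold in $L^\infty(\Om)$, which is exactly what makes the two $\eps$-dependent bilinear forms in \eqref{eqn:material1} differentiable in $\eps$.

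Differentiating \eqref{eqn:material1} at $\eps=0$ with $\varphi$ fixed, the left-hand side produces $\int_\Om\nabla\dot u\cdot\nabla\varphi\dx+\int_\Om(A'(0)\nabla u)\cdot\nabla\varphi\dx$ (using $A(0)=I$), while the right-hand side produces $\tau_1'\int_\Om u\varphi\dx+\tau_1\int_\Om\mathrm{div}(V)\,u\varphi\dx+\tau_1\int_\Om\dot u\,\varphi\dx$, where $\tau_1'=d\tau_1(\Om;V)$. Equating the two and rearranging yields precisely \eqref{eqn:material}.

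For the characterization I would argue in two steps, and this is where the only real subtlety lies. The bilinear form $a(w,\varphi)\dq\int_\Om\nabla w\cdot\nabla\varphi\dx-\tau_1\int_\Om w\varphi\dx$ has, by the simplicity of $\tau_1$, the one-dimensional kernel $\mathrm{span}\{u\}$; by the Fredholm alternative \eqref{eqn:material} is solvable only if its right-hand side annihilates $u$, and testing \eqref{eqn:material} with $\varphi=u$ (so that $a(\dot u,u)=0$) determines $\tau_1'$ explicitly. The solution $\dot u$ is then unique only up to addition of multiples of $u$; the remaining scalar is fixed by differentiating the normalization $\int_\Om\gamma(\eps)v_\eps^2\dx=1$ at $\eps=0$, which gives the constraint $2\int_\Om u\dot u\dx+\int_\Om\mathrm{div}(V)\,u^2\dx=0$. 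These two conditions single out the material derivative uniquely. The main obstacle is simply the rigorous justification of the term-by-term differentiation of \eqref{eqn:material1}, and this is delivered by the $\C^1$-regularity of Proposition~\ref{mat_der} together with the uniform $L^\infty$-expansions of $A(\eps)$ and $\gamma(\eps)$ recorded above.
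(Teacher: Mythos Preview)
Your approach is essentially the same as the paper's: invoke Proposition~\ref{mat_der} for the existence of $\dot u\in\HD{}$, then differentiate the transported weak formulation \eqref{eqn:material1} at $\eps=0$ using $A'(0)=\mathrm{div}(V)I-DV-DV^T$ and $\gamma'(0)=\mathrm{div}(V)$. Your treatment is in fact more explicit than the paper's on two points: you justify taking the test function $\varphi\in\HD{}$ independent of $\eps$, and you address the uniqueness claim carefully via the Fredholm alternative together with the differentiated normalization constraint, whereas the paper's proof is silent on uniqueness (implicitly relying on the implicit function theorem in Proposition~\ref{mat_der}).
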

\begin{proof}
%
    By Proposition~\ref{mat_der}, the map $\eps \longmapsto u_\eps$ is differentiable, i.e., the material derivative $\dot{u}(\Om ;V)$ exists and is in $\HD{}$. Therefore, by differentiating the equation~\eqref{eqn:material1} with respect to $\eps $ at $\eps =0$, we get that
    
    \noindent
    \begin{equation*}
    	\int _\Om (A'(0)\nabla u)\cdot \nabla \varphi \dx + \int _\Om \nabla \dot{u}\cdot \nabla \varphi \dx - \int _\Om \dot{(\tau _1u)} \varphi \dx -\tau _1\int _\Om u\ga '(0)\varphi \dx =0\quad \forall \varphi \in \HD{},
    \end{equation*}
    with $A'(0)=\mathrm{div}(V)I-DV-{DV}^T$.
    Noting that $\ga '(0)={\rm div}(V)$ and using the product rule for the material derivative, we obtain~\eqref{eqn:material}.
\end{proof}
\begin{remark}
	The material derivative of an eigenfunction $u(\Om )\in \HD{}$ corresponding to the first eigenvalue $\tau _1(\Om )$ of~\eqref{eigen1} in $\Om $, is the unique weak solution of the following boundary value problem:
	
	\noi
	\begin{equation*}
		\left.
		\begin{aligned}
		-\De \dot{u}-\tau _1(\Om ) \dot{u}&=f(u)\text{ in } \Om ,\\
		\dot{u} &=0\text{ on }\Ga _D,\\
		\frac{\pa \dot{u}}{\pa n} &= -(A'(0)\nabla u)\cdot n \text{ on } \Ga _N,
		\end{aligned}
		\right\}
	\end{equation*}
	with $f(u)=(\tau _1'+\tau _1\,\mathrm{div}(V))u+\mathrm{div}(A'(0)\nabla u).$
\end{remark}

\par From the definition of the shape derivative of $u$, we have the following lemma.
\begin{lemma}\label{shape:lemma0}
	Let	$V$ be a vector field as given in~\eqref{trans}. The shape derivative $u'=u'(\Om ;V)$ of the eigenfunction $u(\Om )$ corresponding to the first eigenvalue $\tau _1(\Om )$ of~\eqref{eigen1} in $\Om $, is the unique weak solution of the following boundary value problem:
	
	\noindent
    \begin{equation*}
		\left.
		\begin{aligned}
			-\De u'&=\tau _1(\Om ) u'+ \tau _1' u \text{ in } \Om ,\\
			u' &= -\frac{\pa u}{\pa n}\, (V\cdot n) \text{ on } \Ga _D,\\
			\frac{\pa u'} {\pa n} &= -\big(A'(0)\nabla u+\nabla (\nabla u \cdot V)\big)\cdot n 
			\text{ on } \Ga _N.
	    \end{aligned}
	   \right\}
    \end{equation*}
    %
\end{lemma}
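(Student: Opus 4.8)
The plan is to obtain each of the three defining relations of the claimed boundary value problem directly from the defining identity $u' = \dot u - \nabla u\cdot V$ in \eqref{eq:shape}, feeding in the material-derivative problem established just above (its weak form \eqref{eqn:material} together with the associated strong form, namely $-\De\dot u - \tau _1\dot u = (\tau _1' + \tau _1\,\mathrm{div}(V))u + \mathrm{div}(A'(0)\nabla u)$ in $\Om$, $\dot u = 0$ on $\Ga _D$, and $\frac{\pa\dot u}{\pa n} = -(A'(0)\nabla u)\cdot n$ on $\Ga _N$). Since Proposition~\ref{mat_der} guarantees that $\dot u$ exists and is unique, the function $u' = \dot u - \nabla u\cdot V$ is itself well defined and unique; thus the only genuine content is to identify the PDE and the two boundary conditions that $u'$ satisfies, while uniqueness of the weak solution is inherited from that of $\dot u$.

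For the interior equation I would apply $-\De$ to $u' = \dot u - \nabla u\cdot V$, so that $-\De u' = -\De\dot u + \De(\nabla u\cdot V)$. The first term is replaced using the strong form of the material-derivative problem above. For the second term, a direct differentiation using $-\De u = \tau _1(\Om)u$ gives the commutator identity
\[
-\De(\nabla u\cdot V) = \tau _1\,(\nabla u\cdot V) - 2\,DV\!:\!D^2u - \nabla u\cdot\De V ,
\]
where $DV\!:\!D^2u = \sum_{i,j}\partial_j V_i\,\partial_i\partial_j u$. The heart of the argument is then the purely algebraic identity
\[
\mathrm{div}(A'(0)\nabla u) = -\tau _1\,\mathrm{div}(V)\,u - 2\,DV\!:\!D^2u - \nabla u\cdot\De V ,
\]
obtained by expanding $A'(0) = \mathrm{div}(V)I - DV - DV^T$, differentiating term by term, and again using $\De u = -\tau _1 u$. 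Substituting the last two displays into the expression for $-\De u'$, every contribution except $\tau _1(\dot u - \nabla u\cdot V)$ and $\tau _1' u$ cancels, leaving exactly $-\De u' = \tau _1(\Om)u' + \tau _1' u$. Verifying this cancellation — in particular matching the two occurrences of $DV\!:\!D^2u$, of $\nabla u\cdot\De V$, and the $\tau _1\,\mathrm{div}(V)u$ terms — is where the computation must be carried out carefully, and I expect it to be the main obstacle.

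The boundary conditions are comparatively quick. On $\Ga _D$ we have $\dot u = 0$, so $u' = -\nabla u\cdot V$ there; since $u\equiv 0$ on $\Ga _D$ its tangential derivatives vanish and $\nabla u = \frac{\pa u}{\pa n}\,n$, whence $u' = -\frac{\pa u}{\pa n}\,(V\cdot n)$ on $\Ga _D$. On $\Ga _N$ I would take the normal derivative of $u' = \dot u - \nabla u\cdot V$ to get $\frac{\pa u'}{\pa n} = \frac{\pa\dot u}{\pa n} - \nabla(\nabla u\cdot V)\cdot n$, and then replace $\frac{\pa\dot u}{\pa n}$ by $-(A'(0)\nabla u)\cdot n$ from the material-derivative problem, yielding $\frac{\pa u'}{\pa n} = -\big(A'(0)\nabla u + \nabla(\nabla u\cdot V)\big)\cdot n$. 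This completes the identification, and uniqueness needs no separate argument since it follows from the uniqueness of $\dot u$ furnished by Proposition~\ref{mat_der}.
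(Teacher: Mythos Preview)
Your proposal is correct and follows essentially the same path as the paper: both derive the boundary conditions from $u'=\dot u-\nabla u\cdot V$ exactly as you do, and for the interior equation both reduce to the same algebraic cancellation. The only cosmetic difference is that the paper stays in weak form and packages your two displayed identities as the single statement $\mathrm{div}\big(A'(0)\nabla u+\nabla(\nabla u\cdot V)-\De u\,V\big)=0$, then tests against $\varphi\in\D(\Om)$, whereas you work directly in strong form and verify the cancellation term by term; the underlying computation is identical.
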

\begin{proof}
	Using the definition of $u'=\dot{u}-\nabla u\cdot V$ in $\Om $, we rewrite \eqref{eqn:material} as
    %
    \begin{equation}\label{eqn:material-1}
    	\int _\Om \nabla u'\cdot \nabla \varphi \dx -\tau _1\int _\Om u'\varphi \dx -\tau _1'\int _\Om u\varphi \dx
    	=-\int _\Om \big( \nabla (\nabla u\cdot V)- (A'(0)\nabla u)\big)\cdot \nabla \varphi \dx +\tau _1\int _\Om \mathrm{div}(uV)\varphi \dx.
    \end{equation}
    %
    %
    %
    We claim that
    
    \noindent
    \begin{equation*}\label{shape:eqn1}
        \int _\Om \big(\nabla (\nabla u\cdot V)+ (A'(0)\nabla u) \big) \cdot \nabla \varphi \dx-\tau _1\int _\Om \mathrm{div} (uV)\varphi \dx =0 \quad \forall \varphi \in \D (\Om ).
    \end{equation*}
    For this, we define $f(u,V)\dq A'(0)\nabla u +\nabla (\nabla u \cdot V)-\De u V$ in $\Om $. It is easy to verify that $\mathrm{div}(f(u,V))=0$. 
    Now, for $\varphi \in \D(\Om )$, we have
    \begin{eqnarray*}
    	\int _\Om \big(\nabla (\nabla u\cdot V)\!
    	+\! A'(0)\nabla u\big)\cdot \nabla \varphi \dx
    	&=& \int _\Om \Big(f(u,V)+\De u V\Big)\cdot \nabla \varphi \dx\\
    	&=& -\int _\Om \mathrm{div}(f(u,V))\varphi \dx
    	+\int _{\pa \Om }\varphi f(u,V)\cdot n\dS -\tau _1\int _\Om u V\cdot \nabla \varphi \dx\\
    	&=& -0+0-\tau _1\int _\Om u V\cdot \nabla \varphi \dx\\
    	&=&\tau _1\int _\Om \mathrm{div}(uV)\varphi \dx
    	-\tau _1\int _{\pa \Om }\varphi u V\cdot n \dS
    	=\tau _1\int _\Om \mathrm{div}(uV)\varphi \dx. 
    \end{eqnarray*}
    The last equality follows from the integration by parts.
    Therefore, \eqref{eqn:material-1} gives:
    
    \noindent
    \begin{equation*}
    	\int _\Om \nabla u'\cdot \nabla \varphi \dx -\tau _1\int _\Om u'\varphi \dx -\tau _1'\int _\Om u\varphi \dx=0 \quad \forall \varphi \in \D (\Om ).
    \end{equation*}
    Now, using the integration by parts we get
       
    \noindent
    \begin{equation*}
    	-\int _\Om \De u' \varphi \dx - \tau _1\int _\Om u' \varphi \dx -\tau _1' \int _\Om u \varphi \dx=0 \quad \forall \varphi \in \D (\Om ),
    \end{equation*}
    %
    and hence

    \noindent
    \begin{equation*}
    	-\De u'=\tau _1u' +\tau _1' u \text{ in }\Om .
    \end{equation*}
    Since $u=0$ on $\Ga _D$, we have $\dot{u}=0$ on $\Ga _D$ and also $\nabla u = \displaystyle \frac{\pa u}{\pa n}n$. Now, from the definition of the shape derivative $u'$ we get that
    
    \noindent
    \begin{equation*}
    	u'=\dot{u}-\frac{\pa u}{\pa n}V\cdot n=-\frac{\pa u} {\pa n}V\cdot n \text{ on }\Ga _D.
    \end{equation*}
    On $\Ga _N$
    %
    
    \noindent
    \begin{equation*}
    	\frac{\pa u'}{\pa n}
    	= \frac{\pa \dot{u}}{\pa n}- \nabla (\nabla u\cdot V) \cdot n
    	=-\big(A'(0)\nabla u +\nabla (\nabla u \cdot V)\big)\cdot n.
    \end{equation*}
    This completes the proof.
    %
    %
\end{proof}
\par Next, we compute the Eulerian derivative of the first eigenvalue $\tau _1(\Om )$ of~\eqref{eigen1}.
\begin{theorem}\label{thm:shape}
	Let $V$ be a vector field given in~\eqref{trans}. If $\tau _1(\Om )$ is the first eigenvalue of~\eqref{eigen1} and $u$ is an eigenfunction corresponding to $\tau _1(\Om )$ with the normalization $\int _\Om u^2 \dx=1$, then the Eulerian derivative of $\tau _1(\Om )$ in the direction of $V$ is given by
	\begin{equation}\label{shape formula}
		d\tau(\Om ;V)=\int _{\Ga _N} \left(\md{\nabla u}^2-\tau _1(\Om ) u^2\right)\, (V\cdot n) \dS - \int _{\Ga _D} \left(\frac{\partial u}{\partial n}\right)^2\, (V\cdot n) \dS.
	\end{equation}
\end{theorem}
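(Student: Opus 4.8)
The plan is to differentiate the Rayleigh representation $\tau _1(\Om )=\int _\Om |\nabla u|^2\dx$, subject to the normalization $\int _\Om u^2\dx =1$, along the family $\Om _\eps =\Phi _\eps (\Om )$, and then to convert the resulting volume expression into the boundary (Hadamard) form by means of the eigenvalue equation together with the boundary data of the shape derivative. The two ingredients I would rely on are the existence of the shape derivative $u'=u'(\Om ;V)$, guaranteed by Proposition~\ref{mat_der}, and its boundary behaviour recorded in Lemma~\ref{shape:lemma0}; in fact only the Dirichlet trace $u'=-\frac{\pa u}{\pa n}(V\cdot n)$ on $\Ga _D$ will be needed, together with the Neumann condition $\frac{\pa u}{\pa n}=0$ on $\Ga _N$ satisfied by $u$ itself. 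Notably, the Neumann datum of $u'$ on $\Ga _N$ from Lemma~\ref{shape:lemma0} will not enter, which is what makes the computation short.

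First I would differentiate both the functional and the constraint using the standard rule for differentiating a domain integral (e.g. \cite[Corollary~5.2.3]{Henrot-Book}). Since the shape derivative of $|\nabla u|^2$ is $2\nabla u\cdot \nabla u'$ and that of $u^2$ is $2uu'$, I obtain
\begin{align*}
	\tau _1'(\Om ) &= 2\int _\Om \nabla u\cdot \nabla u'\dx + \int _{\pa \Om } |\nabla u|^2\,(V\cdot n)\dS ,\\
	0 &= 2\int _\Om u u'\dx + \int _{\pa \Om } u^2\,(V\cdot n)\dS .
\end{align*}

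Next I would rewrite the cross term $\int _\Om \nabla u\cdot \nabla u'\dx$ via Green's identity. Because $-\De u=\tau _1 u$ in $\Om $,
\[
\int _\Om \nabla u\cdot \nabla u'\dx = \tau _1\int _\Om u u'\dx + \int _{\pa \Om }\frac{\pa u}{\pa n}u'\dS .
\]
The crucial observation here is that $u'$ does \emph{not} belong to $\HD{}$ — its Dirichlet trace is nonzero — so $u'$ cannot be used as a test function in the weak formulation, and Green's identity with the explicit boundary term is the correct substitute. Since $\frac{\pa u}{\pa n}=0$ on $\Ga _N$, the boundary integral reduces to $\Ga _D$, where the trace $u'=-\frac{\pa u}{\pa n}(V\cdot n)$ gives $\int _{\pa \Om }\frac{\pa u}{\pa n}u'\dS =-\int _{\Ga _D}\big(\frac{\pa u}{\pa n}\big)^2(V\cdot n)\dS$. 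Substituting this into the first displayed equation and then eliminating $\int _\Om uu'\dx$ through the differentiated constraint turns everything into boundary integrals:
\[
\tau _1'(\Om )=\int _{\pa \Om }\big(|\nabla u|^2-\tau _1 u^2\big)(V\cdot n)\dS - 2\int _{\Ga _D}\Big(\frac{\pa u}{\pa n}\Big)^2(V\cdot n)\dS .
\]
Finally I would split $\pa \Om =\Ga _D\cup \Ga _N$ and use that $u=0$ on $\Ga _D$ forces $\nabla u=\frac{\pa u}{\pa n}n$ there, whence $|\nabla u|^2-\tau _1 u^2=\big(\frac{\pa u}{\pa n}\big)^2$ on $\Ga _D$; the two $\Ga _D$ contributions then combine to $-\int _{\Ga _D}\big(\frac{\pa u}{\pa n}\big)^2(V\cdot n)\dS$, yielding exactly~\eqref{shape formula}.

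The genuine content is the volume-to-boundary conversion, which is accomplished cleanly because the eigenvalue equation lets us trade $-\De u$ for $\tau _1 u$. The point I would be most careful about is the regularity required to legitimize Green's identity and the boundary integrals: one needs $u\in \C^2(\Om )\cap \C^1(\overline{\Om })$ and a well-defined trace for $u'$ on $\pa \Om $, which is precisely what Proposition~\ref{mat_der} and Lemma~\ref{shape:lemma0} supply. The secondary subtlety, already flagged above, is the non-vanishing Dirichlet trace of $u'$, which both rules out the naive test-function computation and is responsible for the surviving $\Ga _D$ boundary term; keeping this term through the two uses of Green's identity is the step where sign errors are easiest to make.
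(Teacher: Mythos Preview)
Your proposal is correct and follows essentially the same route as the paper's own proof: differentiate $\tau_1=\int_\Om |\nabla u|^2\dx$ and the constraint $\int_\Om u^2\dx=1$ via the domain-differentiation rule, convert the cross term $\int_\Om \nabla u\cdot\nabla u'\dx$ using Green's identity together with $-\De u=\tau_1 u$ and the trace $u'=-\frac{\pa u}{\pa n}(V\cdot n)$ on $\Ga_D$, and then combine. The only cosmetic difference is that the paper simplifies the boundary integrals slightly earlier (writing $\int_{\Ga_N}u^2(V\cdot n)\dS$ and splitting $\int_{\pa\Om}|\nabla u|^2(V\cdot n)\dS$ before substituting), whereas you carry $\int_{\pa\Om}$ throughout and do the $\Ga_D$/$\Ga_N$ split in the last line; the computations are otherwise identical.
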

\begin{proof}
	Firstly, we recall (from \cite[Theorem 3.3]{Simon80}) that, if a shape functional $J(\Om )$ is given by $J(\Om )=\int _\Om C(u(\Om ))\dx$ under the assumptions that:  the operator $C:W^{1,p}(\Om )\longrightarrow L^1(\Om )$ is differentiable, and both the functions $u(\Om )\in W^{1,p}(\Om )$ and $C(u(\Om ))\in W^{1,1}(\Om )$ have the material derivative in the direction of $V$, then the Eulerian derivative of $J$ in the direction of $V$ exists and is given by 
	\begin{equation}\label{shape derivative}
	\mathrm{d}J(\Om ;V)=\int _\Om \frac{\pa C}{\pa u}(u')\dx + \int _{\pa \Om }C(u)(V\cdot n)\dS.
	\end{equation}
	Let $u$ be an eigenfunction corresponding to $\tau _1(\Om )$ with the normalization $\int _\Om u^2 \dx=1$. From Proposition~\ref{mat_der}, the material derivatives of $u(\Om )$ and $C(u(\Om ))$ exist, and therefore the Eulerian derivative of the shape functionals $\int _\Om |\nabla u|^2 \dx $ and $\int _\Om u^2 \dx $ exist. 
	By taking shape derivative of $\int _{\Om }u^2\dx =1$ in the direction of $V$ on both sides, and using \eqref{shape derivative} together with the boundary conditions yield
	\begin{equation}\label{shape:l2}
        2 \int _{\Om } u u'\dx +\int _{\Ga _N} u^2 \, (V\cdot n) \dS =0.
    \end{equation}
    Notice that, $\tau _1(\Om )=\int _\Om |\nabla u|^2 \dx $. Thus, again by~\eqref{shape derivative}, we get
    
    \noindent
    \begin{equation*}
    	d\tau _1(\Om ;V)
    	=2 \int _{\Om }\nabla u\cdot \nabla u'\dx
    	+ \int _{\pa \Om}\md{\nabla u}^2\, (V\cdot n) \dS.
    \end{equation*}
    Since $u$ is constant on $\Ga_D$, we have
    $\nabla u =\displaystyle \frac{\pa u}{\pa n}n$
    and
    $\md{\nabla u}= \md{\displaystyle \frac{\partial u}{\partial n}}$
    on $\Ga _D$. Therefore 
    \begin{equation}\label{shape_der1}
    	d\tau _1(\Om ;V)
    	=2 \int _{\Om}\nabla u\cdot \nabla u'\dx
    	+\int_{\Ga _D} \left(\frac{\pa u}{\pa n}\right)^2\, (V\cdot n) \dS +\int _{\Ga _N}\md{\nabla u}^2\, (V\cdot n) \dS.
  \end{equation}
    By multiplying the equation $-\De u=\tau _1(\Om )u$ with $u'$ and then using the integration by parts we obtain 
    \begin{equation}\label{shape_der2}
    	\int _{\Om } \nabla u\cdot \nabla u'\dx- \int _{\Ga _D} \frac{\pa u}{\pa n} u^{\prime } \dS = \tau _1(\Om ) \int _{\Om }u u'\dx.
    \end{equation}
    By combining~\eqref{shape:l2}-\eqref{shape_der2}, we get:
    %
%
    
    \noindent
    \begin{equation*}
    	d\tau _1(\Om ;V)= \int _{\Ga _N} \left(\md{\nabla u}^2-\tau _1(\Om ) u^2\right)\, (V\cdot n) \dS - \int _{\Ga _D} \left(\frac{\pa u}{\pa n}\right)^2\, (V\cdot n) \dS. \qedhere
    \end{equation*}
\end{proof}
For a similar shape derivative formula for the Dirichlet eigenvalue, we refer to \cite{Anoop18, Zivari08}.
\section{Maximum principle at corners}\label{MPatCorners}
Next we state the a version of Hopf's lemma that can be used to to determine the tangential derivative of superharmonic function  at the corners. The same result can be be obtained from from
\cite[Lemma A.2]{Pedro-Tobis} for the annular domains. We are giving a simpler proof for the same.  
Recall that, $\H_0$ is the set of all closed half spaces (polarizers) of $\RN$ and $\H ^* = \{H\in \H_0 : -e_1\in H\}$. 
%
Let $H\in \H^*$ be a fixed polarizer, then any set $A\subseteq \RN,$ we denote $A \cap H$ by $A^+$ and $A\cap H\cm $ by $A^-$. 
\begin{proposition}\label{propo:Hopf}
	Let $\Om =B_{R}(0)\setminus \overline{B_r (s e_1)}$ be an annular domain, for $0<r<R<\infty $ and $0\leq s<R-r$. Let $H\in \H ^*$ and let $\Om ^\pm $ be given as above. 
	Let
	$w\in \C^2 (\Om ^-)\cap \C^1 (\overline{\Om ^-})$ be a solution of the following problem:
	
	\noindent
	\begin{equation*}
		\left .
		\begin{aligned}
			-\De w &\geq 0 \mbox{ in } \Om ^-,\\
			w& \geq 0 \mbox{ on } (\overline{\Om }\cap \pa H)\cup(\pa B_{r}(a))^-,\\
			\frac{\pa w}{\pa n} &\geq  0  \mbox{ on } (\pa B_{R}(0))^- ,
		\end{aligned}
		\right \}
	\end{equation*}
	where~$n$ is the unit outward normal to $\Om ^-$. If $w\not \equiv 0$ in $\Om ^{-}$, then $w>0$ on  $\Om ^-\cup (\pa B_{R}(0))^-$ and for an outward normal $h$ to $H$ we have 
	
	\noindent
	\begin{equation*}
		\frac{\pa w}{\pa h } (x_0)>0 \text{ for } x_0\in \pa B_R(0) \cap \pa H \text{ with } w(x_0)=0.
	\end{equation*}
\end{proposition}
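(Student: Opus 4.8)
The plan is to prove the two assertions separately: first the strict positivity of $w$, then the sign of the tangential derivative at the corner $x_0$, the latter by an explicit barrier tailored to the Dirichlet--Neumann corner. For the positivity, I would first obtain $w\geq 0$ in $\Om^-$ from the weak maximum principle for mixed boundary conditions: apply Proposition~\ref{aux:wcp} with ambient domain $\Om$, subdomain $\Om'=\Om^-$, Neumann part $\Ga_N'=(\pa B_R(0))^-\subseteq\pa B_R(0)$, Dirichlet part $\Ga_D'=(\overline\Om\cap\pa H)\cup(\pa B_r(se_1))^-$, and $\mu=0<\tau_1(\Om)$; the hypotheses $-\De w\geq 0$, $w\geq 0$ on $\Ga_D'$ and $\pa w/\pa n\geq 0$ on $\Ga_N'$ are exactly what is assumed. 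Since $w\not\equiv 0$, the strong maximum principle (Proposition~\ref{SMP}(a)) upgrades this to $w>0$ in $\Om^-$. On the open outer face $(\pa B_R(0))^-$ every point is a smooth boundary point satisfying the interior sphere condition, so a zero of $w$ there would be an interior minimum and Hopf's lemma (Proposition~\ref{SMP}(b)) would force $\pa w/\pa n<0$, contradicting the hypothesis; hence $w>0$ on $(\pa B_R(0))^-$ as well.

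The corner assertion rests on the geometric fact that $0\in\pa H$ together with $x_0\in\pa B_R(0)\cap\pa H$ gives $h\cdot x_0=0$, so $h$ is tangent to the sphere at $x_0$ and orthogonal to the outward normal $n_0=x_0/R$; locally $\Om^-$ is the convex right-angle region $\{|x|<R\}\cap\{h\cdot x>0\}$, with the flat Dirichlet face in $\pa H$ and the curved Neumann face in $\pa B_R(0)$. I would build a barrier $z(x)=(h\cdot x)\,f(|x|)$ whose radial profile $f$ is chosen so that $z=0$ on $\pa H$ (automatic), $\pa z/\pa n=0$ on $\pa B_R(0)$ (which forces $f'(R)=-f(R)/R$), $\De z=(h\cdot x)\big(f''+\frac{d+1}{r}f'\big)\geq 0$ in $\Om^-$ near the sphere (arranged by taking $f''(R)$ large enough), and $f(R)>0$; an elementary $f$ meets all four requirements. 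Then $\nabla z(x_0)=f(R)\,h$, so in particular $\nabla z(x_0)\cdot h=f(R)>0$.

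On $D=\Om^-\cap B_\rho(x_0)$ for small $\rho$ I would compare $\phi=w-\eps z$: it is superharmonic, $-\De\phi\geq 0$, satisfies $\pa\phi/\pa n\geq 0$ on the sphere part of $\pa D$ and $\phi=w\geq 0$ on $\pa H\cap B_\rho(x_0)$, so Proposition~\ref{aux:wcp} with $\mu=0$ yields $\phi\geq 0$, i.e. $w\geq\eps z$ in $D$, once $\phi\geq 0$ is also known on the artificial cut $C_\rho=\Om^-\cap\pa B_\rho(x_0)$. Granting $w\geq\eps z$, I pick the admissible inward direction $q=h-\delta n_0$ with $\delta>0$ small, for which $x_0+tq\in\Om^-$ for small $t>0$ since $h\cdot(x_0+tq)=t>0$ and $|x_0+tq|^2=R^2-2\delta R t+O(t^2)<R^2$. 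Dividing $w(x_0+tq)\geq\eps z(x_0+tq)$ by $t$ and letting $t\downarrow 0$ gives $\nabla w(x_0)\cdot q\geq\eps\,\nabla z(x_0)\cdot q=\eps f(R)$, whence $\nabla w(x_0)\cdot h=\nabla w(x_0)\cdot q+\delta\,\frac{\pa w}{\pa n}(x_0)\geq\eps f(R)>0$, using the Neumann sign $\frac{\pa w}{\pa n}(x_0)\geq 0$ at $x_0$.

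The hard part is the one remaining input, $\phi\geq 0$ on the interior cut $C_\rho$. Away from $\pa H$ this is immediate from the strict positivity $w>0$ of the first part, for $\eps$ small; near where $C_\rho$ approaches $\pa H$ both $w$ and $z$ vanish, and I would match their rates by noting $z\sim(h\cdot x)$ and invoking Hopf's lemma on the smooth flat face $\pa H$ to produce a linear lower bound $w\geq c_\rho\,(h\cdot x)$ there, so that $w\geq\eps z$ holds on all of $C_\rho$ for $\eps$ small. This reconciliation of the Dirichlet face, the Neumann face, and the cut at the vertex $x_0$ is exactly what makes the corner delicate: the interior sphere condition fails at the convex corner, so Hopf's lemma cannot be applied directly, and the barrier $z$ is engineered precisely to circumvent this obstruction.
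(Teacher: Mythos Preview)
Your barrier strategy is sound and the positivity argument is correct. The gap is exactly where you flag the ``hard part'': establishing $w\ge\eps z$ on the cut $C_\rho=\Om^-\cap\pa B_\rho(x_0)$. Your remedy---apply Hopf's lemma on the flat face $\pa H$ to get $w\ge c_\rho\,(h\cdot x)$---works only where $\overline{C_\rho}\cap\pa H$ consists of \emph{regular} boundary points of $\Om^-$. For $d\ge 3$, however, $\overline{C_\rho}$ meets the corner set $\pa H\cap\pa B_R(0)$ (a $(d-2)$-sphere through $x_0$) in a whole $(d-3)$-sphere of points $y\neq x_0$. At each such $y$ the interior sphere condition fails, Hopf is unavailable, and the linear lower bound $w\ge c\,(h\cdot x)$ near $y$ is precisely the corner estimate you are trying to prove---so the argument becomes circular. (For $d=2$ your reasoning does go through, since then $\pa H\cap\pa B_R(0)=\{\pm x_0\}$ and $\overline{C_\rho}$ never sees $-x_0$.)

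The paper avoids this by choosing the comparison region so that its closure meets $\pa H$ only at the single point $x_0$. After normalizing $R=1$, $x_0=e_1$, $h=e_d$, it sets $\bar x=e_1+e_d$ and uses the ball $B_1(\bar x)$, which is tangent to $\pa H$ at $x_0$ and lies entirely in $H\cm$; the barrier $\varphi(x)=\big|\,x/|x|^2-\bar x\,\big|^{-\al}-1$ vanishes on $\pa B_1(\bar x)$, has $\pa\varphi/\pa n\le 0$ on the outer sphere, and satisfies $\De\varphi>0$ near $x_0$ for $\al>d-2$. The cut $\pa B_\de(x_0)\cap B_1(\bar x)\cap\Om^-$ then has compact closure inside $\Om^-\cup(\pa B_R(0))^-$, where $w>0$ strictly, so $\eps$ exists by pure compactness---no rate-matching on $\pa H$ is ever needed. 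Incidentally, your own barrier $z=(h\cdot x)f(|x|)$ can be rescued by localizing radially rather than in a ball: on the shell $\Om^-\cap\{R-\rho<|x|\}$ the new cut $\{|x|=R-\rho\}\cap\Om^-$ meets $\pa H$ only at interior points of the annulus, where the standard Hopf lemma applies uniformly by compactness of $\{|x|=R-\rho\}\cap\pa H$.
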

%
\begin{proof}
	Since $w\not \equiv 0$ in $\Om ^-$, by the strong maximum principle, $w>0$ in $\Om ^-.$ Since $\frac{\pa w}{\pa n}\geq 0$, by Hopf's lemma, the minimum value of $w$ can not be attained on $(\pa B_R(0))^-$, and therefore $w>0$ on $(\pa B_R(0))^-$. Hence $w>0$ on $\Om ^-\cup (\pa B_{R}(0))^-.$ 
	Let $h$ be an outward normal to $H$. It remains to show that $\frac{\pa w}{\pa h}(x_0)>0$ at $x_0\in \pa B_R(0) \cap \pa H$ with $w(x_0)=0.$ 
	\medskip \\
	\noindent
	\emph{For $x_0\in \pa B_R(0) \cap \pa H$ with $w(x_0)=0$:}
	By an appropriate rotation of the coordinate frame, we can assume that the hyperplane $\pa H$ is given by $\{x\in \RN : x_d=0\}$ and $h=e_d$. Now, we scale the domain $\Om $ so that $R=1$. By the symmetry of $\pa \Om \cap \pa H$, it is enough to choose the corner point  $x_0$ to be $e_1$. Let $\overline{x}=e_1+e_d=(1,0,\dots ,0,1)$. 
	We consider the function $\varphi : \RN \longrightarrow \R$ defined by
	
	\noindent
	\begin{equation*}
		\varphi (x)=
		\left|\frac{x}{|x|^2}-\overline{x}\right|^{-\al }-1
		,
	\end{equation*}
	where $\al >0$ will be fixed later. Then $\varphi \equiv 0$ on $\pa B_1(\overline{x})$ and 
	$\varphi (e_1)=0.$ By direct computation, we obtain
	
	\noindent
	\begin{equation*}
		\frac{\pa \varphi }{\pa x_i}(x)=-\al \left|\frac{x}{|x|^2}-\overline{x} \right|^{-\al -2}\sum\limits_{j=1}^{d}\left(\frac{x_j}{|x|^2}-\overline{x}_j\right)\left( \frac{\de _{ij}}{|x|^2}-\frac{2x_i x_j}{|x|^4} \right) \mbox{ for }i=1,2,\dots ,d.
	\end{equation*}
	Therefore, we get
	
	\noindent
	\begin{equation*}
		\frac{\pa \varphi }{\pa x_d}(e_1)=\al >0.
	\end{equation*}
	Again, by direct computation
	
	\noindent
	\begin{equation*}
		\frac{\pa \varphi }{\pa x_i^2}(e_1)=\left. \al (\al +2)(\de _{id})^2-4\al \de _{id}x_i -\al \sum \limits_{j=1}^{d}\left(\de _{ij}-2x_i x_j \right)^2\right|_{x=e_1} \mbox{ for }i=1,2,\dots ,d,
	\end{equation*}
	and hence
	
	\noindent
	\begin{equation*}
		\De \varphi (e_1)
		=\al (\al +2)\sum\limits _{i=1}^d (\de_{id})^2-4\al \sum\limits_{i=1}^d\de_{id} x_i-\al \sum\limits_{i,j=1}^d(\de _{ij}-2x_i x_j)^2=\al (\al +2-d)>0,
	\end{equation*}
	for $\al >d-2.$
	Note that, for $x\in \pa B_1(0)$ we have $n(x)=x$ and hence
	
	\noindent
	\begin{equation*}
		\frac{\pa \varphi }{\pa n}(x) = \al  |x-\overline{x}|^{-\al -2}(1- x\cdot \overline{x}) \mbox{ for }x\in \pa B_1(0). 
	\end{equation*}
	For $x\in \overline{B_1 (\overline{x})}\cap \pa B_1(0)$, we have
	$1 \geq |x-\overline{x}|^2
	=|x|^2-2x\cdot \overline{x} + |\overline{x}|^2=3-2x\cdot \overline{x}.$
	Hence
	
	\noindent
	\begin{equation*}
		\frac{\pa \varphi }{\pa n}(x)\leq 0  \mbox{ for } x\in \overline{B_1 (\overline{x})}\cap \pa B_1(0).
	\end{equation*}
	Let $\de >0$ be small enough such that $\overline{B_\de (e_1)}\cap \pa B_r(a)=\emptyset $ and 
	
	\noindent
	\begin{equation*}
		\De \varphi >0 \mbox{ in } B_\de (e_1). 
	\end{equation*}
	Now, we consider the set $D\dq B_\de (e_1)\cap B_1(\overline{x})\cap \Om ^-$. Since, $w>0$ on $\pa B_\de (e_1)\cap B_1(\overline{x})\cap \Om ^-$, there exists $\eps >0$ such that
	
	\noindent
	\begin{equation*}
		\inf \{ w (x) :x\in \pa B_\de (e_1)\cap B_1(\overline{x})\cap \Om ^- \}\geq \eps \sup \{\varphi (x): x\in \pa B_\de (e_1)\cap B_1(\overline{x})\cap \Om ^-\}, 
	\end{equation*}
	and hence $w\geq \eps \varphi$ on $\pa B_\de (e_1)\cap B_1(\overline{x})\cap \Om ^-$. The function $\wide{w}:\overline{D}\longrightarrow \R$ defined by $\wide{w}=w-\eps \varphi $ satisfies
	
	\noindent
	\begin{equation*}
		\left .
		\begin{aligned}
			-\De \wide{w} &> 0 \mbox{ in } D,\\
			\wide{w}\geq 0  \mbox{ on } \pa B_\de (e_1)\cap B_1(\overline{x})\cap \Om ^-, &\;\; \wide{w} > 0 \mbox{ on } B_\de (e_1)\cap \pa B_1(\overline{x})\cap \Om ^-,\\
			\frac{\pa \wide{w}}{\pa n} &\geq 0  \mbox{ on } B_\de (e_1)\cap B_1(\overline{x})\cap \pa \Om ^-.
		\end{aligned}
		\right \}
	\end{equation*}
	By the similar arguments as above, $\wide{w}\geq 0$ in $D$, and therefore by the strong maximum principle $\wide{w} >0\mbox{ in } D$. For any $\nu $ such that $e_1+t\nu \in D$ for small $t$, we have $\frac{\pa \wide{w}}{\pa \nu }(e_1)\geq 0$, since $\wide{w}(e_1)=0.$ 
	Then, by smoothness of $\wide{w}$, we get
	
	\noindent
	\begin{equation*}
		\frac{\pa \wide{w}}{\pa x_d}(e_1) 
		\geq 0.
	\end{equation*}
	Hence
	
	\noindent
	\begin{equation*}
		\frac{\pa w}{\pa x_d}(e_1)\geq \eps \frac{\pa \varphi }{\pa x_d}(e_1)>0. \qedhere 
	\end{equation*}
\end{proof}
We would like to stress that the above version of Hopf's lemma gives the sign of G\^{a}teau derivative along the tangential directions at point on the boundary where boundary intersect transversally.
	%
	
\end{document}